\documentclass[11pt, reqno, psamsfonts]{amsart}
\pdfoutput=1

\usepackage{amssymb}
\usepackage{amsthm}
\usepackage{dsfont}
\usepackage{amsmath}
\usepackage{latexsym}
\usepackage[T1]{fontenc}
\usepackage[utf8]{inputenc}
\usepackage[russian, french, english]{babel}

\usepackage{graphicx}
\usepackage{wrapfig}
\usepackage[justification=centering, labelfont=bf]{caption}
\usepackage{mathtools}
\usepackage{tikz}
\usepackage{amsbsy}
\usepackage[inline]{enumitem}
\usepackage{mathrsfs}
\usetikzlibrary{shapes,snakes}
\usetikzlibrary{arrows.meta}
\usetikzlibrary{decorations.pathmorphing}
\usetikzlibrary{patterns}
\usepackage{float}
\usepackage{array}
\usepackage{multicol}
\usepackage{stmaryrd}
\usepackage{cancel} 
\usepackage{lmodern}
\usepackage{upgreek}
\usepackage{titlesec}
\usepackage[spacing=true,kerning=true,babel=true,tracking=true]{microtype}
\usepackage[shortcuts]{extdash}
\usepackage[foot]{amsaddr}
\usepackage[left=1in,right=1in,top=1in,bottom=1in,bindingoffset=0cm]{geometry}
\usepackage{bm}
\usepackage{centernot}
\usepackage{mdframed}
\usepackage[hidelinks]{hyperref}
\usepackage{wasysym}

\usepackage[backend=biber, style=alphabetic, sorting=nyt, maxnames=100,backref=true]{biblatex}
\addbibresource{references.bib}

\title{Coloring graphs with forbidden bipartite subgraphs}
\date{}
\author{\lsstyle James~Anderson}
\email{janderson338@math.gatech.edu}
\author{\lsstyle Anton~Bernshteyn}
\email{bahtoh@gatech.edu}
\author{\lsstyle Abhishek~Dhawan}
\email{abhishek.dhawan@math.gatech.edu}
\address{\textls{\normalfont{}School of Mathematics, Georgia Institute of Technology, Atlanta, GA, USA}}
\thanks{Research of the second named author was partially supported by the NSF grant DMS-2045412.}

\newtheoremstyle{bfnote}%
{}{}%
{\slshape}{}%
{\bfseries}{\bfseries.}%
{ }%
{\thmname{#1}\thmnumber{ #2}\thmnote{ \ep{\normalfont{}#3}}}

\theoremstyle{bfnote}
\newtheorem{theo}[equation]{Theorem}
\newtheorem*{theo*}{Theorem}
\newtheorem{prop}[equation]{Proposition}
\newtheorem{Lemma}[equation]{Lemma}
\newtheorem{claim}[equation]{Claim}
\newtheorem{corl}[equation]{Corollary}
\newtheorem{conj}[equation]{Conjecture}

\newtheorem*{corl*}{Corollary}

\newcounter{ForClaims}[section]
\newtheorem{subclaim}{Subclaim}[ForClaims]

\theoremstyle{definition}
\newtheorem{defn}[equation]{Definition}
\newtheorem*{defn*}{Definition}

\newtheorem*{exmp*}{Example}

\theoremstyle{remark}
\newtheorem*{ques*}{Question}
\newtheorem*{remk*}{Remark}

\newcommand*{\myproofname}{Proof}
\newenvironment{claimproof}[1][\myproofname]{\begin{proof}[#1]}{\end{proof}}

\newcommand{\0}{\emptyset}
\newcommand{\set}[1]{\{#1\}}
\newcommand{\N}{{\mathbb{N}}}

\renewcommand{\P}{\mathbb{P}}
\newcommand{\E}{\mathbb{E}}
\renewcommand{\epsilon}{\varepsilon}
\newcommand{\eps}{\epsilon}
\renewcommand{\phi}{\varphi}
\renewcommand{\theta}{\vartheta}
\renewcommand{\leq}{\leqslant}
\renewcommand{\geq}{\geqslant}
\newcommand{\defeq}{\coloneqq}
\newcommand{\im}{\mathrm{im}}
\newcommand{\bemph}[1]{{\normalfont#1}} 
\newcommand{\ep}[1]{\bemph{(}#1\bemph{)}} 

\newcommand{\emphdef}[1]{\textbf{\textit{{#1}}}}
\newcommand{\pto}{\dashrightarrow}
\newcommand{\emphd}[1]{\emphdef{#1}}
\newcommand{\keep}{\mathsf{keep}}
\newcommand{\uncolor}{\mathsf{uncolor}}
\newcommand{\blank}{\mathsf{blank}}
\newcommand{\col}{\mathsf{col}}
\newcommand{\LLL}{\text{Lov\'asz Local Lemma}}
\numberwithin{equation}{section}
\newcommand{\dom}{\mathrm{dom}}
\newcommand{\Bad}{\mathsf{Bad}}
\newcommand{\Sad}{\mathsf{Sad}}
\newcommand{\Happy}{\mathsf{Happy}}
\newcommand{\Good}{\mathsf{Good}}



\titleformat{\section}[block]{\scshape}{\thesection.}{1ex}{}
\titleformat{\subsection}[block]{\bfseries}{\thesubsection.}{1ex}{}
\titleformat{\subsubsection}[runin]{\itshape}{\bfseries\upshape\thesubsubsection.}{1ex}{}[.---]

\titlespacing*{\section}{0pt}{*3}{*1}
\titlespacing*{\subsection}{0pt}{*3}{*1}
\titlespacing*{\subsubsection}{0pt}{*1.5}{*0}

\renewbibmacro{in:}{}

\renewbibmacro*{volume+number+eid}{%
	\printfield{volume}%
	\setunit*{\addnbspace}
	\printfield{number}%
	\setunit{\addcomma\space}%
	\printfield{eid}}

\DeclareFieldFormat[article]{volume}{\textbf{#1}\space}
\DeclareFieldFormat[article]{number}{\mkbibparens{#1}}

\DeclareFieldFormat{journaltitle}{#1,}
\DeclareFieldFormat[thesis]{title}{\mkbibemph{#1}\addperiod}
\DeclareFieldFormat[article, unpublished, thesis]{title}{\mkbibemph{#1},}
\DeclareFieldFormat[book]{title}{\mkbibemph{#1}\addperiod}
\DeclareFieldFormat[unpublished]{howpublished}{#1, }

\DeclareFieldFormat{pages}{#1}

\DeclareFieldFormat[article]{series}{Ser.~#1\addcomma}

\setlength{\footskip}{1.5\baselineskip}

\setlist{topsep=3pt,itemsep=3pt}

\pagestyle{plain}

\begin{document}

\vspace*{0pt}

\maketitle
\begin{abstract}
     A conjecture of Alon, Krivelevich, and Sudakov states that, for any graph $F$, there is a constant $c_F > 0$ such that if $G$ is an $F$-free graph of maximum degree $\Delta$, then $\chi(G) \leq c_F \Delta / \log\Delta$. Alon, Krivelevich, and Sudakov verified this conjecture for a class of graphs $F$ that includes all bipartite graphs. Moreover, it follows from recent work by Davies, Kang, Pirot, and Sereni that 
     if $G$ is $K_{t,t}$-free, then $\chi(G) \leq (t + o(1)) \Delta / \log\Delta$ as $\Delta \to \infty$. We improve this bound to $(1+o(1)) \Delta/\log \Delta$, making the constant factor independent of $t$. We further extend our result to the DP-coloring setting (also known as correspondence coloring), introduced by Dvo\v{r}\'ak and Postle. 
    %
\end{abstract}

\noindent

\section{Introduction}

    All graphs in this paper are finite, undirected, and simple. The starting point of our investigation is the following celebrated conjecture of Alon, Krivelevich, and Sudakov:
    
    \begin{conj}[{Alon--Krivelevich--Sudakov \cite[Conjecture 3.1]{AKSConjecture}}]\label{conj:AKS}
        For every graph $F$, there is a constant $c_F > 0$ such that if $G$ is an $F$-free graph of maximum degree $\Delta \geq 2$, then $\chi(G) \leq c_F\Delta/\log \Delta$. 
    \end{conj}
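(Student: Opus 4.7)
The plan is to attack Conjecture \ref{conj:AKS} by the semi-random (``nibble'') method, which is the standard vehicle for $\chi = O(\Delta/\log \Delta)$ bounds in locally sparse settings. The starting observation is that $F$-freeness forces a form of local sparsity: every neighborhood $N(v)$ in $G$ must avoid structures that, together with $v$, would yield a copy of $F$. When $F$ is bipartite this is directly an edge-count restriction on $G[N(v)]$, via a K\H{o}v\'ari--S\'os--Tur\'an-type bound, which is exactly the hypothesis that Johansson's theorem and its descendants (Molloy, Bernshteyn, Davies--Kang--Pirot--Sereni, etc.) exploit to gain the $\log \Delta$ factor. For non-bipartite $F$ the structural control is genuinely weaker, and this is where the difficulty lies.

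I would first reduce to the case $F = K_{s,t}$ when $F$ is bipartite, since every bipartite graph embeds in some complete bipartite graph; the general case can then potentially be bootstrapped through a blow-up/Ramsey argument using $\chi(F)$. Modelling on the Alon--Krivelevich--Sudakov framework, the main loop would be: at each round, on the still-uncolored vertices sample a uniformly random partial coloring with a small activation probability $p$; uncolor any vertex whose tentative color conflicts with a neighbor or threatens to concentrate too many same-colored vertices inside some neighborhood; pass to the updated list-coloring instance with shrunken palettes. Using $F$-freeness one bounds the expected number of same-colored neighbors a vertex retains by a factor asymptotically smaller than in the Kahn regime, so the ratio of degree to list size contracts by a factor $(1-\Omega(1/\log\Delta))$ per round. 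A Lov\'asz Local Lemma argument enforces concentration, and iterating for $\Theta(\log\Delta)$ rounds brings this ratio down to $O(1/\log\Delta)$, after which a greedy finish completes the coloring.

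The hard part, and the reason the conjecture remains open in full generality, is the step from bipartite $F$ to arbitrary $F$. When $F$ contains an odd cycle, neighborhoods are no longer forced to be sparse in any edge-count sense --- an $F$-free graph can have a clique of size $|V(F)|-1$ inside a neighborhood --- and the per-round entropy gain that drives the nibble vanishes. Circumventing this appears to demand either a new structural theorem producing a weaker form of local sparsity for $F$-free graphs with $\chi(F)\ge 3$, or a coloring scheme that does not rely on neighborhood sparsity at all. My plan would therefore yield a clean proof for bipartite $F$, recovering the Alon--Krivelevich--Sudakov verification and its quantitative refinements (such as the $K_{t,t}$ bound that this paper sharpens), and it would isolate the non-bipartite case as the genuine remaining difficulty.
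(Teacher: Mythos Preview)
This statement is an open conjecture; the paper does not prove it. What the paper proves is the bipartite case (Theorem~\ref{theo:col} and its DP-coloring strengthening Theorem~\ref{mainTheorem}), with the sharp leading constant $1+o(1)$. Your proposal correctly lands on this division: the nibble argument goes through for bipartite $F$, and the non-bipartite case remains the genuine obstruction for exactly the reason you give --- forbidding an $F$ with $\chi(F)\ge 3$ does not force neighborhoods to be sparse in any edge-count sense.

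For the bipartite case your outline coincides with the paper's strategy at the level you wrote it: reduce to $K_{s,t}$, run an iterated wasteful coloring procedure with small activation probability, track the degree-to-list ratio as it contracts by a $(1-\Theta(1/\log\Delta))$ factor per round, and finish once it drops below a constant (Proposition~\ref{finalBlow}). One inaccuracy worth flagging: the Lov\'asz Local Lemma does not ``enforce concentration.'' Concentration of the per-round list sizes and cover-graph degrees is obtained via Talagrand-type inequalities; the Local Lemma only glues these local high-probability events together across all vertices and colors.

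The substantive technical point your sketch does not anticipate is \emph{where} the K\H{o}v\'ari--S\'os--Tur\'an bound actually enters. It is not used to sparsify $G[N(v)]$ and feed into a Johansson/Molloy-style entropy argument, as your outline suggests. Rather, the only place $K_{s,t}$-freeness is invoked is in the concentration proof for the cover-graph degree (Lemma~\ref{degreeConcentration}): second neighborhoods of a color $c$ can overlap heavily, which obstructs a direct Talagrand application, and the K\H{o}v\'ari--S\'os--Tur\'an theorem is used to show that only $d^{1-\Omega(1/t)}$ neighbors of $c$ are ``sad'' in the sense of having many high-overlap second neighbors. A random partition of the remaining ``happy'' neighbors then sets up an application of Exceptional Talagrand. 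This mechanism is what brings the constant from the $t+o(1)$ of Davies--Kang--Pirot--Sereni down to $1+o(1)$; a plan that routes the sparsity through neighborhood edge counts would recover the former but not the latter.
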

    
    Here we say that $G$ is \emphdef{$F$-free} if $G$ has no subgraph \ep{not necessarily induced} isomorphic to $F$. As long as $F$ contains a cycle, the bound in Conjecture~\ref{conj:AKS} is best possible up to the value of $c_F$, since there exist $\Delta$-regular graphs $G$ of arbitrarily high girth with $\chi(G) \geq (1/2)\Delta/\log \Delta$ \cite{BollobasIndependence}. On the other hand, the best known general upper bound is $\chi(G) \leq c_F \Delta \log \log \Delta/\log \Delta$ due to Johansson \cite{Joh_sparse} \ep{see also \cite{Molloy}}, which exceeds the conjectured value by a $\log \log \Delta$ factor.
    
    Nevertheless, there are some graphs $F$ for which Conjecture~\ref{conj:AKS} has been verified. Among the earliest results along these lines is the theorem of Kim \cite{Kim95} that if $G$ has girth at least $5$ \ep{that is, $G$ is $\set{K_3, C_4}$-free}, then $\chi(G) \leq (1 + o(1))\Delta/\log \Delta$. \ep{Here and in what follows $o(1)$ indicates a function of $\Delta$ that approaches $0$ as $\Delta \to \infty$.} Johansson \cite{Joh_triangle} proved Conjecture \ref{conj:AKS} for $F = K_3$; that is, Johansson showed that if $G$ is triangle-free, then $\chi(G) \leq c\Delta/\log \Delta$ for some constant $c > 0$. Johansson's proof gave the value $c = 9$ \cite[125]{MolloyReed}, which was later improved to $4 + o(1)$ by Pettie and Su \cite{PS15} and, finally, to $1+o(1)$ by Molloy \cite{Molloy}, matching Kim's bound for graphs of girth at least $5$.
    
    In the same paper where they stated Conjecture~\ref{conj:AKS}, Alon, Krivelevich, and Sudakov verified it for the complete tripartite graph $F = K_{1,t,t}$ \cite[Corollary 2.4]{AKSConjecture}. \ep{Note that the case $t = 1$ yields Johansson's theorem.}  Their results give the bound $c_F = O(t)$ for such $F$, which was recently improved to $t + o(1)$ by Davies, Kang, Pirot, and Sereni \cite[\S5.6]{DKPS}. Numerous other results related to Conjecture~\ref{conj:AKS} can be found in the same paper.
    
    Here we are interested in the case when the forbidden graph $F$ is bipartite. It follows from the result of Davies, Kang, Pirot, and Sereni mentioned above that if $F = K_{t,t}$, then Conjecture~\ref{conj:AKS} holds with $c_F = t + o(1)$. Prior to this work, this has been the best known bound for all $t \geq 3$ \ep{the graph $F = K_{2,2}$ satisfies Conjecture~\ref{conj:AKS} with $c_F = 1 + o(1)$ by \cite[Theorem~4]{DKPS}}. We improve this bound to $1 + o(1)$ for all $t$ \ep{so only the lower order term actually depends on the graph $F$}:
    
    \begin{theo}\label{theo:col}
        For every bipartite graph $F$ and every $\eps > 0$, there is $\Delta_0 \in \N$ such that every $F$-free graph $G$ of maximum degree $\Delta \geq \Delta_0$ satisfies $\chi(G) \leq (1+\eps)\Delta/\log \Delta$.
    \end{theo}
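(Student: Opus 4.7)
The plan is a semi-random ``nibble'' coloring argument in the spirit of Johansson and Molloy, preceded by a structural reduction. Every bipartite graph $F$ is a subgraph of $K_{t,t}$ for $t \defeq |V(F)|$, so whenever $G$ contains $K_{t,t}$ it also contains $F$; contrapositively, every $F$-free graph is $K_{t,t}$-free. It therefore suffices to prove Theorem~\ref{theo:col} in the special case $F = K_{t,t}$, with $\Delta_0$ allowed to depend on $t$.

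Fix $\eps > 0$ and set $q \defeq \lceil (1+\eps)\Delta/\log\Delta\rceil$. I would color $G$ in $T = O(\log\Delta)$ rounds. At the start of each round every still-uncolored vertex $v$ carries a palette $L(v) \subseteq [q]$; during the round each such $v$ independently tries a uniform random color from $L(v)$, conflicting tentative assignments are discarded, and palettes are updated by deleting colors now permanently taken by a neighbor. Using the \LLL{}, the aim is to maintain the invariant that after $k$ rounds every uncolored $v$ still satisfies $|L(v)| \geq (1+\eps/2)\,d_k(v)/\log d_k(v)$, where $d_k(v)$ counts $v$'s uncolored neighbors. Once $d_k(v)$ becomes small enough, the residual list-coloring problem is finished off greedily.

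The central estimate driving the invariant is an upper bound on the expected palette loss per round. In Molloy's triangle-free argument this is controlled by the fact that any two neighbors of $v$ may independently retain the same color. For $K_{t,t}$-free $G$ the subgraph induced on $N(v)$ is $K_{t,t}$-free, so by the K\H{o}v\'ari--S\'os--Tur\'an theorem it has at most $c_t\, \Delta^{2-1/t} = o(\Delta^2)$ edges. Consequently $(1-o(1))\binom{d_k(v)}{2}$ pairs of neighbors remain free to share a color, and the color-loss computation should then run essentially as in the triangle-free case, losing only a $(1+o(1))$ factor in the final palette-size bound---this is where the improvement over the $(t + o(1))$ bound of Davies--Kang--Pirot--Sereni arises, since the leading constant ceases to depend on $t$.

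The hardest step will be proving a sufficiently sharp concentration inequality for the palette sizes around their expectations despite the $O(\Delta^{2-1/t})$ residual edges in $N(v)$, which introduce correlations between blocking events for distinct colors. I expect this to require a Talagrand-type estimate combined with a carefully designed ``wasteful coloring'' step, in which a small fraction of vertices are deliberately left uncolored to decouple dependencies. Carrying out the whole argument inside the DP-coloring framework from the outset---tracking DP-correspondences rather than ordinary color lists---should then deliver the DP-coloring extension promised in the abstract essentially for free.
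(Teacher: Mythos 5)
Your high-level outline matches the paper's: reduce $F$-free to $K_{t,t}$-free (the paper uses $K_{s,t}$ with $s,t$ the part sizes, a minor sharpening), run an iterative wasteful-coloring procedure, invoke the K\H{o}v\'ari--S\'os--Tur\'an theorem to rescue a Talagrand-type concentration bound, and work inside the DP-coloring framework throughout. But the specific KST application you propose is aimed at the wrong obstruction, and that is precisely where the paper's real work lies.

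You apply KST to the subgraph induced on $N_G(v)$, bound its edge count by $O(\Delta^{2-1/t})$, and infer that "most pairs of neighbors remain free to share a color." Few edges inside $N(v)$, however, only bounds triangles through $v$; that is not what blocks concentration. The quantity that must be concentrated is the degree $d'(c)$ of each color $c$ in the updated cover graph $H'$, i.e.\ the number of $c' \in N_H(c)$ that stay kept with an uncolored underlying vertex. Talagrand fails precisely when a single activated vertex $w$, by choosing a color $c''$, can disturb many such $c'$ at once, and this happens when $|N_H(c'') \cap N_H(c)|$ is large. This is a codegree phenomenon in the second neighborhood $N^2_H(c)$ of the cover graph, not a count of edges in $G[N(v)]$, and $K_{t,t}$-freeness does not forbid it outright (individual codegrees can be as large as $d$ when $s \geq 3$). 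The paper instead calls $c'' \in N^2_H(c)$ bad if it has $\geq d^{1-\delta}$ neighbors in $N_H(c)$, calls $c' \in N_H(c)$ sad if it has $\geq d^{1-\delta}$ bad neighbors, and applies KST to the bipartite graph between $\Bad$ and $N_H(c)$ to show $|\Sad|$ is small (Claim~\ref{sadBound}). Even with this in hand, one still needs a random partition of $\Happy$ into blocks of size $\approx d^{\tau}$ and the Bruhn--Joos exceptional version of Talagrand's inequality (Theorem~\ref{ExceptionalTalagrand}) to prove Lemma~\ref{degreeConcentration}. Your sketch does not supply any of this machinery, and the KST edge bound on $N(v)$ you compute does not feed into a workable certificate for the concentration step.
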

    
    As witnessed by random $\Delta$-regular graphs, the upper bound on $\chi(G)$ in Theorem~\ref{theo:col} is asymptotically optimal up to a factor of $2$ \cite{BollobasIndependence}. Furthermore, this bound coincides with the so-called {shattering threshold} for colorings of random graphs of average degree $\Delta$ \cite{Zdeborova,Achlioptas}, as well as the density threshold for factor of i.i.d.~independent sets in $\Delta$-regular trees \cite{RV}, which suggests that reducing the number of colors further would be a challenging problem, even for graphs $G$ of large girth. Indeed, it is not even known if such graphs admit independent sets of size greater than $(1+o(1))|V(G)| \log \Delta/\Delta$.

    In view of the results in \cite{AKSConjecture} and \cite{DKPS}, it is natural to ask if a version of Theorem~\ref{theo:col} also holds for $F = K_{1,t,t}$. We give the affirmative answer in a forthcoming paper \cite{nbhd}, where we use some of the techniques developed here to prove that every $K_{1,t,t}$-free graph $G$ satisfies $\chi(G) \leq (4+o(1))\Delta/\log \Delta$. In other words, we eliminate the dependence on $t$ in the constant factor, although we are unable to reduce it all the way to $1 + o(1)$.
     
    Returning to the case of bipartite $F$, we establish an extension of Theorem~\ref{theo:col} in the context of \emph{DP-coloring} \ep{also known as \emph{correspondence coloring}}, which was introduced a few years ago by Dvo\v{r}\'ak and Postle \cite{DPCol}. DP-coloring is a generalization of list coloring. Just as in ordinary list coloring, we assume that every vertex $u \in V(G)$ of a graph $G$ is given a list $L(u)$ of colors to choose from. In contrast to list coloring though, the identifications between the colors in the lists are allowed to vary from edge to edge. That is, each edge $uv \in E(G)$ is assigned a matching $M_{uv}$ (not necessarily perfect and possibly empty) from $L(u)$ to $L(v)$. A \emph{proper DP-coloring} then is a mapping $\phi$ that assigns a color $\phi(u) \in L(u)$ to each vertex $u \in V(G)$ so that whenever $uv \in E(G)$, we have $\phi(u)\phi(v) \notin M_{uv}$. Note that list coloring is indeed a special case of DP-coloring which occurs when the colors ``correspond to themselves,'' i.e., for each $c \in L(u)$ and $c' \in L(v)$, we have $cc' \in M_{uv}$ if and only if $c = c'$.
    
    Formally, we describe DP-coloring using an auxiliary graph $H$ called a \emph{DP-cover} of $G$. Here we treat the lists of colors assigned to distinct vertices as pairwise disjoint \ep{this is a convenient assumption that does not restrict the generality of the model}. The definition below is a modified version of the one given in \cite{JMTheorem}: 

    \begin{defn}\label{corrcov}
    A \emphdef{DP-cover} \ep{or a \emphd{correspondence cover}} of a graph $G$ is a pair $\mathcal{H} = (L, H)$, where $H$ is a graph and $L \colon V(G) \to 2^{V(H)}$ is a function such that:
    \begin{itemize}
        \item The set $\{L(v) \,:\, v \in V(G)\}$ forms a partition of $V(H)$.
        \item For each $v \in V(G)$, $L(v)$ is an independent set in $H$.
        \item For $u$, $v \in V(G)$, the induced subgraph $H[L(u) \cup L(v)]$ is a matching; this matching is empty whenever $uv \notin E(G).$
    \end{itemize}
    We refer to the vertices of $H$ as \emphd{colors}. 
    For $c \in V(H)$, we let $L^{-1}(c)$ denote the \emphd{underlying vertex} of $c$ in $G$, i.e., the unique vertex $v \in V(G)$ such that $c \in L(v)$. If two colors $c$, $c' \in V(H)$ are adjacent in $H$, we say that they \emphd{correspond} to each other and write $c \sim c'$.
    An \emphd{$\mathcal{H}$-coloring} is a mapping $\phi \colon V(G) \to V(H)$ such that $\phi(u) \in L(u)$ for all $u \in V(G)$. Similarly, a \emphd{partial $\mathcal{H}$-coloring} is a partial map $\phi \colon V(G) \pto V(H)$ such that $\phi(u) \in L(u)$ whenever $\phi(u)$ is defined. A \ep{partial} $\mathcal{H}$-coloring $\phi$ is \emphd{proper} if the image of $\phi$ is an independent set in $H$, i.e., if $\phi(u) \not \sim \phi(v)$ 
    for all $u$, $v \in V(G)$ such that $\phi(u)$ and $\phi(v)$ are both defined. 
    A DP-cover $\mathcal{H}$ is \emphdef{$k$-fold} for some $k \in \N$ if $|L(u)| \geq k$ for all $u \in V(G)$. The \emphdef{DP-chromatic number} of $G$, denoted by $\chi_{DP}(G)$, is the smallest $k$ such that $G$ admits a proper $\mathcal{H}$-coloring with respect to every $k$-fold DP-cover $\mathcal{H}$.
    \end{defn}
    
    An interesting feature of DP-coloring is that it allows one to put structural constraints not on the base graph, but on the cover graph instead. For instance, Cambie and Kang \cite{CK} made the following conjecture:
    
    \begin{conj}[{Cambie--Kang \cite[Conjecture 4]{CK}}]\label{conj:CK}
        For every $\eps > 0$, there is $d_0 \in \N$ such that the following holds. Let $G$ be a triangle-free graph and let $\mathcal{H} = (L,H)$ be a DP-cover of $G$. If $H$ has maximum degree $d \geq d_0$ and $|L(u)| \geq (1+\eps)d/\log d$ for all $u \in V(G)$, then $G$ admits a proper $\mathcal{H}$-coloring.
    \end{conj}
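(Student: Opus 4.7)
The plan is to translate the coloring problem into one about the cover graph $H$ and then run a Molloy-style iterative random coloring argument directly on $H$. The pivotal structural observation is that $H$ is itself triangle-free: if colors $c_1 \in L(v_1)$, $c_2 \in L(v_2)$, $c_3 \in L(v_3)$ were pairwise adjacent in $H$, then the $v_i$ would be pairwise distinct (since each $L(v_i)$ is independent) and pairwise adjacent in $G$ (by definition of a DP-cover), contradicting triangle-freeness of $G$. Hence every neighborhood in $H$ is an independent set of size at most $d$, which is exactly the hypothesis that powers the triangle-free coloring results of Kim, Johansson, and Molloy.

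With this in hand, I would run the ``wasteful'' random coloring procedure, adapted to DP-covers. In each round, every vertex $u$ independently selects a tentative color $\phi(u)$ uniformly from its current list $L(u)$; a tentative color is \emph{retained} if no neighbor's tentative color corresponds to it in $H$, and colors that correspond to a retained color at a neighbor are then deleted from the list at $u$. Write $L'(u)$ for the surviving list. Because $H$ is triangle-free, the indicator events ``$c$ is still available at $u$'' for $c \in L(u)$ are, up to negligible error, negatively correlated, giving the familiar estimate that $\mathbb{E}[|L'(u)|]$ is of order $|L(u)|\, e^{-d(u)/|L(u)|}$, where $d(u) \defeq \max_{c \in L(u)} \deg_H(c)$. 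Talagrand's inequality then provides sharp concentration of $|L'(u)|$ and of the updated cover-degrees $d'(c)$; the associated bad events have bounded dependency, so the Lov\'asz Local Lemma preserves an invariant of the form $|L(u)| \geq (1+\eps)\, d(u)/\log d(u)$ at every vertex simultaneously. Iterating $O(\log d)$ times drives the lists well past the residual cover-degrees, after which one final LLL (or entropy-compression) step produces the proper $\mathcal{H}$-coloring.

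The main obstacle, I expect, is calibrating the concentration inequalities so that the Lipschitz constants depend on $d = \Delta(H)$ alone, rather than on $\Delta(G)$ or on the list sizes. Resampling the tentative choice at a single vertex can flip the status of at most $d$ colors in $V(H)$, so the relevant sums of squared influences are naturally of order $d$; but vertices whose $G$-neighborhoods are much larger than their effective cover-degrees introduce genuine subtleties, and ensuring uniform control --- in particular, that no vertex is starved of available colors when its local cover-graph structure is atypical --- is where the bulk of the technical work will sit. A secondary subtlety is the base case $d_0$: the approximation $\mathbb{E}[|L'(u)|] \approx |L(u)|\, e^{-d(u)/|L(u)|}$ only becomes sharp in the limit $d \to \infty$, and extracting the clean constant $1+\eps$ (rather than something like $4+\eps$) will require the refined moment computations introduced by Molloy and subsequently by Davies--Kang--Pirot--Sereni.
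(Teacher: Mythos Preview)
The statement you are attempting to prove is presented in the paper as an \emph{open conjecture}, not a theorem; the paper explicitly remarks that even the list-coloring special case with constant $1+o(1)$ is open. Your ``pivotal structural observation'' that $H$ inherits triangle-freeness from $G$ is correct, but the paper already notes it in the sentence immediately following the conjecture, and it is not by itself enough to push the argument through.

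The genuine gap is the one you flag but do not close. In the iterative scheme the paper uses, the list-size concentration (their Lemmas~\ref{listExpectation}--\ref{listSizeConcentration}) works with no structural hypothesis on $H$; the entire difficulty is the concentration of the updated cover-degree $d'(c)$ (their Lemma~\ref{degreeConcentration}). That quantity is governed by the \emph{second} neighborhood of $c$ in $H$: whether a neighbor $c'$ of $c$ survives depends on $\col(A)\cap N_H(c')$, and the sets $N_H(c')$ for $c'\in N_H(c)$ can overlap heavily. Triangle-freeness of $H$ says nothing about these overlaps---a triangle-free $H$ can contain $K_{s,t}$ for arbitrarily large $s,t$---and it is precisely a $K_{s,t}$-freeness hypothesis, fed through the K\H{o}v\'ari--S\'os--Tur\'an theorem, that the paper uses to bound the ``bad'' and ``sad'' colors and make Talagrand's inequality applicable. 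Your appeal to negative correlation from triangle-freeness controls first-neighborhood events, not these second-neighborhood interactions.

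Your final paragraph correctly identifies that the Lipschitz constants must depend only on $d=\Delta(H)$ and not on $\Delta(G)$, but offers no mechanism for achieving this; neither Molloy's entropy-compression argument nor the Davies--Kang--Pirot--Sereni refinements are known to give the required degree concentration under a bound on $\Delta(H)$ alone. As written, the proposal is a plausible outline that stops exactly at the point where the problem becomes hard, and does not constitute a proof.
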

    
    The conclusion of Conjecture~\ref{conj:CK} is known to hold if $d$ is taken to be the maximum degree of $G$ rather than of $H$ \cite{JMTheorem} \ep{notice that $\Delta(G) \geq \Delta(H)$, so a bound on $\Delta(G)$ is a stronger assumption than a bound on $\Delta(H)$}. Cambie and Kang \cite[Corollary 3]{CK} verified Conjecture~\ref{conj:CK} when $G$ is not just triangle-free but bipartite. Amini and Reed \cite{AminiReed} and, independently, Alon and Assadi \cite[Proposition 3.2]{Pallette} proved a version of Conjecture~\ref{conj:CK} for list coloring, but with $1 + o(1)$ replaced by a larger constant \ep{$8$ in \cite{Pallette}}. To the best of our knowledge, it is an open problem to reduce the constant factor to $1 + o(1)$ even in the list coloring framework.
    
    Notice that in Cambie and Kang's conjecture, the {base graph} $G$ is assumed to be triangle-free \ep{which, of course, implies that $H$ is triangle-free as well}. In principle, it is possible that $H$ is triangle-free while $G$ is not, and it seems that the conclusion of Conjecture~\ref{conj:CK} could hold even then. We suspect that Conjecture~\ref{conj:AKS} should also hold in the following stronger form:
    
    \begin{conj}
        For every graph $F$, there is a constant $c_F > 0$ such that the following holds. Let $G$ be a graph and let $\mathcal{H} = (L,H)$ be a DP-cover of $G$. If $H$ is $F$-free and has maximum degree $d \geq 2$ and if $|L(u)| \geq c_F d /\log d$ for all $u \in V(G)$, then $G$ admits a proper $\mathcal{H}$-coloring.
    \end{conj}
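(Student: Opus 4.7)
The plan rests on a clean structural reduction: for any graph $F$ on $r$ vertices, $F$ is a subgraph of $K_r$, so any $F$-free graph is automatically $K_r$-free. It therefore suffices to establish the conjecture for $F = K_r$ for each integer $r \geq 3$; one may then take $c_F \defeq c_{K_{|V(F)|}}$ for arbitrary $F$ (the cases $|V(F)| \leq 2$ being vacuous under the hypothesis $d \geq 2$). This reduces the problem to a DP-coloring analog of a Ramsey-type coloring theorem in which the $K_r$-freeness constraint is imposed on the cover graph $H$ rather than on the base graph $G$.

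For the clique case, I would attempt to adapt the probabilistic semi-random coloring machinery behind Theorem~\ref{theo:col} and Molloy's triangle-free theorem \cite{Molloy}, working directly on the cover graph $H$. The key recursive feature is that in a $K_r$-free graph, the neighborhood of every vertex induces a $K_{r-1}$-free subgraph, which yields a Tur\'an-type bound on local edge density. Such a local density bound is the standard input to nibble arguments: one runs an iterative random coloring in which at each round every uncolored vertex is activated with probability $p \approx 1/\log d$, assigned a uniformly random color from its residual list, and uncolored upon conflict with a neighbor's chosen color. The local sparsity bound controls the correlations among ``blocking'' events within a single list, and the Lov\'asz Local Lemma would then ensure that each list retains $\Omega(d/\log d)$ colors after $O(\log \log d)$ rounds; a final local lemma application at residual max degree $O(d/\log d)$ would finish the coloring.

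The main obstacle is the base case $r = 3$, which is exactly Conjecture~\ref{conj:CK} of Cambie--Kang. The existing DP-coloring proofs for triangle-free covers (e.g., \cite{JMTheorem}) crucially require the bound to be on $\Delta(G)$ rather than on $\Delta(H)$, and the corresponding list-coloring versions \cite{AminiReed,Pallette} give only $c_F = O(1)$ with a large absolute constant. Resolving this base case --- devising a nibble or entropy-compression argument that operates under the weaker hypothesis $|L(u)| \geq c_F \Delta(H)/\log \Delta(H)$ --- is the decisive step; once it is in hand, the inductive passage to $K_r$ for $r \geq 4$ via the Tur\'an-type local sparsity should follow by technically involved but conceptually standard modifications of the $r = 3$ argument.
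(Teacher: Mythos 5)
This statement is presented in the paper as an open \emph{conjecture}, not a theorem; the paper offers no proof of it. What the paper does prove (Theorem~\ref{mainTheorem} and Corollary~\ref{mainCorollary}) is the special case where $F$ is bipartite, via the nibble argument whose key new ingredient is the degree-concentration bound of Lemma~\ref{degreeConcentration}, which uses the K\H{o}v\'ari--S\'os--Tur\'an theorem and hence depends crucially on $H$ being $K_{s,t}$-free. Your proposal is not a proof of the conjecture but a plan contingent on two unresolved problems, and you say so for one of them but materially underestimate the other.

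Your reduction to $F = K_r$ is correct: since $F$ is a subgraph of $K_{|V(F)|}$, any graph containing $K_{|V(F)|}$ contains $F$, so $F$-freeness implies $K_{|V(F)|}$-freeness, and it would suffice to handle cliques. You are also right that the $r=3$ base case is exactly Conjecture~\ref{conj:CK} of Cambie--Kang and is open. The serious gap is in your final claim that the ``inductive passage to $K_r$ for $r \geq 4$ via the Tur\'an-type local sparsity should follow by technically involved but conceptually standard modifications of the $r=3$ argument.'' This is not the state of the art. Even in the classical non-DP setting, with the max-degree constraint on $G$ itself, Conjecture~\ref{conj:AKS} is open for $F = K_4$: the best known bound there is Johansson's $O(\Delta \log\log\Delta / \log\Delta)$, a full $\log\log\Delta$ factor away from the conjectured truth. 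The reason the nibble machinery does not simply transfer is that local sparsity of neighborhoods (the Tur\'an bound you invoke) is a much weaker structural input than the near-independence of neighborhoods that makes the triangle-free argument go through; controlling the second-neighborhood correlations that drive the list-size and degree concentration is precisely where the difficulty lies, and no one knows how to do it for $K_4$-free graphs. So the ``decisive step'' is not only the $r=3$ base case you flag; the inductive step for $r\geq 4$ is at least as deep an open problem. As written, the proposal proves nothing, and a reader should not come away with the impression that only the base case is missing.
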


    After this discussion, we are now ready to state our main result:

    \begin{theo}\label{mainTheorem}
        There is a constant $\alpha > 0$ such that for every $\eps > 0$, there is $d_0 \in \N$ such that the following holds. Suppose that $d$, $s$, $t \in \N$ satisfy
        \[
            d \geq d_0, \quad s \leq d^{\alpha \eps}, \quad \text{and} \quad  t \leq \frac{\alpha\eps\log d}{\log \log d}.
        \]
        If $G$ is a graph and $\mathcal{H} = (L,H)$ is a DP-cover of $G$ such that:
        \begin{enumerate}[label=\ep{\normalfont\roman*}]
            \item $H$ is $K_{s,t}$-free,
            \item $\Delta(H) \leq d$, and
            \item $|L(u)| \geq (1+\eps)d/\log d$ for all $u \in V(G)$,
        \end{enumerate}
        then $G$ has a proper $\mathcal{H}$-coloring.
    \end{theo}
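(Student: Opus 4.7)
The plan is to use a Molloy-style random-coloring procedure adapted to DP-coloring, relaxing the local estimates used in the triangle-free DP argument of \cite{JMTheorem} so that they depend only on the $K_{s,t}$-freeness of the cover graph $H$. Since the hypothesis constrains $H$ rather than $G$ itself, every structural calculation will be performed inside $H$ from the start.

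At the core is a single ``nibble'' step: each $u \in V(G)$ is activated independently with a small probability $\eta$, an independent uniform random color $\phi(u) \in L(u)$ is chosen, and $\phi(u)$ is retained only if $\phi(u) \not\sim \phi(v)$ for every activated neighbor $v$. For each uncolored vertex $u$ I would track the surviving list $L'(u) \subseteq L(u)$ of colors not blocked by an already-colored neighbor, and for each $c \in L'(u)$ its residual $H$-degree $d'(c)$ into surviving lists. The aim is to show that $|L'(u)| / \max_c d'(c)$ improves by a $(1 + \Omega(1/\log d))$ factor in expectation. Iterating $\Theta(\log\log d)$ such steps will either finish the coloring or reduce to an instance with enough list-over-degree slack to be completed by a single application of the Lov\'asz Local Lemma.

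The $K_{s,t}$-freeness enters the estimation of $\E[\,|L'(u)|\,]$. A color $c \in L(u)$ leaves $L'(u)$ when some $H$-neighbor of $c$ is successfully colored by a corresponding color. In the triangle-free setting these events are essentially independent because distinct colors of $L(u)$ have disjoint neighborhoods in each $L(v)$. Here colors of $L(u)$ may share many common $H$-neighbors, but no $s$ of them share as many as $t$: a truncated inclusion--exclusion then bounds the correlations between the removal events by an additive correction of order roughly $(s+t) \log\log d / \log d$, and this correction is absorbed by the $(1+\eps)$ slack precisely under the quantitative hypotheses $s \leq d^{\alpha\eps}$ and $t \leq \alpha\eps \log d/\log\log d$, which is exactly what dictates those bounds.

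The main obstacle is this second-moment analysis under the weaker $K_{s,t}$-free hypothesis: in the triangle-free case the saving probabilities for distinct colors of $L(u)$ are practically sums of independent indicators, whereas here one has to quantify how $K_{s,t}$-freeness alone forces near-independence, tracking every combinatorial structure where the inclusion--exclusion terms could conceivably blow up. Once the per-step expected improvement is established, concentration of $|L'(u)|$ and the $d'(c)$'s around their means will follow from Talagrand's inequality with polylogarithmic certificate constants, and coordinating the estimates across $V(G)$ will be done by the Lov\'asz Local Lemma; the dependency graph of the relevant bad events has degree polynomial in $d$, which is compatible with the exponentially small failure probabilities that the bounds on $s$ and $t$ permit. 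The iteration and the final round then proceed along standard lines.
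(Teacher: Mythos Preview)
Your overall architecture matches the paper's, but you have misidentified where $K_{s,t}$-freeness is needed, and the place you point to is in fact trivial. You say the hypothesis enters in controlling $\E[|L'(u)|]$ and the correlations between removal events for different colors of $L(u)$. But by the matching condition in Definition~\ref{corrcov}, distinct colors $c,c'\in L(u)$ have \emph{disjoint} $H$-neighborhoods: any common neighbor would lie in some $L(v)$ and be matched to two colors of $L(u)$. Hence $\E[|L'(u)|]=(1-\eta/\ell)^d\,\ell$ exactly by linearity, with no structural input, and concentration of $|L'(u)|$ is a one-line Talagrand argument with Lipschitz constant $1$ and certificates of size $1$ per removed color (Lemmas~\ref{listExpectation} and~\ref{listSizeConcentration}). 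The ``truncated inclusion--exclusion'' you propose is aimed at a quantity that needs no such help.

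The only place $K_{s,t}$-freeness is actually used is the concentration of $d'(c)$, and your proposal contains nothing addressing it. Whether a neighbor $c'\in N_H(c)$ survives depends on $N_H(c')\subseteq N_H^2(c)$, and a single second-neighbor $c''\in N_H^2(c)$ may be adjacent to many colors in $N_H(c)$; changing the outcome at $L^{-1}(c'')$ can then shift $d'(c)$ by a large amount, so Talagrand does not apply directly. The paper calls $c''$ \emph{bad} if $|N_H(c'')\cap N_H(c)|\geq d^{1-1/(3t)}$ and $c'\in N_H(c)$ \emph{sad} if it has at least $d^{1-1/(3t)}$ bad neighbors; the K\H{o}v\'ari--S\'os--Tur\'an theorem, applied to the bipartite graph between $N_H(c)$ and the bad colors, shows there are at most $d^{1-\Omega(1/t)}$ sad colors (Claim~\ref{sadBound}), and after discarding them a further random partition of the remaining ``happy'' colors into pieces of size $d^{\Theta(1/t)}$ makes Exceptional Talagrand applicable piecewise. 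This second-neighborhood analysis is the heart of the argument and is entirely absent from your plan. (A smaller point: each step improves the ratio by only $1+\Theta(1/\log d)$, so bringing $\ell/d$ from $\Theta(1/\log d)$ up to a constant takes $\Theta(\log d\cdot\log\log d)$ iterations, not $\Theta(\log\log d)$.)
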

    
    If $F$ is an arbitrary bipartite graph with parts of size $s$ and $t$, then an $F$-free graph is also $K_{s,t}$-free. Thus, Theorem~\ref{mainTheorem} yields the following result for large enough $d_0$ as a function of $s$, $t$, and $\eps$: 
    \begin{corl}\label{mainCorollary}
        For every bipartite graph $F$ and $\eps > 0$, there is $d_0 \in \N$ such that the following holds. Let $d \geq d_0$. Suppose $\mathcal{H} = (L,H)$ is a DP-cover of $G$ such that:
        \begin{enumerate}[label=\ep{\normalfont\roman*}]
            \item $H$ is $F$-free,
            \item $\Delta(H) \leq d$, and
            \item $|L(u)| \geq (1+\eps)d/\log d$ for all $u \in V(G)$.
        \end{enumerate}
        Then $G$ has a proper $\mathcal{H}$-coloring.
    \end{corl}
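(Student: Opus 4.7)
The plan is to derive Corollary~\ref{mainCorollary} as an immediate consequence of Theorem~\ref{mainTheorem} by reducing the $F$-free condition to a $K_{s,t}$-free condition on the cover graph. The key combinatorial observation is that if $F$ is a bipartite graph with parts of sizes $s$ and $t$, then $F$ embeds as a subgraph of $K_{s,t}$; hence any graph that contains $K_{s,t}$ must also contain $F$. Contrapositively, every $F$-free graph is $K_{s,t}$-free. In particular, the hypothesis that the cover graph $H$ is $F$-free already implies that $H$ is $K_{s,t}$-free, so hypothesis \ep{i} of Theorem~\ref{mainTheorem} is satisfied for free once we fix $s$ and $t$ to be the sizes of the two parts of $F$.

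Next I would verify the quantitative constraints on $s$ and $t$ required by Theorem~\ref{mainTheorem}. Because $F$ is fixed, $s$ and $t$ are constants depending only on $F$. Let $\alpha > 0$ be the constant supplied by Theorem~\ref{mainTheorem}, and let $d_0'$ denote the threshold given by that theorem for the given $\eps$. Since $d^{\alpha\eps} \to \infty$ and $\alpha\eps \log d/\log\log d \to \infty$ as $d \to \infty$, there exists $d_0 \geq d_0'$ \ep{depending on $F$ and $\eps$} such that, for all $d \geq d_0$, one has $s \leq d^{\alpha\eps}$ and $t \leq \alpha\eps\log d/\log\log d$. With this choice of $d_0$, all three hypotheses of Theorem~\ref{mainTheorem} are met, and applying that theorem to $G$ together with the DP-cover $\mathcal{H}$ produces the desired proper $\mathcal{H}$-coloring.

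There is no substantive obstacle to overcome in this argument: all of the difficulty is packaged inside Theorem~\ref{mainTheorem}, and the corollary amounts to book-keeping that absorbs the dependence on $F$ \ep{through $s$ and $t$} into the choice of $d_0$. The only thing to be mindful of is that the $d_0$ produced here is not uniform in $F$, as it must be at least large enough to accommodate the specific values of $s$ and $t$ associated with $F$, which is why the statement of Corollary~\ref{mainCorollary} allows $d_0$ to depend on $F$.
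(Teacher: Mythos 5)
Your proof is correct and follows exactly the route the paper takes: since a bipartite $F$ with parts of sizes $s$ and $t$ is a subgraph of $K_{s,t}$, an $F$-free cover graph is automatically $K_{s,t}$-free, and the constraints on $s$ and $t$ in Theorem~\ref{mainTheorem} are absorbed by choosing $d_0$ large enough as a function of $F$ and $\eps$. Nothing is missing.
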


    Setting $d = \Delta(G)$ in Corollary~\ref{mainCorollary} gives an extension of Theorem~\ref{theo:col} to DP-coloring:
    \begin{corl}\label{corl:DP}
        For every bipartite graph $F$ and $\eps > 0$, there is $\Delta_0 \in \N$ such that every $F$-free graph $G$ with maximum degree $\Delta \geq \Delta_0$ satisfies $\chi_{DP}(G) \leq (1 + \eps)\Delta/\log \Delta$. 
    \end{corl}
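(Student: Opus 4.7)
The plan is to derive Corollary~\ref{corl:DP} directly from Corollary~\ref{mainCorollary} by setting $d = \Delta$. Let $F$ be a bipartite graph with a fixed bipartition of sizes $s$ and $t$, and fix $\eps > 0$. I would take $\Delta_0 \defeq d_0$, where $d_0$ is the constant produced by Corollary~\ref{mainCorollary} applied with $K_{s,t}$ in place of $F$ and with the given $\eps$. Now let $G$ be an $F$-free graph of maximum degree $\Delta \geq \Delta_0$, and let $\mathcal{H} = (L, H)$ be an arbitrary DP-cover of $G$ satisfying $|L(u)| \geq (1+\eps)\Delta/\log\Delta$ for every $u \in V(G)$. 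By the definition of $\chi_{DP}$, it suffices to produce a proper $\mathcal{H}$-coloring.

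To invoke Corollary~\ref{mainCorollary} with $d = \Delta$ and forbidden subgraph $K_{s,t}$, I would verify its three hypotheses. Condition (iii) is immediate from the assumption on $|L(u)|$, and condition (ii), $\Delta(H) \leq \Delta$, follows at once from Definition~\ref{corrcov}: any color $c \in L(v)$ has neighbors only in $\bigcup_{u \sim v} L(u)$, and the matching structure of $H[L(v) \cup L(u)]$ contributes at most one neighbor per neighbor $u$ of $v$ in $G$. The substantive step is condition (i), namely that $H$ is $K_{s,t}$-free. Since $F \subseteq K_{s,t}$ and $G$ is $F$-free, $G$ itself is $K_{s,t}$-free, so the task reduces to the auxiliary claim that whenever $G$ is $K_{s,t}$-free, every DP-cover graph $H$ of $G$ is $K_{s,t}$-free as well.

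To establish this claim I would argue by contradiction. Suppose $H$ contains $K_{s,t}$ with parts $A$ and $B$ of sizes $s$ and $t$ (we may assume $s, t \geq 1$, else the statement is vacuous). I would first show that the underlying-vertex map $L^{-1}$ is injective on $A$: if two distinct colors $a_1, a_2 \in A$ shared an underlying vertex $v$, then both would lie in the independent set $L(v)$ yet both would be adjacent in $H$ to every $b \in B$, contradicting the fact that the matching $H[L(v) \cup L(L^{-1}(b))]$ contains at most one edge at $b$. Injectivity of $L^{-1}$ on $B$ is symmetric, and $L^{-1}(A) \cap L^{-1}(B) = \emptyset$ because any $a \in A$ and $b \in B$ are adjacent in $H$ and hence lie in different lists. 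Consequently $L^{-1}(A)$ and $L^{-1}(B)$ are disjoint subsets of $V(G)$ of sizes $s$ and $t$, and each edge of the $K_{s,t}$ in $H$ projects to an edge of $G$ between them, yielding a $K_{s,t}$ subgraph of $G$---a contradiction. Corollary~\ref{mainCorollary} then delivers the desired proper $\mathcal{H}$-coloring. The only real subtlety is this auxiliary claim; once it is in hand the deduction is pure bookkeeping, since the quantitative thresholds have already been absorbed into $\Delta_0$.
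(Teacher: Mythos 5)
Your proposal is correct and matches the paper's one-line deduction of this corollary from Corollary~\ref{mainCorollary} by setting $d = \Delta(G)$. The added value in your write-up is that you make explicit, and correctly prove, the bridging fact the paper leaves tacit: a DP-cover graph $H$ of a $K_{s,t}$-free graph $G$ is itself $K_{s,t}$-free, which is exactly what is needed since Corollary~\ref{mainCorollary} constrains $H$ rather than $G$. Your argument via injectivity of $L^{-1}$ on each side of a putative $K_{s,t}$ in $H$ (forced by the matching condition on $H[L(u)\cup L(v)]$), disjointness of the two projected sides (forced by independence of each $L(v)$), and projection of $H$-edges to $G$-edges is exactly right; and it is worth noting that this implication is special to complete bipartite graphs---for a general bipartite $F$ the cover graph of an $F$-free base graph need not be $F$-free (a DP-cover of $K_3$ can be a $C_6$, which contains $P_4$)---so routing the argument through $K_{s,t}$, as you do, rather than trying to show $H$ is $F$-free directly, is the correct move.
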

    
    We close this introduction with
    a few words about the proof of Theorem~\ref{mainTheorem}. To find a proper $\mathcal{H}$-coloring of $G$ we employ a variant of the so-called ``R\"odl Nibble'' method, in which we randomly color a small portion of $V(G)$ and then iteratively repeat the same procedure with the vertices that remain uncolored. \ep{See \cite{Nibble} for a recent survey on this method.} Throughout the iterations, both the maximum degree of the cover graph and the minimum list size are decreasing, but we show that the former is decreasing at a faster rate than the latter. Thus, we eventually arrive at a situation where $\Delta(H) \ll |L(v)|$ for all $v \in V(G)$, and then it is easy to complete the coloring. The specific procedure in our proof is essentially the same as the one used by Kim \cite{Kim95} \ep{see also \cite[Chapter 12]{MolloyReed}} to bound the chromatic number of graphs of girth at least $5$, suitably modified for the DP-coloring framework. We describe it in detail in \S\ref{sectionOutline}. The main novelty in our analysis is in the proof of Lemma~\ref{degreeConcentration}, which allows us to control the maximum degree of the cover graph after each iteration. This is the only part of the proof that relies on the assumption that $H$ is $K_{s,t}$-free. The proof of Lemma~\ref{degreeConcentration} involves several technical ingredients, which we explain in \S\ref{sectionProofOfConcentration}. In \S\ref{sectionIterations}, we put the iterative process together and verify that the coloring can be completed.

\section{Preliminaries}
    In this section we outline the main probabilistic tools that will be used in our arguments. We start with the symmetric version of the Lov\'asz Local Lemma. 

    \begin{theo}[{Lov\'asz Local Lemma; \cite[\S4]{MolloyReed}}]\label{LLL}
        Let $A_1$, $A_2$, \ldots, $A_n$ be events in a probability space. Suppose there exists $p \in [0, 1)$ such that for all $1 \leq i \leq n$ we have $\P[A_i] \leq p$. Further suppose that each $A_i$ is mutually independent from all but at most $d_{LLL}$ other events $A_j$, $j\neq i$ for some $d_{LLL} \in \N$. If $4pd_{LLL} \leq 1$, then with positive probability none of the events $A_1$, \ldots, $A_n$ occur.
    \end{theo}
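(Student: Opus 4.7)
My plan is to prove the stronger asymmetric form of the \LLL\ and then specialize. The asymmetric form asserts: if there exist $x_1, \dots, x_n \in [0,1)$ such that $\P[A_i] \leq x_i \prod_{j \in N(i)}(1 - x_j)$ for every $i$ \ep{where $N(i)$ is the set of indices $j \neq i$ for which $A_i$ may depend on $A_j$}, then $\P\bigl[\bigcap_i \overline{A_i}\bigr] \geq \prod_i (1 - x_i) > 0$. To deduce the symmetric statement with the constant $4$, I would set $x_i \defeq 1/(2 d_{LLL})$ uniformly; the elementary estimate $(1 - 1/(2d))^d \geq 1/2$ for all integers $d \geq 1$ combined with the hypothesis $4 p d_{LLL} \leq 1$ gives $\P[A_i] \leq p \leq 1/(4 d_{LLL}) \leq x_i \prod_{j \in N(i)}(1 - x_j)$, as required.

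The heart of the proof is the following lemma, established by induction on $|S|$: for every $i$ and every $S \subseteq \{1,\dots,n\} \setminus \{i\}$,
\[
    \P\Bigl[A_i \,\Big|\, \bigcap_{j \in S} \overline{A_j}\Bigr] \;\leq\; x_i.
\]
The base case $S = \emptyset$ is the hypothesis. For the inductive step I would partition $S = S_1 \sqcup S_2$ with $S_1 \defeq S \cap N(i)$ and $S_2 \defeq S \setminus N(i)$, and apply Bayes' rule to rewrite the conditional probability as a ratio. The numerator is at most $\P[A_i \mid \bigcap_{j \in S_2} \overline{A_j}] = \P[A_i] \leq x_i \prod_{j \in N(i)}(1 - x_j)$, where the equality uses mutual independence of $A_i$ from $\{A_j : j \in S_2\}$. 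The denominator is expanded as a telescoping product of conditional probabilities on strictly smaller conditioning sets, and bounded below by $\prod_{j \in S_1}(1 - x_j) \geq \prod_{j \in N(i)}(1 - x_j)$ via the inductive hypothesis applied to each factor. Cancelling the common product factor gives the bound $x_i$.

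With the conditional bound in hand, the conclusion follows from the chain rule
\[
    \P\Bigl[\bigcap_{i=1}^n \overline{A_i}\Bigr] \;=\; \prod_{i=1}^n \P\Bigl[\overline{A_i} \,\Big|\, \bigcap_{j < i} \overline{A_j}\Bigr] \;\geq\; \prod_{i=1}^n (1 - x_i) \;>\; 0.
\]
I expect the main subtlety to lie in the inductive step, where one must invoke joint \ep{not merely pairwise} independence of $A_i$ from the family $\{A_j : j \in S_2\}$ to justify $\P[A_i \mid \bigcap_{j \in S_2}\overline{A_j}] = \P[A_i]$; this is exactly what the phrase \emph{mutually independent from all but at most $d_{LLL}$ other events} in the hypothesis provides, and the argument collapses without it.
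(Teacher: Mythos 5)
The paper states this Lov\'asz Local Lemma as a black-box citation from Molloy and Reed's book and does not reprove it, so there is no ``paper's own proof'' to compare against. Your argument is correct and is essentially the standard proof: establish the asymmetric form by the inductive conditional-probability bound $\P[A_i \mid \bigcap_{j \in S}\overline{A_j}] \leq x_i$, then specialize with the uniform choice $x_i = 1/(2d_{LLL})$. Your derivation of the symmetric constant $4$ is exactly where it should come from, namely the elementary inequality $(1 - 1/(2d))^d \geq 1/2$ for integers $d \geq 1$, which together with $4pd_{LLL} \leq 1$ gives $\P[A_i] \leq p \leq 1/(4d_{LLL}) \leq x_i\prod_{j\in N(i)}(1-x_j)$ since $|N(i)| \leq d_{LLL}$. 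You also correctly identify the one place where the argument genuinely uses the hypothesis of \emph{mutual} (not merely pairwise) independence from the non-neighbors: the equality $\P[A_i \mid \bigcap_{j\in S_2}\overline{A_j}] = \P[A_i]$ in bounding the numerator.

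One minor technical point worth being aware of, though it does not invalidate the sketch: the inductive step implicitly assumes $\P[\bigcap_{j\in S}\overline{A_j}] > 0$ so that all the conditional probabilities are well-defined. This positivity itself needs to be carried along in the induction (it follows from the chain-rule lower bound $\P[\bigcap_{j\in S}\overline{A_j}] \geq \prod_{j\in S}(1-x_j) > 0$, proved simultaneously), and most careful write-ups handle the inductive bookkeeping of this explicitly. With that caveat noted, the proposal is sound and matches the standard treatment that the cited source follows.
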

    
    Aside from the Local Lemma, we will require several concentration of measure bounds. The first of these is the Chernoff Bound for binomial random variables. We state the two-tailed version below:

    \begin{theo}[{Chernoff; \cite[\S5]{MolloyReed}}]\label{chernoff}
        Let $X$ be a binomial random variable on $n$ trials with each trial having probability $p$ of success. Then for any $0 \leq \xi \leq \E[X]$, we have
        \begin{align*}
            \P\Big[\big|X - \E[X]\big| \geq \xi\Big] < 2\exp{\left(-\frac{\xi^2}{3\E[X]}\right)}. 
        \end{align*}
    \end{theo}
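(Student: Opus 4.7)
The plan is to use the standard exponential moment (Chernoff) method, applied separately to the upper and lower tails and combined via a union bound. Write $X = \sum_{i=1}^n X_i$ as a sum of independent Bernoulli$(p)$ random variables, and fix $\lambda > 0$. Markov's inequality applied to $e^{\lambda X}$ gives
\[
    \P[X \geq \E[X] + \xi] \;\leq\; e^{-\lambda(\E[X]+\xi)}\, \E[e^{\lambda X}].
\]
By independence, $\E[e^{\lambda X}] = \prod_i \E[e^{\lambda X_i}] = (1-p+pe^{\lambda})^n$, and the elementary inequality $1+x \leq e^x$ with $x = p(e^{\lambda}-1)$ yields $\E[e^{\lambda X}] \leq \exp\bigl(\E[X](e^{\lambda}-1)\bigr)$.

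Next I would optimize over $\lambda$, choosing $\lambda = \log(1+\xi/\E[X])$. This gives the standard Chernoff--Hoeffding bound
\[
    \P[X \geq \E[X]+\xi] \;\leq\; \exp\Bigl(-\E[X]\cdot g(\xi/\E[X])\Bigr), \quad g(y) = (1+y)\log(1+y) - y.
\]
Using the convexity estimate $g(y) \geq y^2/(2+2y/3)$ for $y \geq 0$, and the hypothesis $\xi \leq \E[X]$ \ep{so $y \leq 1$ and $2+2y/3 \leq 8/3 < 3$}, this simplifies to $\exp(-\xi^2/(3\E[X]))$. For the lower tail, the analogous computation with $\lambda < 0$ produces the even stronger bound $\exp(-\xi^2/(2\E[X])) \leq \exp(-\xi^2/(3\E[X]))$, and a union bound over the two tails supplies the factor of $2$ in the stated inequality.

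The only technically delicate step is the convexity estimate for the function $g$, which is a standard calculus lemma; everything else is a routine application of the moment generating function of a sum of independent Bernoullis. Since the statement is quoted directly from \cite{MolloyReed}, I would simply cite this reference rather than include the full algebraic verification in the paper.
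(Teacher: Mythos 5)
The paper states this inequality as a preliminary tool and cites \cite{MolloyReed} without supplying a proof, and your proposal correctly recognizes this. Your sketch — moment generating function of a Bernoulli sum, optimization at $\lambda=\log(1+\xi/\E[X])$, the Bennett-type estimate $g(y)\geq y^2/(2+2y/3)$ combined with $\xi\leq\E[X]$ to pass to the constant $3$, and a union bound over the two tails — is a correct standard derivation of the stated two-sided Chernoff bound.
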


    We will also take advantage of two versions of Talagrand's inequality. The first version is the standard one: 
\begin{theo}[{Talagrand's Inequality; \cite[\S10.1]{MolloyReed}}]\label{Talagrand}
    Let $X$ be a non-negative random variable, not identically 0, which is a function of $n$ independent trials $T_1$, \ldots, $T_n$. Suppose that $X$ satisfies the following for some $\gamma$, $r > 0$: 
\begin{enumerate}[label=\ep{\normalfont{}T\arabic*}]
    \item Changing the outcome of any one trial $T_i$ can change $X$ by at most $\gamma$.
    \item For any $s>0$, if $X \geq s$ then there is a set of at most $rs$ trials that certify $X$ is at least $s$.
\end{enumerate}
Then for any $0 \leq \xi \leq \E[X]$, we have
\begin{align*}
    \P\Big[\big|X-\E[X]\big| \geq \xi + 60\gamma\sqrt{r\E[X]}\Big]\leq 4\exp{\left(-\frac{\xi^2}{8\gamma^2r\E[X]}\right)}.
\end{align*}
\end{theo}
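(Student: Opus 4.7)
The plan is to derive Theorem~\ref{Talagrand} from Talagrand's convex distance isoperimetric inequality: for any event $A$ in a product probability space $\prod_{i=1}^n \Omega_i$ and any $u \geq 0$,
\[
    \P[A] \cdot \P[d_T(\cdot, A) \geq u] \,\leq\, e^{-u^2/4},
\]
where the convex distance is
\[
    d_T(\omega, A) \,\defeq\, \sup_{\substack{\alpha \in \R_{\geq 0}^n \\ \|\alpha\|_2 \leq 1}}\ \inf_{\omega' \in A}\ \sum_{i\,:\,\omega_i \neq \omega'_i} \alpha_i.
\]
This black-box inequality is proved by induction on $n$ and is the standard entry point for Talagrand-type bounds, so I would take it as given.

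The first step is to translate hypotheses (T1) and (T2) into a lower bound on $d_T$. Fix $s \geq 0$, set $A_s \defeq \{X \leq s\}$, and suppose $X(\omega) \geq s'$ for some $s' > s$. By (T2) applied at $\omega$, there is an index set $I$ with $|I| \leq r s'$ such that every point agreeing with $\omega$ on $I$ satisfies $X \geq s'$. Given $\omega' \in A_s$, define $\omega^\star$ by overwriting the coordinates of $\omega'$ indexed by $I$ with those of $\omega$; then $\omega^\star$ agrees with $\omega$ on $I$, so $X(\omega^\star) \geq s'$, while $\omega^\star$ differs from $\omega'$ only on $I \cap D$, where $D \defeq \{i : \omega_i \neq \omega'_i\}$. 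Hypothesis (T1) gives $s' - s \leq X(\omega^\star) - X(\omega') \leq \gamma\,|I \cap D|$, so $|I \cap D| \geq (s' - s)/\gamma$. Testing the definition of $d_T$ with $\alpha = \mathbf{1}_I/\sqrt{|I|}$ therefore yields
\[
    d_T(\omega, A_s) \,\geq\, \frac{s' - s}{\gamma \sqrt{|I|}} \,\geq\, \frac{s' - s}{\gamma \sqrt{r s'}}.
\]

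To finish, I would let $m$ denote a median of $X$ and invoke the convex distance inequality with $A = A_m$, so that $\P[A_m] \geq 1/2$. Combining this with the bound from Step~1 (setting $s = m$ and solving for $s'$ in terms of $u$) gives an upper-tail estimate of the form $\P[X \geq m + \Theta(\gamma u \sqrt{r m})] \leq 2 e^{-u^2/4}$. A parallel argument applied to $A = \{X \geq s\}$ (using (T2) at a witness $\omega' \in A$ and invoking a minimax duality for $d_T$ to handle the fact that the certifying set now depends on $\omega'$) bounds the lower tail. Integrating both tails yields $|\E[X] - m| = O(\gamma\sqrt{r\E[X]})$, which lets us replace $m$ by $\E[X]$ and $\sqrt{r m}$ by $\sqrt{r\E[X]}$ at the cost of an additive $O(\gamma \sqrt{r \E[X]})$ correction; this is precisely the role of the $60\gamma\sqrt{r\E[X]}$ term in the statement.

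The main obstacle is bookkeeping rather than ideas: the bound from Step~1 is self-referential, since the required inequality $s' - s \geq \gamma u \sqrt{r s'}$ involves $s'$ on both sides. One handles this either by a short bootstrap (first use a crude bound to restrict attention to the event $X \lesssim \E[X]$, then tighten on that event) or by a dyadic decomposition of the range of $X$ with a union bound over shells. Executing this carefully, together with the median-to-mean integration and the lower-tail variant, is what produces the explicit constants $8$ and $60$ in the final form of the inequality; no genuinely new ingredient beyond the convex distance inequality is required.
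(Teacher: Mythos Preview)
The paper does not prove this theorem; it is quoted in the Preliminaries section as a known result from \cite[\S10.1]{MolloyReed} and used as a black box. So there is no ``paper's own proof'' to compare against.

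That said, your outline is the standard derivation and is essentially what appears in Molloy--Reed: start from Talagrand's convex distance inequality, use the certificate set $I$ together with the Lipschitz condition to lower bound $d_T(\omega, A_s)$ via the weight vector $\mathbf{1}_I/\sqrt{|I|}$, deduce two-sided concentration around the median, and then pass from median to mean by integrating the tails. The one genuinely delicate point you flag---that for the lower tail the certificate set depends on the point $\omega'$ inside $A$ rather than on the point $\omega$ whose distance you are bounding---is real, and your appeal to the minimax/measure formulation of $d_T$ is the right fix. The self-referential nature of the bound $s' - s \geq \gamma u \sqrt{r s'}$ is also handled in the literature exactly as you describe, by a bootstrap or by working on the event $\{X \leq C\,\E[X]\}$. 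Nothing is missing conceptually; the remaining work is indeed just tracking constants to arrive at $8$ and $60$.
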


The second version of Talagrand's inequality we will use was developed by Bruhn and Joos \cite{ExceptionalTal}. We refer to it as \emph{Exceptional Talagrand's Inequality}. In this version, we are allowed to discard a small ``exceptional'' set of outcomes before constructing certificates.

\begin{theo}[{Exceptional Talagrand's Inequality \cite[Theorem 12]{ExceptionalTal}}]\label{ExceptionalTalagrand}
    Let $X$ be a non-negative random variable, not identically 0, which is a function of $n$ independent trials $T_1$, \ldots, $T_n$, and let $\Omega$ be the set of outcomes for these trials. Let $\Omega^* \subseteq \Omega$ be a measurable subset, which we shall refer to as the \emphd{exceptional set}. Suppose that $X$ satisfies the following for some $\gamma>1$, $s>0$:
    \begin{enumerate}[label=\ep{\normalfont{}ET\arabic*}]
        \item\label{item:ET1} For all $q>0$ and every outcome $\omega \notin \Omega^*$, there is a set $I$ of at most $s$ trials such that $X(\omega') > X(\omega) - q$ whenever $\omega' \not\in \Omega^*$ differs from $\omega$ on fewer than $q/\gamma$ of the trials in $I$. 
        \item\label{item:ET2} $\P(\Omega^*) \leq M^{-2}$, where $M = \max\{\sup X, 1\}$.
    \end{enumerate}
    Then for every $\xi > 50\gamma\sqrt{s}$, we have:
    \begin{align*}
        \P\Big[\big|X - \E[X]\big| \geq \xi\Big] \leq 4\exp{\left(-\frac{\xi^2}{16\gamma^2s}\right)} + 4\P(\Omega^*).
    \end{align*}
\end{theo}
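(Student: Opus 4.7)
The plan is to adapt the classical proof of Talagrand's inequality via the convex distance method, suitably modified to accommodate the exceptional set $\Omega^*$. Recall that for a product probability space $\Omega = \prod_i \Omega_i$ and any measurable $A \subseteq \Omega$, the \emph{Talagrand convex distance} from $\omega$ to $A$ is
\[
    d_T(A, \omega) \defeq \sup_{\alpha \geq 0,\, \|\alpha\|_2 \leq 1}\ \inf_{\omega' \in A}\,\sum_i \alpha_i\,\mathbf{1}[\omega_i \neq \omega'_i],
\]
and Talagrand's fundamental inequality states $\P(A)\,\P[d_T(A,\cdot) \geq \tau] \leq \exp(-\tau^2/4)$ for every $\tau > 0$. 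The strategy is to first establish concentration of $X$ around a suitable quantile $m$ on $\Omega \setminus \Omega^*$, and then pass from $m$ to $\E[X]$ using the bound on $\P(\Omega^*)$.

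For the upper tail, I would choose $m$ so that $B_- \defeq \set{\omega \notin \Omega^* : X(\omega) \leq m}$ satisfies $\P(B_-) \geq 1/4$; this is possible since we may assume $M \geq 2$ \ep{else the theorem is vacuous}, which forces $\P(\Omega^*) \leq 1/4$. Now fix any $\omega \notin \Omega^*$ with $X(\omega) \geq m + \xi$. Applying hypothesis \ref{item:ET1} with $q \defeq \xi$ yields a set $I$ with $|I| \leq s$ such that every $\omega' \notin \Omega^*$ differing from $\omega$ on fewer than $\xi/\gamma$ trials in $I$ satisfies $X(\omega') > m$, and hence lies outside $B_-$. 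Plugging the unit vector $\alpha_i \defeq |I|^{-1/2}\,\mathbf{1}[i \in I]$ into the definition of $d_T$ then forces $d_T(B_-, \omega) \geq \xi/(\gamma\sqrt{s})$. Talagrand's convex distance inequality applied to $B_-$ therefore yields
\[
    \P[X \geq m + \xi,\ \omega \notin \Omega^*] \ \leq\ \P[d_T(B_-,\cdot) \geq \xi/(\gamma\sqrt{s})] \ \leq\ \frac{1}{\P(B_-)}\exp\!\left(-\frac{\xi^2}{4\gamma^2 s}\right),
\]
which is at most $4\exp(-\xi^2/(4\gamma^2 s))$. The lower tail is handled symmetrically with $B_+ \defeq \set{\omega \notin \Omega^* : X(\omega) \geq m'}$ for an analogous upper quantile $m'$.

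To replace $m$ by $\E[X]$, one estimates $|\E[X] - m| \leq M \cdot \P(\Omega^*) + \int_0^{M} \P[|X - m| \geq t,\ \omega \notin \Omega^*]\,dt \leq M^{-1} + O(\gamma\sqrt{s})$, where the first estimate uses $\P(\Omega^*) \leq M^{-2}$ and the integral estimate uses the tail bound just established. Since $\xi > 50\gamma\sqrt{s}$, this correction effectively replaces $\xi$ by a quantity of order $\xi/2$ in the exponent, producing the claimed constants $4\exp(-\xi^2/(16\gamma^2 s))$ after summing the two tails and adding the trivial $\P(\Omega^*)$ contribution from outcomes inside the exceptional set.

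The main technical obstacle is precisely this last calibration: one must track constants carefully through the passage from a median-type bound to a mean-type bound, and verify that the errors introduced by replacing $m$ with $\E[X]$ never overwhelm $\xi$. This is where the hypothesis $\P(\Omega^*) \leq M^{-2}$ is used in an essential way, since it guarantees $M \cdot \P(\Omega^*) \leq M^{-1}$, so that the potentially enormous values of $X$ attained on the exceptional set contribute only a bounded shift to $\E[X]$, no matter how large $M$ may be. A secondary subtlety is that the certifying set $I$ in \ref{item:ET1} is allowed to depend on $\omega$ \ep{and on $q$}, so one must check that the convex-distance argument does not require $I$ to be fixed globally; this is fine because $d_T(B_-, \omega)$ is evaluated pointwise in $\omega$.
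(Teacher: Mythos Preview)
The paper does not prove this statement; it is quoted without proof from Bruhn and Joos \cite{ExceptionalTal} as a preliminary tool, so there is no in-paper argument to compare your proposal against. Your sketch---reducing to Talagrand's convex distance inequality, using the certificate set $I$ from \ref{item:ET1} to lower-bound $d_T$ pointwise, and then passing from a quantile to the mean via the hypothesis $\P(\Omega^*)\leq M^{-2}$---is the standard route and is essentially how Bruhn and Joos proceed, so it is a reasonable outline even though the paper itself simply cites the result.
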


Finally, we shall use the K\H{o}v\'ari--Sós--Turán theorem for $K_{s,t}$-free graphs:

\begin{theo}[K\H{o}v\'ari--Sós--Turán \cite{KST}; see also \cite{KST2}]\label{KST}
    Let $G$ be a bipartite graph with a bipartition $V(G) = X \sqcup Y$, where $|X| = m$, $|Y| = n$, and $m \geq n$. Suppose that $G$ does not contain a complete bipartite subgraph with $s$ vertices in $X$ and $t$ vertices in $Y$. 
    Then
    $|E(G)| \leq s^{1/t} m^{1-1/t} n + tm$.
\end{theo}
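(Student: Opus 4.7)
The plan is to prove Theorem~\ref{KST} by the classical ``cherry-counting'' double count. I would consider pairs $(x, T)$ with $x \in X$, $T \subseteq N(x) \cap Y$, and $|T| = t$, and count them in two ways. Counting by $x$ first yields $\sum_{x \in X}\binom{d(x)}{t}$, where $d(x)$ denotes the degree of $x$. Counting by $T$ first and using the $K_{s,t}$-freeness hypothesis --- each $t$-subset $T \subseteq Y$ has at most $s-1$ common neighbors in $X$, else those common neighbors together with $T$ would span a forbidden $K_{s,t}$ --- yields at most $(s-1)\binom{n}{t}$. Combining,
\[
    \sum_{x \in X}\binom{d(x)}{t} \;\leq\; (s-1)\binom{n}{t} \;\leq\; s \cdot \frac{n^t}{t!}.
\]

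To extract the stated edge bound, I would separate low-degree vertices: set $X^+ \defeq \{x \in X : d(x) \geq t\}$ and $m^+ \defeq |X^+|$. Vertices outside $X^+$ contribute at most $(t-1)(m - m^+) \leq (t-1)m$ edges in total. For $x \in X^+$, the elementary inequality $\binom{d(x)}{t} \geq (d(x) - t + 1)^t / t!$ applies, so cancelling $t!$ gives $\sum_{x \in X^+}(d(x) - t + 1)^t \leq s\, n^t$. Applying Jensen's inequality to the convex function $z \mapsto z^t$ on $[0, \infty)$ and writing $\sigma \defeq \sum_{x \in X^+}(d(x) - t + 1) \geq |E(G)| - (t-1)m$, we obtain $\sigma^t / (m^+)^{t-1} \leq s\, n^t$, hence $\sigma \leq s^{1/t}\, n\, (m^+)^{1 - 1/t} \leq s^{1/t}\, m^{1-1/t}\, n$. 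Putting the pieces together gives $|E(G)| \leq s^{1/t} m^{1-1/t} n + (t-1)m \leq s^{1/t} m^{1-1/t} n + tm$, which is the desired inequality (even slightly stronger, as $(t-1)m$ suffices).

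Since this is a textbook result, I do not foresee any serious obstacle. The only mild subtlety is that $d \mapsto \binom{d}{t}$ is not uniformly convex on $[0, \infty)$ for $t \geq 4$, which is why I apply Jensen to $z \mapsto z^t$ only after separating the low-degree vertices $X \setminus X^+$; this keeps the book-keeping transparent and avoids any concern about discrete Jensen or the non-convex portion of $\binom{d}{t}$. The hypothesis $m \geq n$ is never used in this argument --- it enters only to make the stated bound the ``right'' one (the roles of the sides can of course be swapped).
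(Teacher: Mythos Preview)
Your proof is correct; it is the standard double-counting (``cherry-counting'') proof of the K\H{o}v\'ari--S\'os--Tur\'an theorem, and all the bookkeeping (separating low-degree vertices, the inequality $\binom{d(x)}{t} \geq (d(x)-t+1)^t/t!$ for $d(x) \geq t$, and Jensen applied to $z \mapsto z^t$) is handled cleanly. Note, however, that the paper does not supply its own proof of this statement: Theorem~\ref{KST} is quoted from the literature (with references) as a black-box tool used later in Claim~\ref{sadBound}, so there is nothing in the paper to compare your argument against.
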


\section{The Wasteful Coloring Procedure}\label{sectionOutline}
To prove Theorem \ref{mainTheorem}, we will start by showing we can produce a partial $\mathcal{H}$-coloring of our graph with desirable properties. Before we do so, we introduce some notation used in the next lemma. When $\phi$ is a partial $\mathcal{H}$-coloring of $G$, we define $L_\phi(v) \defeq \{c \in L(v) \,:\, N_H(c) \cap \im(\phi) = \0 \}$.
Given parameters $d$, $\ell$, $\eta$, $\beta > 0$, we define the following functions:
\begin{align*}
    \keep(d, \ell, \eta) &\defeq \left(1 - \frac{\eta}{\ell}\right)^{d}, \\
    \uncolor(d, \ell, \eta) &\defeq 1  - \eta \, \keep(d, \ell, \eta),\\
    \ell'(d, \ell, \eta, \beta) &\defeq \keep(d, \ell, \eta)\, \ell- \ell^{1 - \beta}, \\
    d'(d, \ell, \eta, \beta) &\defeq \keep(d, \ell, \eta)\, \uncolor(d, \ell, \eta)\, d + d^{1 - \beta}.
\end{align*}
The meaning of this notation will become clear when we describe the randomized coloring procedure we use to prove the following lemma.

\begin{Lemma}\label{iterationTheorem} 
    There are $\tilde{d} \in \N$, $\tilde{\alpha} > 0$ with the following property. Let $\eta >0$, $d$, $\ell$, $s$, $t \in \N$  satisfy:
    \begin{enumerate}[label=\ep{\normalfont\arabic*}]
        \item $d \geq \tilde{d}$,
        \item $\eta\, d < \ell < 8d$,
        \item $s \leq d^{1/4}$,
        \item $t  \leq \dfrac{\tilde{\alpha}\log d}{\log \log d}$, 
        \item $\dfrac{1}{\log^5d} < \eta < \dfrac{1}{\log d}.$
    \end{enumerate}Then whenever $G$ is a graph and $\mathcal{H} = (L, H)$ is a DP-cover of $G$ such that
    \begin{enumerate}[label=\ep{\normalfont\arabic*},resume]
        \item $H$ is $K_{s,t}$-free,
        \item $\Delta(H) \leq d$,
        \item $|L(v)| \geq \ell$ for all $v \in V(G)$,
    \end{enumerate}
    there exists a partial proper $\mathcal{H}$-coloring $\phi$ and 
    an assignment of subsets $L'(v) \subseteq L_\phi(v)$ to each $v \in V(G) \setminus \mathrm{dom}(\phi)$ such that, setting
    \[
        G' \defeq G\left[V(G)\setminus \mathrm{dom}(\phi)\right] \quad \text{and} \quad H' \defeq H\left[\bigcup_{v \in V(G')} L'(v)\right],
    \]
    we get that for all $v \in V(G')$, $c \in L'(v)$, and $\beta = 1/(25t)$:
    \begin{align*}
        |L'(v)| \,\geq\, \ell'(d, \ell, \eta, \beta) \quad \text{and} \quad \deg_{H'}(c) \,\leq\, d'(d, \ell, \eta, \beta).
    \end{align*}
\end{Lemma}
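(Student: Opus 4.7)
I would execute one iteration of the R\"odl/Kim-style ``nibble'' adapted to DP-coloring: define a randomized partial proper $\mathcal{H}$-coloring $\phi$, compute the expectations of $|L'(v)|$ and $\deg_{H'}(c)$, establish concentration for each, and combine the resulting bad events via the Lov\'asz Local Lemma. The residual-degree concentration is where the $K_{s,t}$-free hypothesis enters and is the real crux of the argument.

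\emph{The procedure and expectations.} Independently activate each color $c \in V(H)$ with probability $\eta/\ell$, and say $c$ is \emphd{kept} if no $H$-neighbor of $c$ is activated. A vertex $v$ is assigned the unique activated color in $L(v)$ whenever there is exactly one such color and it is kept, with a small equalizing coin flip (standard in this setting) ensuring that the unconditional probability a fixed color $c \in L(v)$ is assigned equals $(\eta/\ell)\,\keep(d,\ell,\eta)$. Let $\phi$ be the resulting partial coloring, which is proper by construction, and for $v \in V(G') \defeq V(G) \setminus \dom(\phi)$ let $L'(v)$ consist of the kept colors in $L_\phi(v)$. A direct calculation yields $\E[|L'(v)|] \geq \keep(d,\ell,\eta)\cdot\ell$. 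For $c \in L'(v)$, each neighbor $c' \in N_H(c)$ contributes to $\deg_{H'}(c)$ only if its underlying vertex is uncolored \emph{and} $c'$ is kept, giving $\E[\deg_{H'}(c)] \leq \keep(d,\ell,\eta)\cdot\uncolor(d,\ell,\eta)\cdot d$.

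\emph{Concentration.} The list size $|L'(v)|$ is a sum of $|L(v)|$ indicators, each a function of $O(d)$ independent trials with bounded influence and constant-size certificates, so Talagrand's inequality (Theorem~\ref{Talagrand}) directly delivers $|L'(v)| \geq \keep(d,\ell,\eta)\cdot\ell - \ell^{1-\beta}$ with probability $1 - \exp(-\ell^{\Omega(1)})$. The residual degree $\deg_{H'}(c)$ is much more delicate: activating a single color in the second $H$-neighborhood of $c$ can simultaneously spoil many candidates for $N_{H'}(c)$, so the naive Lipschitz constant is too large for Theorem~\ref{Talagrand} alone. This is exactly where $K_{s,t}$-freeness matters --- through Lemma~\ref{degreeConcentration}, whose proof is developed in \S\ref{sectionProofOfConcentration}. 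The strategy is to apply Exceptional Talagrand (Theorem~\ref{ExceptionalTalagrand}) with an exceptional set $\Omega^*$ recording the rare outcomes in which many activated colors lie in a ``heavy'' shared second neighborhood of $c$; K\H{o}v\'ari--S\'os--Tur\'an (Theorem~\ref{KST}) bounds the number of such heavy pairs in any $K_{s,t}$-free graph, forcing $\P(\Omega^*) = d^{-\omega(1)}$ under our hypotheses on $s$ and $t$. Outside $\Omega^*$ the certificate condition~\ref{item:ET1} is verified with certificates of size polynomial in $s$ and $t$, yielding $\deg_{H'}(c) \leq \keep(d,\ell,\eta)\cdot\uncolor(d,\ell,\eta)\cdot d + d^{1-\beta}$ with probability $1 - d^{-\omega(1)}$.

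\emph{Local Lemma and main obstacle.} For each $v \in V(G)$ let $A_v$ be the event $|L'(v)| < \ell'(d,\ell,\eta,\beta)$, and for each $c \in V(H)$ let $B_c$ be the event $\deg_{H'}(c) > d'(d,\ell,\eta,\beta)$. Each such bad event is determined by the trials on colors within $H$-distance at most $3$ of $v$ or $c$, so it is mutually independent of all but $\mathrm{poly}(d)$ other bad events. The concentration bounds above give every $\P(A_v), \P(B_c) \leq d^{-\omega(1)}$, and the symmetric Lov\'asz Local Lemma (Theorem~\ref{LLL}) then furnishes an outcome in which no bad event occurs. The real work --- and the main obstacle --- lies entirely in the residual-degree concentration: one must simultaneously verify condition~\ref{item:ET1} outside $\Omega^*$ and keep $\P(\Omega^*)$ small, under the combined constraints $s \leq d^{1/4}$ and $t \leq \tilde{\alpha}\log d/\log\log d$. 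The exponent $\beta = 1/(25t)$ is tuned precisely to make this tradeoff tight.
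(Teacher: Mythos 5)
Your high-level plan — a single nibble iteration, expectation computations, concentration via Talagrand-type inequalities, and the Local Lemma to remove bad events — is the same architecture the paper uses. But there are two substantive deviations worth flagging.

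First, your coloring procedure differs from the paper's. You activate \emph{colors} independently with probability $\eta/\ell$ and assign a vertex its unique activated, kept color, using equalizing coin flips to normalize the assignment probability. The paper instead activates \emph{vertices} with probability $\eta$ and has each activated vertex choose a uniformly random color from its list; a color is kept if none of its $H$-neighbors is the chosen color of an activated vertex. The paper then entirely avoids equalizing coin flips via Proposition~\ref{regular}: by embedding $\mathcal{H}$ in a $d$-regular, still $K_{s,t}$-free supercover, every color has exactly the same probability of being kept, so no artificial rebalancing is needed. The authors explicitly highlight this as a technical simplification afforded by the DP-coloring framework, and your route would have to handle the degree-dependence of $\keep$ by hand. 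Also, your claim that the unconditional assignment probability is $(\eta/\ell)\keep$ needs care: with color activation, ``$c$ is the unique activated color in $L(v)$'' has probability $(\eta/\ell)(1-\eta/\ell)^{\ell-1}$, which is strictly less than $\eta/\ell$, so a downward equalizing flip cannot raise it; you would need to either rescale or change the procedure.

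Second, your sketch of the degree-concentration step mischaracterizes how $K_{s,t}$-freeness enters. You suggest that K\H{o}v\'ari--S\'os--Tur\'an forces $\P(\Omega^*)$ to be small and that the certificates have size polynomial in $s$ and $t$. Neither is what happens. In the paper, the exceptional set $\Omega^*$ is a purely probabilistic event (some color seeing too many activated neighbors) bounded by a Chernoff-type union bound, with no KST involved. KST is instead applied \emph{deterministically}: after classifying colors in $N_H^2(c)$ as good/bad (by degree into $N_H(c)$) and colors in $N_H(c)$ as happy/sad (by number of bad neighbors), Claim~\ref{sadBound} uses KST on the bipartite $\Bad$-versus-$N_H(c)$ graph to show there are at most $d^{1-\beta_2}$ sad colors. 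The concentration is then proved by decomposing $d'(c) = |\mathcal U| - |\mathcal U\setminus\mathcal K|$, partitioning $\Happy$ into $\approx d^{1-\tau}$ blocks (Subclaim~\ref{partition}), and applying Exceptional Talagrand to each block with certificates of size $\Theta(d^\tau)$ and Lipschitz constant $\Theta(d^{\tau-\delta} + \log d)$ — these are small powers of $d$ depending on $t$, not $\mathrm{poly}(s,t)$. Without this decomposition and the partition, the Lipschitz constant is too large for Talagrand to close, so as stated your sketch has a real gap at the crux of the argument.
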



    


To prove Lemma \ref{iterationTheorem}, we will carry out a variant of the the ``Wasteful Coloring Procedure,'' as described in \cite[Chapter 12]{MolloyReed}. As mentioned in the introduction, essentially the same procedure was used by Kim \cite{Kim95} to bound the chromatic number of graphs of girth at least $5$. 
We describe this procedure in terms of DP-coloring below:\\

\begin{mdframed}
\textit{Wasteful Coloring Procedure}

\medskip

\noindent \textbf{Input:} A graph $G$ with a DP-cover $\mathcal{H} = (L,H)$ and a parameter $\eta \in [0,1]$.

\smallskip

\noindent \textbf{Output:} 
A proper partial $\mathcal{H}$-coloring $\phi$ and subsets $L'(v) \subseteq L_\phi(v)$ for all $v \in V(G) \setminus \dom(\phi)$. 

\medskip

\begin{enumerate}[wide]
    \item Generate a random subset $A \subseteq V(G)$ by putting each vertex $v \in V(G)$ into $A$ independently with probability $\eta$. The vertices in $A$ are said to be \emphdef{activated}. 
    \item Independently for each $v \in A$, pick a color $\col(v) \in L(v)$ uniformly at random. We say that $v$ is  \emphdef{assigned} the color $\col(v)$ and let $\col(A) \subseteq V(H)$ be the set of the assigned colors.
    \item For each vertex $v \in V(G)$, let
    \[
        K(v) \defeq \set{c \in L(v) \,:\, N_H(c) \cap \col(A) = \0}.
    \]
    We say that the colors in $K(v)$ are \emphd{kept} by $v$, and that $K \defeq \bigcup_{v \in V(G)}K(v)$ is the set of \emphdef{kept} colors. We also say that the colors in the set $V(H)\setminus K$ are \emphdef{removed}. 
    \item For each $v \in A$, if $\col(v) \in K(v)$, then we set $\phi(v) \defeq \col(v)$. For all other vertices, $\phi(v)$ is undefined, so we set $\phi(v) \defeq \mathsf{blank}$. We call such vertices \emphdef{uncolored}. 
    \item For each $v \in V(G) \setminus \dom(\phi)$, we let $L'(v) \defeq K(v)$.
\end{enumerate}
\end{mdframed}
\hspace{2pt}

In \S\S\ref{sectionProofOfIterationTheorem} and \ref{sectionProofOfConcentration}, we will show that, with positive probability, the output of the Wasteful Coloring Procedure satisfies the conclusion of Lemma~\ref{iterationTheorem}. With this procedure in mind, we can now provide an intuitive understanding for the functions defined in the beginning of this section. Suppose $G$, $\mathcal{H} = (L,H)$ satisfy $|L(v)| = \ell$ and $\Delta(H) = d$. If we run the Wasteful Coloring Procedure with these $G$ and $\mathcal{H}$, then $\keep(d,\ell, \eta)$ is the probability that a color $c \in L(v)$ is kept by $v$ \ep{i.e., $c \in K(v)$}, 
while $\uncolor(d, \ell, \eta)$ is approximately the probability that a vertex $v \in V(G)$ is uncolored \ep{i.e., $\phi(v) = \mathsf{blank}$}. The details of these calculations are given in \S\ref{sectionProofOfIterationTheorem}. 
Note that, assuming the terms $\ell^{1-\beta}$ and $d^{1-\beta}$ in the definitions of $\ell'(d, \ell, \eta, \beta)$ and $d'(d, \ell, \eta, \beta)$ are small, we can write
\[
    \frac{d'(d, \ell, \eta, \beta)}{\ell'(d, \ell, \eta, \beta)} \,\approx\, \uncolor(d, \ell, \eta) \frac{d}{\ell}.
\]
In other words, an application of Lemma~\ref{iterationTheorem} reduces the ratio $d/\ell$ roughly by a factor of $\uncolor(d, \ell, \eta)$. In \S\ref{sectionIterations} we will show that Lemma~\ref{iterationTheorem} can be applied iteratively to eventually make the ratio $d/\ell$ less than, say, $1/8$, after which the coloring can be completed using the following proposition:

\begin{prop}\label{finalBlow}
Let $G$ be a graph with a $DP$-cover $\mathcal{H} = (L,H)$ such that $|L(v)| \geq 8d$ for every $v \in V(G)$, where $d$ is the maximum degree of $H$. Then, there exists a proper $\mathcal{H}$-coloring of $G$.
\end{prop}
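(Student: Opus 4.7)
The plan is to apply the symmetric Lovász Local Lemma (Theorem~\ref{LLL}). Since a proper coloring for a smaller cover remains proper for the original one, I first pass to the case $|L(v)| = 8d$ exactly, replacing each $L(v)$ by an arbitrary subset $L'(v) \subseteq L(v)$ of size $8d$ and setting $H' \defeq H\big[\bigcup_{v \in V(G)} L'(v)\big]$. Then $\mathcal{H}' \defeq (L', H')$ is a DP-cover of $G$ with $\Delta(H') \leq d$, and any proper $\mathcal{H}'$-coloring is proper for $\mathcal{H}$. This shrinking step is important: without it the dependency count below would grow with $|L(v)|$ and the LLL inequality would fail.

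Independently for each $v \in V(G)$, choose $\phi(v) \in L'(v)$ uniformly at random. It is convenient to index the bad events by the edges of $H'$ rather than of $G$, which bounds dependencies in terms of $\Delta(H')$ instead of $\Delta(G)$. For every edge $e = cc' \in E(H')$ with $c \in L'(u)$ and $c' \in L'(v)$, let $B_e$ be the event that $\phi(u) = c$ and $\phi(v) = c'$; then $\phi$ is a proper $\mathcal{H}'$-coloring if and only if none of the events $B_e$ occurs. Each such event satisfies
\[
    \P[B_e] \,=\, \frac{1}{|L'(u)|\,|L'(v)|} \,=\, \frac{1}{64 d^2}.
\]

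The event $B_e$ depends only on $\phi(u)$ and $\phi(v)$, so it is mutually independent from every $B_{e'}$ whose edge $e'$ has no endpoint in $L'(u) \cup L'(v)$. Since $L'(u)$ and $L'(v)$ are disjoint, $|L'(u) \cup L'(v)| = 16d$, and every vertex in that set has degree at most $d$ in $H'$; hence the number of edges of $H'$ with at least one endpoint in $L'(u) \cup L'(v)$ is at most $16 d \cdot d = 16 d^2$. Setting $p \defeq 1/(64 d^2)$ and $d_{LLL} \defeq 16 d^2$, we obtain $4 p\, d_{LLL} \leq 1$, so Theorem~\ref{LLL} produces a proper $\mathcal{H}'$-coloring, and therefore a proper $\mathcal{H}$-coloring, with positive probability.

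I do not anticipate a substantive obstacle; this is a routine ``finishing'' step of the sort that appears in many nibble-based coloring proofs. The only point needing any care is the initial shrinking of the lists---without it, the dependency count could be on the order of $d \cdot |L(v)|$, which is not controllable from the hypothesis $|L(v)| \geq 8d$ alone.
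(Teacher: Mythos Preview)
Your argument is correct and is exactly the standard Lov\'asz Local Lemma approach the paper alludes to (and defers to \cite[Appendix]{JMTheorem}): shrink lists to size $8d$, choose colors uniformly at random, index bad events by edges of $H'$, and check $4\cdot\frac{1}{64d^2}\cdot 16d^2 \leq 1$. There is nothing to add.
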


This proposition is standard and proved using the Lov\'asz Local Lemma. Its proof in the DP-coloring framework can be found, e.g., in \cite[Appendix]{JMTheorem}. 



For the reader already familiar with some of the applications the ``R\"odl Nibble'' method to graph coloring problems, let us make a comment about one technical feature of our Wasteful Coloring Procedure. For the analysis of constructions of this sort, it is often beneficial to assume that every color has the same probability of being kept. It is clear, however, that in our procedure the probability that a color $c \in V(H)$ is kept depends on the degree of $c$ in $H$: the larger the degree, the higher the chance that $c$ gets removed. The usual way of addressing this issue is by introducing additional randomness in the form of ``equalizing coin flips'' that artificially increase the probability of removing the colors of low degree. \ep{See, for example, the procedure in \cite[Chapter 12]{MolloyReed}.} However, it turns out that we can avoid the added technicality of dealing with equalizing coin flips by leveraging the generality of the DP-coloring framework. Namely, by replacing $H$ with a supergraph, we may always arrange $H$ to be $d$-regular \ep{see Proposition~\ref{regular}}. This allows us to assume that every color has the same probability of being kept, even without extra coin flips. This way of simplifying the analysis of probabilistic coloring constructions was introduced by Bonamy, Perrett, and Postle in \cite{GraphEmbedding} and nicely exemplifies the benefits of working with DP-colorings compared to the classical list-coloring setting.

\section{Proof of Lemma \ref{iterationTheorem}}\label{sectionProofOfIterationTheorem}

In this section, we present the proof of Lemma \ref{iterationTheorem}, apart from one technical lemma that will be established in \S\ref{sectionProofOfConcentration}. We start with the following proposition which allows us to assume that the given DP-cover of $G$ is $d$-regular. 

\begin{prop}\label{regular}
    Let $G$ be a graph and $(L,H)$ be a DP-cover of $G$ such that $\Delta(H) \leq d$ and $H$ is $K_{s,t}$-free for some $d$, $s$, $t \in \N$. Then there exist a graph $G^*$ and a DP-cover $(L^*, H^*)$ of $G^\ast$ such that the following statements hold:
    \begin{itemize}
        \item $G$ is a subgraph of $G^*$,
        \item $H$ is a subgraph of $H^*$,
        \item for all $v \in V(G)$, $L^\ast(v) = L(v)$,
        \item $H^*$ is $K_{s,t}$-free,
        \item $H^*$ is $d$-regular.
    \end{itemize}
\end{prop}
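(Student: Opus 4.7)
The idea is to take many disjoint copies of $(G,L,H)$ and link corresponding copies of each vertex via carefully designed ``monochromatic'' cross-copy matchings that raise every deficient color-degree to exactly $d$. I would fix a sufficiently large $N$ \ep{polynomial in $d$, $s$, $t$} and form $G^\ast$ as the disjoint union of $N$ copies of $G$, labeled $G_0 = G, G_1, \ldots, G_{N-1}$, together with the extra edges $v_iv_j$ for every $v \in V(G)$ and $i \neq j$, so that the $N$ copies of each vertex form an $N$-clique in $G^\ast$. Write $c_i$ for the image of $c \in V(H)$ in the $i$-th copy, set $L^\ast(v_i) \defeq L_i(v)$, and for each color $c$ let $\delta_c \defeq d - \deg_H(c)$ and fix a $\delta_c$-regular graph $R_c$ on the index set $\{0,1,\ldots,N-1\}$. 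The edge set of $H^\ast$ is then the disjoint union of the edges inside each $H_i$ together with the cross-copy edges $c_ic_j$ for every $c \in V(H)$ and every $ij \in R_c$. Every color $c_i$ now has $\deg_H(c) + \deg_{R_c}(i) = d$ neighbors, the matching condition on each clique edge $v_iv_j$ is automatic \ep{each $c \in L(v)$ contributes at most the single edge $c_ic_j$}, and identifying $G_0$ with $G$ gives $G \subseteq G^\ast$, $H \subseteq H^\ast$, and $L^\ast|_{V(G)} = L$.

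\textbf{$K_{s,t}$-freeness.} Every edge of $H^\ast$ is either \emph{in-copy} \ep{an edge of some $H_i\cong H$} or \emph{monochromatic cross-copy}. I would then argue that any copy of $K_{s,t}$ in $H^\ast$ with parts $A$, $B$ collapses into one of two ``pure'' cases---either $A\cup B$ is contained in a single copy $H_i$, or $A\cup B$ uses a single color $c$---by a short case analysis on the set of colors and copy-indices used. Taking two vertices $(c_1,i_1),(c_2,i_2)\in A$ with $c_1\neq c_2$ and chasing which $(c',j)\in B$ can be simultaneously adjacent to both, one finds that, apart from a small residual ``mixed'' configuration, every $b\in B$ is forced either to share a copy with all of $A$ \ep{collapsing to the single-copy case} or to have $c'\in\{c_1,c_2\}$ \ep{collapsing to a monochromatic-layer case}. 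The two pure cases are excluded by the hypothesis that $H$ is $K_{s,t}$-free and by choosing each $R_c$ to be $K_{s,t}$-free. For $s,t\geq 2$, the only residual mixed configuration is a $K_{2,2}$ whose existence requires both $c_1c_2\in H$ and a common index-pair in $R_{c_1}\cap R_{c_2}$; I eliminate these by additionally demanding that $R_c$ and $R_{c'}$ be edge-disjoint whenever $cc'\in H$.

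\textbf{Main obstacle.} The bulk of the work is producing, for $N$ polynomial in $d$, $s$, $t$, a family $\{R_c\}_{c\in V(H)}$ in which each $R_c$ is a $\delta_c$-regular $K_{s,t}$-free graph on $[N]$ and the $R_c$'s are edge-disjoint along every edge of $H$. To arrange edge-disjointness, I would first fix a proper vertex coloring $\kappa\colon V(H)\to\{1,\ldots,\Delta(H)+1\}$ of $H$ \ep{using Brooks' bound}, partition $E(K_N)$ into $\Delta(H)+1\leq d+1$ blocks, and require each $R_c$ to live inside the block assigned to $\kappa(c)$. Inside each such block, the required $\delta$-regular $K_{s,t}$-free graphs can then be built either by explicit algebraic extremal constructions \ep{incidence graphs of projective planes for $t=2$, norm graphs in general} or via a random regular graph construction combined with the Lov\'asz Local Lemma, provided $N$ is taken comfortably larger than $d^{t/(t-1)}|V(H)|$.
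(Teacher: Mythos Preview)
Your approach works: the monochromatic cross-copy edges give a valid $d$-regular DP-cover, and the case analysis (once tightened---if neither part of a putative $K_{s,t}$ is single-copy or single-color, one can pick $a_1=(c_1,i_1)$, $a_2=(c_2,i_2)$ on one side with $c_1\neq c_2$ and $i_1\neq i_2$, forcing the other side into $\{(c_2,i_1),(c_1,i_2)\}$, so by symmetry both sides have size at most $2$) leaves only the residual $K_{2,2}$ you identify, which is indeed killed by the edge-disjointness of $R_{c_1}$ and $R_{c_2}$. The family $\{R_c\}$ is buildable concretely---for instance via a $1$-factorization of a $d(d{+}1)$-regular bipartite graph of girth at least $6$ on $[N]$, with $d$ matchings assigned to each of the $d{+}1$ color classes of a proper vertex coloring of $H$---so $N$ need only be polynomial in $d$, with no dependence on $|V(H)|$.

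The paper's route is shorter and sidesteps your ``main obstacle'' entirely. Rather than per-color auxiliary graphs on the index set, it uses a \emph{single} $N$-regular graph $\Gamma$ of girth at least $5$ whose vertices index the copies, where $N=\sum_c(d-\deg_H(c))$ is the total deficiency; for each edge $ij\in E(\Gamma)$ it adds exactly one cross-copy edge, joining an arbitrary still-deficient color in copy $i$ to one in copy $j$. Since $\Gamma$ is $N$-regular, each copy accrues exactly $N$ new incidences, making $H^*$ $d$-regular. Crucially, at most one cross-copy edge runs between any two copies, so any $K_{2,2}$ in $H^*$ not confined to a single copy would project to a cycle of length at most $4$ in $\Gamma$; hence every $K_{2,2}$---and therefore every $K_{s,t}$ with $s,t\geq 2$---lies in a single copy. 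There are no edge-disjointness constraints to arrange and no per-color constructions to carry out.
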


\begin{proof}
Set $N = \sum_{c \in V(H)}(d-\deg_H(c))$ and let $\Gamma$ be an $N$-regular graph with girth at least $5$. \ep{Such $\Gamma$ exists by \cite{GirthRegular1, GirthRegular2}.} 
Without loss of generality, we may assume that $V(\Gamma) = \set{1,\ldots,k}$, where $k \defeq |V(\Gamma)|$. Take $k$ vertex-disjoint copies of $G$, say $G_1$, \ldots, $G_k$, and let $(L_i, H_i)$ be a DP-cover of $G_i$ isomorphic to $(L,H)$. 
Define $X_i \defeq \{c\in V(H_i)\,:\, \deg_{H_i}(c) < d\}$ for every $1\leq i \leq k$. The graphs $G^\ast$ and $H^\ast$ are obtained from the disjoint unions of $G_1$, \ldots, $G_k$ and $H_1$, \ldots, $H_k$ respectively by performing the following sequence of operations once for each edge $ij \in E(\Gamma)$, one edge at a time:
\begin{enumerate}
    \item Pick arbitrary vertices $c \in X_i$ and $c' \in X_j$.
    \item Add the edge $cc'$ to $E(H^*)$ and the edge $L_i^{-1}(c)L_j^{-1}(c')$ to $E(G^*)$.
    \item If $\deg_{H^*}(c) = d$, remove $c$ from $X_i$.
    \item If $\deg_{H^*}(c') = d$, remove $c'$ from $X_j$.
\end{enumerate}
Since $\Gamma$ is $N$-regular, throughout this process the sum $\sum_{c \in V(H_i)} (d - \deg_{H^\ast}(c))$ decreases exactly $N$ times, which implies that the resulting graph $H^\ast$ is $d$-regular. 
Furthermore, since $\Gamma$ has girth at least $5$ and $H$ is $K_{s,t}$-free, $H^*$ is also $K_{s,t}$-free. 
Hence, if we define $L^*\colon V(G^*) \to 2^{V(H^*)}$ so that $L^*(v) = L_i(v)$ for all $v \in V(G_i)$, 
then $(L^*,H^*)$ is a DP-cover of $G^*$ satisfying all the requirements. 
\end{proof}


Suppose $d$, $\ell$, $s$, $t$, $\eta$, and a graph $G$ with a DP-cover $\mathcal{H} = (L, H)$ satisfy the conditions of Lemma~\ref{iterationTheorem}. By removing some vertices from $H$ if necessary, we may assume that $|L(v)| = \ell$ for all $v \in V(G)$. Furthermore, by Proposition~\ref{regular}, we may assume that $H$ is $d$-regular. Since we may delete all the edges of $G$ whose corresponding matchings in $H$ are empty, we may also assume that $\Delta(G) \leq \ell d$. Suppose we have carried out the Wasteful Coloring Procedure with these $G$ and $\mathcal{H}$. As in the statement of Lemma~\ref{iterationTheorem}, we let
\[
        G' \defeq G[V(G)\setminus \mathrm{dom}(\phi)] \quad \text{and} \quad H' \defeq H\left[\bigcup_{v \in V(G')} L'(v)\right].
    \]
For each $v \in V(G)$ and $c \in V(H)$, we define the random variables \[\ell'(v) \,\defeq\, |K(v)| \quad \text{and} \quad d'(c) \,\defeq\, |N_H(c)\cap V(H')|.\] Note that if $v \in V(G')$, then $\ell'(v) = |L'(v)|$; similarly, if $c \in V(H')$, then $d'(c)= \deg_{H'}(c)$. As in Lemma~\ref{iterationTheorem}, we let $\beta \defeq 1/(25t)$. For the ease of notation, we will write $\keep$ to mean $\keep(d,\ell,\eta)$, $\uncolor$ to mean $\uncolor(d, \ell, \eta)$, etc. 
We will show, for $d$ large enough, that:

\begin{Lemma}\label{listExpectation}
    For all $v \in V(G)$, $\E[\ell'(v)] = \keep\, \ell$,
\end{Lemma}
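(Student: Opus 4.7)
The plan is a direct application of linearity of expectation together with the independence structure of the coloring procedure. Write
\[
\ell'(v) \,=\, |K(v)| \,=\, \sum_{c \in L(v)} \mathbf{1}\big[c \in K(v)\big],
\]
so that $\mathbb{E}[\ell'(v)] = \sum_{c \in L(v)} \mathbb{P}[c \in K(v)]$. Since the reductions made at the start of \S\ref{sectionProofOfIterationTheorem} (in particular, Proposition~\ref{regular} together with the trimming of $L$ to size $\ell$) ensure that $|L(v)| = \ell$ for every $v$, it is enough to establish that $\mathbb{P}[c \in K(v)] = \keep$ for each fixed color $c \in L(v)$.

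For such a fixed $c$, I would use the fact that $H$ is $d$-regular to enumerate the neighbors of $c$ in $H$ as $c_1, \dots, c_d$. The matching property $H[L(v) \cup L(u)]$ is a matching for every pair $v,u$ implies that $c$ has at most one neighbor in any single list, so there are distinct vertices $u_1, \dots, u_d \in V(G) \setminus \{v\}$ (the exclusion of $v$ follows from $L(v)$ being independent in $H$) with $c_i \in L(u_i)$. The event $\{c \in K(v)\}$ is then precisely the event that $\col(A) \cap \{c_1, \dots, c_d\} = \emptyset$, i.e., that none of the $c_i$ are assigned.

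For each $i$, the event $A_i \defeq \{c_i \in \col(A)\}$ occurs iff $u_i$ is activated and $\col(u_i) = c_i$, so
\[
\mathbb{P}[A_i] \,=\, \eta \cdot \frac{1}{|L(u_i)|} \,=\, \frac{\eta}{\ell}.
\]
Because the $u_i$ are distinct, the events $A_1, \dots, A_d$ depend on pairwise disjoint collections of independent trials (the activation coin flip and color choice at each $u_i$), and are therefore mutually independent. Consequently,
\[
\mathbb{P}[c \in K(v)] \,=\, \mathbb{P}\!\left[\bigcap_{i=1}^{d} \overline{A_i}\right] \,=\, \left(1 - \frac{\eta}{\ell}\right)^{d} \,=\, \keep.
\]
Summing over $c \in L(v)$ yields $\mathbb{E}[\ell'(v)] = \ell \cdot \keep$, as desired.

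This argument is essentially routine; there is no real obstacle. The only point that requires care is the justification that the assignment events $A_1, \dots, A_d$ are truly independent, and this is precisely what the matching structure of a DP-cover buys us. It is also worth emphasizing that the uniformity of the probability $\keep$ across all colors $c$ relies on the $d$-regularity reduction: without it, a lower-degree color would be more likely to be kept, complicating the computation — this is exactly the reason for invoking Proposition~\ref{regular}.
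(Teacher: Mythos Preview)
Your proof is correct and follows essentially the same approach as the paper: compute $\mathbb{P}[c\in K(v)]$ for a single color using independence across the distinct underlying vertices of $N_H(c)$, then apply linearity of expectation. Your write-up simply makes explicit the details (distinctness of the $u_i$ via the matching condition, mutual independence of the $A_i$) that the paper leaves implicit.
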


\begin{Lemma}\label{listSizeConcentration}
    For all $v \in V(G)$, 
    $\P\Big[\big|\ell'(v) - \E[\ell'(v)]\big| > \ell^{1-\beta}\Big] \leq d^{-100}$.
\end{Lemma}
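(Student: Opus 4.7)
My plan is to apply Talagrand's Inequality (Theorem~\ref{Talagrand}) to the complementary random variable $Y_v \defeq \ell - \ell'(v) = |L(v) \setminus K(v)|$, which counts the colors of $L(v)$ removed by the procedure. Because $|\ell'(v) - \E[\ell'(v)]| = |Y_v - \E[Y_v]|$, concentration of $Y_v$ gives what we want. The key reason for switching from $\ell'(v)$ to $Y_v$ is certification cost: a single removed color is witnessed by one trial (an activated vertex whose assigned color is the matching-partner of $c$), whereas witnessing that a color is kept would require inspecting up to $d$ trials. Applying Talagrand to $\ell'(v)$ directly would therefore give $r \sim d$, which is useless in the regime $\ell \sim d$.

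The independent trials are $T_u$ for $u \in V(G)$; each $T_u$ records whether $u \in A$ and, if so, the assigned color $\col(u) \in L(u)$. For (T1): since $H[L(u) \cup L(v)]$ is a matching, altering one trial $T_u$ affects the $K(v)$-status of at most two colors of $L(v)$ (the matching-partners of the old and new colors assigned at $u$); moreover, one of these can only enter $K(v)$ while the other can only leave, so $|Y_v|$ changes by at most $1$ and we take $\gamma = 1$. For (T2): if $Y_v \geq s$, pick any $s$ removed colors and, for each one $c$, a single trial $T_u$ with $\col(u) \sim c$; these $s$ trials certify $Y_v \geq s$, hence $r = 1$.

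The mean is $\E[Y_v] = \ell(1 - \keep) \leq \ell$, and using $1 - (1-x)^d \geq xd/2$ (valid for $xd \leq 1$, which is hypothesis~(2)) we also have $\E[Y_v] \geq \eta d/2 \geq d/(2\log^5 d)$. Setting $\xi \defeq \ell^{1-\beta}/2$, the hypothesis $\xi \leq \E[Y_v]$ of Theorem~\ref{Talagrand} reduces to $d^\beta \gtrsim \log^5 d$; since $\beta = 1/(25t) \geq \log\log d/(25\tilde\alpha\log d)$, we have $d^\beta \geq (\log d)^{1/(25\tilde\alpha)}$, which dominates $\log^5 d$ once $\tilde\alpha$ is chosen small enough. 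Theorem~\ref{Talagrand} then yields
\[
    \P\!\left[\,|Y_v - \E Y_v| \,\geq\, \tfrac{1}{2}\ell^{1-\beta} + 60\sqrt{\E Y_v}\,\right] \,\leq\, 4\exp\!\left(-\frac{\ell^{2-2\beta}}{32\,\E Y_v}\right).
\]
Since $60\sqrt{\E Y_v} \leq 60\sqrt{\ell} \leq \ell^{1-\beta}/2$ for $d$ large, the event on the left contains $\{\,|\ell'(v) - \E\ell'(v)| \geq \ell^{1-\beta}\,\}$; and $\E Y_v \leq \ell$ together with $\beta \leq 1/25$ makes the exponent at least $\ell^{1-2\beta}/32 \geq (d/\log^5 d)^{23/25}/32$, which dwarfs $100\log d$ for $d$ large. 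This gives the desired bound $d^{-100}$.

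The one genuine technical subtlety is verifying the hypothesis $\xi \leq \E[Y_v]$ in the regime where $d\eta/\ell$ is small (so $\E[Y_v]$ is only $\sim \eta d$ rather than $\sim \ell$); this is precisely what pins down the quantitative restriction $t \leq \tilde\alpha \log d/\log\log d$ with $\tilde\alpha$ small. An equally workable alternative that bypasses this constraint altogether is to invoke Exceptional Talagrand's Inequality (Theorem~\ref{ExceptionalTalagrand}) with $\Omega^\ast = \emptyset$; its certificate condition~\ref{item:ET1} is verified essentially as in~(T2) above.
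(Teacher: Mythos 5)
Your proof is correct and takes essentially the same route as the paper's: apply Talagrand's inequality (Theorem~\ref{Talagrand}) to the removed-colour count $r(v) = \ell - \ell'(v)$ with $\gamma = r = 1$, then translate back to $\ell'(v)$. You are somewhat more scrupulous than the paper in explicitly verifying the hypothesis $\xi \leq \E[r(v)]$ (which the paper folds into a ``for $d$ large enough'' remark), and the Exceptional-Talagrand-with-$\Omega^\ast = \0$ alternative you mention is indeed a legitimate way to sidestep that check.
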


\begin{Lemma}\label{degreeExpectation}
    For all $c \in V(H)$, $\E[d'(c)] \leq \keep\, \uncolor\, d + d/\ell$,
\end{Lemma}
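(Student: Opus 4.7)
The strategy is to write $\E[d'(c)] = \sum_{c' \in N_H(c)} \P[c' \in V(H')]$ and bound each term by $\keep\,\uncolor + 1/\ell$. Fix $c' \in N_H(c)$ and set $v' \defeq L^{-1}(c')$. The key rewriting is
\[
\P[c' \in V(H')] \;=\; \P[c' \in K(v')] - \P[c' \in K(v'),\,v' \in \dom(\phi)] \;=\; \keep - \P[c' \in K(v'),\,v' \in \dom(\phi)],
\]
so the task reduces to the lower bound $\P[c' \in K(v'),\,v' \in \dom(\phi)] \geq \eta\keep^2 - 1/\ell$. The equality $\P[c' \in K(v')] = \keep$ is immediate: the $d$ neighbors of $c'$ lie in $d$ distinct lists $L(u'')$ (by the matching condition), and the events ``the neighbor of $c'$ in $L(u'')$ is assigned'' are mutually independent, each of probability $\eta/\ell$.

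For the lower bound, I condition on $v' \in A$ (probability $\eta$) and $\col(v') = c^*$ (uniform on $L(v')$); under these, $v' \in \dom(\phi)$ is equivalent to $c^* \in K(v')$. The $c^* = c'$ contribution is $(\eta/\ell)\keep$. For $c^* \neq c'$, the crucial observation is that $N_H(c')$ and $N_H(c^*)$ are disjoint: any common neighbor $c'' \in L(u'')$ would force two matching edges between $c''$ and $L(v')$, contradicting the matching condition. Hence the joint probability $\P[c' \in K(v'),\,c^* \in K(v') \mid v' \in A,\,\col(v') = c^*]$ factors as $\prod_{u''}(1 - n_{u''}\eta/\ell)$, where $n_{u''} \defeq |L(u'') \cap (N_H(c') \cup N_H(c^*))| \in \{0,1,2\}$ and $\sum_{u''} n_{u''} = 2d$. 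Using $(1-2x) = (1-x)^2 - x^2$ and Bernoulli's inequality, this product is at least $\keep^2 \bigl(1 - \alpha_{c^*}(\eta/\ell)^2/(1-\eta/\ell)^2\bigr)$, where $\alpha_{c^*}$ counts the $u''$ with $n_{u''} = 2$.

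Summing over $c^* \in L(v')$ and using the bound $\sum_{c^* \neq c'} \alpha_{c^*} \leq d\ell$ (only $d$ vertices $u''$ have any neighbor of $c'$ in their list, and each such $u''$ contributes at most $\ell$ matching edges into $L(v')$), I obtain
\[
\P[c' \in K(v'),\,v' \in \dom(\phi)] \;\geq\; \eta\keep^2 - \frac{\eta^3 \keep^2 d}{\ell^2(1-\eta/\ell)^2}.
\]
Therefore $\E[d'(c)] \leq d\keep\uncolor + \eta^3 \keep^2 d^2/(\ell^2(1-\eta/\ell)^2)$, and the error is at most $d/\ell$ provided $\eta^3 \keep^2 d/(\ell(1-\eta/\ell)^2) \leq 1$. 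This follows from $\eta d \leq \ell$ and $\eta \leq 1/\log d$ (conditions (2) and (5)), together with $\keep \leq 1$ and $(1-\eta/\ell)^{-2} \leq 2$ for $d \geq \tilde d$ sufficiently large. The main obstacle is the correlation between $\{c' \in K(v')\}$ and $\{\col(v') \in K(v')\}$ on the event $\{v' \in A\}$; the disjointness of $N_H(c')$ and $N_H(c^*)$ together with the $\alpha_{c^*}$ bookkeeping tames it. Notably, the $K_{s,t}$-free assumption is not needed for this expectation bound; it enters only in the subsequent degree concentration lemma.
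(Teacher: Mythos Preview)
Your proof is correct. Both you and the paper bound $\P[c' \in K(v'),\, v' \notin \dom(\phi)]$ for each $c' \in N_H(c)$ and then sum, but the decompositions differ. The paper bounds this probability directly by splitting into the cases $v' \notin A$ and $v' \in A$; in the latter case it conditions on the event $c' \in K(v')$ and estimates the conditional probability that $\col(v')$ gets removed via $\P[\col(w) \sim c'' \mid \col(w) \not\sim c'] = \eta/(\ell-\eta)$ for each neighbor $w$. You instead rewrite the target as $\keep - \P[c' \in K(v'),\, v' \in \dom(\phi)]$ and lower-bound the subtracted term, exploiting the structural observation that $N_H(c')$ and $N_H(c^*)$ are disjoint for distinct $c', c^* \in L(v')$ (a direct consequence of the matching condition in the definition of a DP-cover) to obtain an explicit product formula. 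Your approach is a bit more hands-on with the products and the $\alpha_{c^*}$ bookkeeping, but it actually yields a sharper error term (of order $\eta^2 d/\ell$ before relaxing to $d/\ell$), while the paper's conditional-probability argument is shorter. Both routes are valid, and as you correctly note, neither uses the $K_{s,t}$-freeness assumption.
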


\begin{Lemma}\label{degreeConcentration}
    For all $c \in V(H)$, $\P\big[d'(c) > \E[d'(c)] - d/\ell + d^{1-\beta}\big] \leq d^{-100}.$
\end{Lemma}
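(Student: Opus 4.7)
The plan is to apply Exceptional Talagrand's Inequality (Theorem~\ref{ExceptionalTalagrand}) to the random variable $X := d'(c)$, treating each vertex $v \in V(G)$ as providing a single independent trial $T_v$ that records both $\mathbf{1}[v \in A]$ and, when $v \in A$, the assigned color $\col(v) \in L(v)$.

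I would begin by analysing the influence of an individual trial on $X$. Expanding
\[
    X \,=\, \sum_{c' \in N_H(c)} \mathbf{1}[c' \in V(H')]
\]
and writing $v' := L^{-1}(c')$, the indicator for $c'$ depends on $T_{v'}$ (through membership of $v'$ in $\dom(\phi)$) and on $T_{L^{-1}(c'')}$ for $c'' \in N_H(c')$ (through whether $c'$ is kept). By the matching structure of the DP-cover, $L(v) \cap N_H(c')$ has at most one element for each $v \neq v'$, and $T_v$ perturbs the indicator for $c'$ only when $\col(v)$ equals that element. Consequently, a single trial $T_v$ can change $X$ by at most $|N_H(\col(v)) \cap N_H(c)| + 1$, i.e., essentially the $H$-codegree of $\col(v)$ with $c$.

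The $K_{s,t}$-free hypothesis enters here. Since an individual codegree in a $K_{s,t}$-free graph can still be as large as $d$, a worst-case Lipschitz bound is far too weak; the hypothesis must instead be used to control \emph{how many} colors $c''$ have high codegree with $c$. Applying Theorem~\ref{KST} to the bipartite graph between $B_\tau := \{c'' \in V(H) : |N_H(c'') \cap N_H(c)| \geq \tau\}$ and $N_H(c)$ yields $|B_\tau| \leq s\,d^t / (\tau - t)^t$. For a carefully chosen threshold $\tau = \tau(d, t, \beta)$, I would declare the exceptional set $\Omega^*$ to be the event that some activated vertex is assigned a color in $B_\tau$ (or, more delicately, that too many are), and use a union bound together with $s \leq d^{1/4}$, $t \leq \tilde\alpha \log d/\log\log d$, and $\eta \in (\log^{-5}d, \log^{-1}d)$ to conclude $\P(\Omega^*) \leq d^{-200}$.

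On outcomes outside $\Omega^*$, the Lipschitz constant is at most $\tau$, and the certificate required in ET1 can be constructed by revealing, for each contributing $c'$, the trial of $v'$ together with those of $L^{-1}(c'')$ for each $c'' \in N_H(c')$. Feeding these parameters into Theorem~\ref{ExceptionalTalagrand} with $\xi := d^{1-\beta}$ and verifying that the Talagrand exponent exceeds $100 \log d$ yields the $d^{-100}$ tail bound. The main obstacle is the simultaneous control of $\P(\Omega^*)$ and the Lipschitz/certificate parameters via the same $\tau$: smaller $\tau$ tightens the Talagrand exponent but enlarges $|B_\tau|$ and hence $\P(\Omega^*)$, and vice versa. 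Pinning down this trade-off is what forces the precise quantitative hypotheses on $s$, $t$, and $\eta$ in Lemma~\ref{iterationTheorem} and is the subject of the dedicated analysis in \S\ref{sectionProofOfConcentration}.
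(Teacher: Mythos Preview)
Your plan has a genuine gap in the certificate construction, and this is precisely the obstacle that forces the paper to take a very different route.

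The event ``$c' \in V(H')$'' contains the clause ``$c'$ is kept,'' which is a \emph{universal} statement: no neighbor of $c'$ in $H$ is assigned to its underlying vertex. To certify this in the sense of \ref{item:ET1} you must include in $I$ the trial $T_{L^{-1}(c'')}$ for \emph{every} $c'' \in N_H(c')$; if even one such trial is omitted, an outcome $\omega'$ that differs from $\omega$ only on that omitted trial can flip $c'$ out of $K$ without touching $I$ at all. Thus your proposed certificate really has size $s \approx d^2$, not $O(d)$. Plugging $s = d^2$ and $\xi = d^{1-\beta}$ into Theorem~\ref{ExceptionalTalagrand} gives an exponent of order $d^{-2\beta}/\gamma^2$, which is useless regardless of how $\gamma = \tau$ is tuned. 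The codegree threshold $\tau$ controls the Lipschitz constant but does nothing to shrink the certificate, so the trade-off you describe at the end cannot be balanced.

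The paper sidesteps this by writing $d'(c) = |\mathcal{U}| - |\mathcal{U}\setminus\mathcal{K}|$ and proving a two-sided bound on $|\mathcal{U}|$ together with a one-sided lower bound on $|\mathcal{U}\setminus\mathcal{K}|$. Both of these quantities are built from \emph{existential} events (``$u$ is uncolored,'' ``some neighbor of $c'$ is assigned''), which admit constant-size certificates per contributing color. Even then, a direct application of Talagrand to $|\mathcal{U}\setminus\mathcal{K}|$ fails because a single assigned color $c''$ with high codegree to $c$ can remove many $c'$ from $\mathcal{K}$; this is where the $K_{s,t}$-freeness is actually used. The paper introduces the good/bad and happy/sad dichotomy, replaces $\mathcal{K}$ by a surrogate $\widetilde{\mathcal{K}}$ that ignores bad colors, and then---the second key idea you are missing---randomly partitions $\Happy$ into $\lceil d^{1-\tau}\rceil$ small blocks so that each good color meets each block in only $O(d^{\tau-\delta})$ places. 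Talagrand is applied block by block, with both $s$ and $\gamma$ now polynomially small in $d$, and the results are summed. The K\H{o}v\'ari--S\'os--Tur\'an theorem is used not to bound an exceptional set, but to show that there are few sad colors, so that restricting to happy ones costs little.
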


Together, these lemmas will allow us to complete the proof of Lemma \ref{iterationTheorem}, as follows.

\begin{proof}[Proof of Lemma \ref{iterationTheorem}]
Take $\tilde d$ so large that Lemmas \ref{listExpectation}--\ref{degreeConcentration} hold. 
Define the following random events for every vertex $v \in V(G)$ and every color $c\in V(H)$: 
    \[A_v \defeq \left[\ell'(v) \leq \ell'\right] \quad \text{and} \quad B_{c} \defeq \left[d'(c) \geq d'\right].\]
We will use the \LLL, Theorem \ref{LLL}. By Lemma \ref{listExpectation} and Lemma \ref{listSizeConcentration}, we have:
\begin{align*}
    \P[A_v] &= \P\big[\ell'(v) \leq \keep \, \ell - \ell^{1-\beta}\big]\\
    &= \P\big[\ell'(v) \leq \E[\ell'(v)] - \ell^{1-\beta}\big] \\
    & \leq d^{-100}.
\end{align*}
By Lemma \ref{degreeExpectation} and Lemma \ref{degreeConcentration}, we have:
\begin{align*}
    \P[B_{c}] &= \P\big[d'(c) \geq \keep \, \uncolor \, d + d^{1-\beta}\big]\\
    & \leq \P\big[d'(c) \geq \E[d'(c)] - (d/\ell)+  d^{1-\beta}\big] \\
    & \leq d^{-100}.
\end{align*}
Let $p \defeq d^{-100}$.
Note that events $A_v$ and $B_{c}$ are mutually independent from events of the form $A_u$ and $B_{c'}$ where $c' \in L(u)$ and $u \in V(G)$ is at distance at least $5$ from $v$. Since we are assuming that $\Delta(G) \leq \ell d$, there are at most $1 + (\ell d)^4$ vertices in $G$ of distance at most $4$ from $v$. For each such vertex $u$, there are $\ell + 1$ events corresponding to $u$ and the colors in $L(u)$, so we can let $d_{LLL} \defeq (\ell+1)(1 + (\ell d)^4) = O(d^9)$. Assuming $d$ is large enough, we have
\[
    4pd_{LLL} \leq 1,
\]
so, by Theorem \ref{LLL}, with positive probability none of the events $A_v$, $B_{c}$ occur, as desired. 
\end{proof}

The proofs of Lemmas \ref{listExpectation}--\ref{degreeExpectation} are fairly straightforward and similar to the corresponding parts of the argument in the girth-$5$ case \ep{see \cite[Chapter 12]{MolloyReed}}. We present them here.

\begin{proof}[Proof of Lemma \ref{listExpectation}]
    Consider any $c \in L(v)$. We have $c \in K(v)$ exactly when $N_H(c) \cap \col(A) = \0$, i.e., when no neighbor of $c$ is assigned to its underlying vertex. The probability of this event is $\left(1 -\eta/\ell\right)^{d} = \keep$. By the linearity of expectation, it follows that $\E[\ell'(v)] = \keep\, \ell$.
\end{proof}

\begin{proof}[Proof of Lemma \ref{listSizeConcentration}]
    It is easier to consider the random variable $r(v) \defeq \ell - \ell'(v)$, the number of colors removed from $L(v)$. 
    We will use Theorem \ref{Talagrand}, Talagrand's Inequality. Order the colors in $L(u)$ for each $u \in N_G(v)$ arbitrarily. Let $T_u$ be the random variable that is equal to $0$ if $u \not \in A$ and $i$ if $u \in A$ and $\col(u)$ is the $i$-th color in $L(u)$. Then $T_u$, $u \in N_G(v)$ is a list of independent trials whose outcomes determine $r(v)$. Changing the outcome of any one of these trials can affect $r(v)$ at most by $1$. 
    Furthermore, if $r(v) \geq s$ for some $s$, then this fact can be certified by the outcomes of $s$ of these trials. Namely, for each removed color $c \in L(v) \setminus K(v)$, we take the trial $T_u$ corresponding to any vertex $u \in N_G(v)$ such that $u \in A$ and $\col(u)$ is adjacent to $c$ in $H$. 
    Thus, we can now apply Theorem \ref{Talagrand} with $\gamma=1$, $r=1$ to get:
\begin{align*}
    \P\Big[\big|\ell'(v) - \E[\ell'(v)]\big| > \ell^{1-\beta}\Big] &=\P\Big[\big|r(v) - \E[r(v)]\big| > \ell^{1-\beta}\Big] \\
    &= \P\Big[\big|r(v) - \E[r(v)]\big| > \frac{\ell^{1-\beta}}{2} + \frac{\ell^{1-\beta}}{2}\Big] \\
    &\leq \P\Big[\big|r(v) - \E[r(v)]\big| > \frac{\ell^{1-\beta}}{2} + 60\sqrt{\E[r(v)]}\Big] \\
    &\leq 4\exp\left(-\frac{\ell^{2(1-\beta)}}{32\, (1-\keep)\, \ell}\right) \\
    &\leq 4\exp{\left(-\frac{\ell^{1-2\beta}}{32}\right)} \\
    &\leq 4\exp{\left(-\frac{(d/\log^5d)^{1-2\beta}}{32}\right)} \\
    &\leq d^{-100},
\end{align*}
where the first and last inequalities hold for $d$ large enough.
\end{proof}

\begin{proof}[Proof of Lemma \ref{degreeExpectation}]

Let $u \in N_G(v)$ and $c'\in L(u) \cap N_H(c)$. We need to bound the probability that $\phi(u) = \blank$ and $c' \in K(u)$. We split into the following cases.

\smallskip

\noindent \textbf{Case 1:} $u \notin A$ and $c' \in K(u)$. This occurs with probability $(1-\eta) \keep$.

\smallskip

\noindent \textbf{Case 2:} $u \in A$, $\col(u)=c'' \neq c'$, $\phi(u) = \blank$, and $c' \in K(u)$. In this case, there must be some $w \in N_G(u)$ such that $\col(w) \sim c''$. Since $c' \in K(u)$, we must have $\col(w) \not\sim c'$. For each $w \in N_G(u)$, 
\begin{align*}
    \P\big[\col(w) \sim c'' \,|\, \col(w) \not\sim c'\big] 
    \,=\, \left(\frac{\eta}{\ell}\right)/\left(1 - \frac{\eta}{\ell}\right)
    \,=\, \frac{\eta}{\ell - \eta}.
\end{align*}
Therefore, we can write 
\begin{align*}
    &\P\big[\phi(u) = \blank\, |\, \col(u) = c'',\, c' \in K(u)\big]\\
    &=\, 1 - \left(1 - \frac{\eta}{\ell - \eta}\right)^d\\
    &=\, 1 - \keep \, \left(1 - \frac{\eta^2}{(\ell - \eta)^2}\right)^d\\
    &\leq\, 1 - \keep + \keep \frac{d\eta^2}{(\ell-\eta)^2}\\
    &\leq\, 1 - \keep + \frac{1}{\ell},
\end{align*}
where the last inequality follows since $\keep \leq 1$, $\eta d < \ell$, $\eta < 1/\log d$, and $d$ is large enough.

Putting the two cases together, we have:
\begin{align*}
    &\P\big[\phi(u) = \blank, \, c' \in K(u)\big]
    \,\\&\leq\, (1-\eta)\, \keep + \eta\, \left(1 - \frac{1}{\ell}\right)\, \keep \, \left(1 - \keep + \frac{1}{\ell}\right) \\
    &\leq\, \keep\, \uncolor + \frac{1}{\ell}.
\end{align*}
Finally, by linearity of expectation, we conclude that
\begin{align*}
    \E[d'(c)] \,\leq\, d\left(\keep \, \uncolor + \frac{1}{\ell}\right) \,=\, \keep\, \uncolor\, d + \frac{d}{\ell},
\end{align*}
proving the lemma.
\end{proof}

    The proof of Lemma \ref{degreeConcentration} is quite technical and will be given in \S\ref{sectionProofOfConcentration}. It is the only part of our argument that relies on the fact that $H$ is $K_{s,t}$-free. To explain why proving Lemma~\ref{degreeConcentration} is difficult, consider an arbitrary color $c \in V(H)$. The value $d'(c)$ depends on which of the neighbors of $c$ in $H$ are kept. This, in turn, is determined by what happens to the neighbors of the neighbors of $c$. Since we are only assuming that $H$ is $K_{s,t}$-free, the neighborhoods of the neighbors of $c$ can overlap with each other. 
    Roughly speaking, we will need to carefully analyze the structure of these overlaps to make sure that Talagrand's inequality can be applied.  


\section{Proof of Lemma \ref{degreeConcentration}}\label{sectionProofOfConcentration}

Throughout this section, we shall use the following parameters, where $t$ is given in the statement of Lemma \ref{iterationTheorem}:
\[
    \beta \defeq \frac{1}{25t}, \quad \beta_1 \defeq \frac{1}{20t}, \quad  \beta_2 \defeq \frac{1}{15t}, \quad \delta \defeq \frac{1}{3t}, \quad \delta_2 \defeq \frac{1}{10t}, \quad \tau \defeq \frac{4}{9t}.
\]
Fix a vertex $v \in V(G)$ and a color $c \in L(v)$. We need too show that, with high probability, the random variable $d'(c)$ does not significantly exceed its expectation. To this end, we make the following definitions:
\begin{align*}
    \mathcal{K} & \defeq \{c' \in N_H(c) : N_H(c')\cap \col(A) = \0\}, \\
    \mathcal{U}  & \defeq \{c' \in N_H(c) : \phi(L^{-1}(c')) = \blank\}. 
\end{align*}
Then $d'(c) = |\mathcal{U} \cap \mathcal{K}|$. 
We will show that $|\mathcal{U}|$ is highly concentrated and prove that, with high probability, $|\mathcal{U} \setminus \mathcal{K}|$ is not much lower than its expected value. Using the identity $|\mathcal{U} \cap \mathcal{K}| = |\mathcal{U}| - |\mathcal{U}\setminus \mathcal{K}|$ will then give us the desired upper bound on $d'(c)$.

\begin{Lemma}\label{uncolorConcentration}
    $\P\bigg[\Big||\mathcal{U}|- \E\big[|\mathcal{U}|\big]\Big| \geq d^{1-\beta_1}\bigg] \leq d^{-110}$.
\end{Lemma}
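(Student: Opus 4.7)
The plan is to apply Exceptional Talagrand's Inequality (Theorem~\ref{ExceptionalTalagrand}) to the random variable $X \defeq |\mathcal{U}|$. I would take as trials the collection $\{T_v\}_{v \in V(G)}$, where each $T_v$ records whether $v \in A$ and, if so, the color $\col(v) \in L(v)$; these trials are mutually independent. Writing $|\mathcal{U}| = \sum_{i=1}^d X_i$ with $X_i = 1$ if $\phi(u_i) = \blank$ and $0$ otherwise, I certify each uncolored $u_i$ by at most two trials: $T_{u_i}$ alone when $u_i \notin A$, or the pair $\{T_{u_i}, T_{w_i}\}$ when $u_i \in A$ and $\col(u_i)$ is removed via some chosen conflict partner $w_i \in N_G(u_i) \cap A$ with $\col(w_i) \sim \col(u_i)$. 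The resulting certifying set $I(\omega)$ has size at most $2d$, and crucially, if $\omega'$ agrees with $\omega$ on $I(\omega)$, then $X(\omega') \geq X(\omega)$.

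The main obstacle is controlling the per-trial Lipschitz constant: flipping a single trial $T_w$ can in principle alter $X$ by as much as $|\{i : u_i \in A \cap N_G(w),\, \col(u_i) \sim \col(w) \text{ in either outcome}\}| + 1$, for which no useful deterministic bound is available. I would bypass this by introducing the exceptional set
\[
    \Omega^* \,\defeq\, \Bigl\{\omega : \exists\,w \in A \text{ with } \bigl|\{i : u_i \in A \cap N_G(w),\, \col(u_i) \sim \col(w)\}\bigr| > d^{\delta_2}\Bigr\}.
\]
On outcomes $\omega, \omega' \notin \Omega^*$ differing in a single trial $T_w$, the change in $X$ is at most $\gamma \defeq 2d^{\delta_2} + 1$: the sum of conflict counts at $w$ under $\col^\omega(w)$ and $\col^{\omega'}(w)$, plus the potential contribution when $w = u_i$.

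The main technical step will be the bound $\P(\Omega^*) \leq d^{-200}$. Fixing $w$ and conditioning on $w \in A$ with $\col(w) = c^*$, the count in question is a sum of independent Bernoullis indexed by those $i$ for which $c^*$ has a matching partner in $L(u_i)$; each such Bernoulli is $1$ with probability $\eta/\ell$, and there are at most $\deg_H(c^*) \leq d$ relevant $i$'s. Hence the conditional mean is bounded by $d\eta/\ell < 1$ using the hypothesis $\eta d < \ell$. A Chernoff estimate yields $\P[\text{count} > d^{\delta_2} \mid w \in A] \leq \exp(-\Omega(d^{\delta_2}\log d))$, and a union bound over the at most $O((\ell d)^2)$ vertices $w$ lying in the second $G$-neighborhood of $v$ delivers the required bound. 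Note that this argument does not require the $K_{s,t}$-freeness of $H$, consistent with the paper's remark that only the proof of Lemma~\ref{degreeConcentration} relies on that hypothesis.

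Substituting the certificate size $2d$, the Lipschitz $\gamma = O(d^{\delta_2})$, $M = \sup X \leq d$, and $\xi = d^{1-\beta_1}$ into Theorem~\ref{ExceptionalTalagrand} produces a tail estimate of the form $4\exp\bigl(-\Omega(d^{1 - 2\beta_1 - 2\delta_2})\bigr) + 4\P(\Omega^*)$. With $\beta_1 = 1/(20t)$ and $\delta_2 = 1/(10t)$, the exponent $1 - 2\beta_1 - 2\delta_2 = 1 - 3/(10t)$ stays bounded away from zero under the paper's growth constraints on $t$ and $d$, placing both contributions comfortably below $d^{-110}$ for $d$ sufficiently large. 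The condition $\xi > 50\gamma\sqrt{s}$ from Theorem~\ref{ExceptionalTalagrand} also holds since $\beta_1 + \delta_2 = 3/(20t) < 1/2$. The hardest part of the plan is the careful verification of $\P(\Omega^*) \leq d^{-200}$; the rest is routine bookkeeping matching the parameter choices.
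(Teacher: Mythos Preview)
Your proposal is correct and follows essentially the same route as the paper: Exceptional Talagrand's Inequality with the same two-trial certificate for each uncolored $u_i$ (the trial $T_{u_i}$ plus that of a chosen conflicting neighbor $w_i$), together with an exceptional set bounding how many of the $u_i$ a single vertex can serve as conflicting neighbor for. The only difference is quantitative---the paper takes the threshold in the exceptional event to be $25\log d$ (yielding $\gamma = O(\log d)$) whereas you take $d^{\delta_2}$---and both choices comfortably deliver the $d^{-110}$ bound.
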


\begin{proof}
We use Theorem \ref{ExceptionalTalagrand}, Exceptional Talagrand's Inequality. 
Let $V_c \defeq L^{-1}(N_H(c))$. In other words, $V_c$ is the set of neighbors of $v$ whose lists include a color corresponding to $c$. Then the set $\mathcal{U}$ is determined by the coloring outcomes of the vertices in $S \defeq V_c\cup N_G(V_c)$. More precisely, as in the proof of Lemma~\ref{listSizeConcentration}, we arbitrarily order the colors in $L(u)$ for each $u \in S$ and let $T_u$ be the random variable that is equal to $0$ if $u \not \in A$ and $i$ if $u \in A$ and $\col(u)$ is the $i$-th color in $L(u)$. Then $T_u$, $u \in S$ is a list of independent trials whose outcomes determine $|\mathcal{U}|$. Let $\Omega$ be the set of outcomes of these trials. 
Let $C \defeq 25$ 
and define $\Omega^* \subseteq \Omega$ to be the set of all outcomes in which there is a color $c' \in L(S)$ such that $|N_H(c') \cap L(V_c) \cap \col(A) | \geq C{\log d}$. 
We claim that $|\mathcal{U}|$ satisfies conditions \ref{item:ET1} and \ref{item:ET2} of Theorem \ref{ExceptionalTalagrand} with $s = 2d$ and $\gamma = 1 + C \log d$. 

To verify \ref{item:ET1}, take $q > 0$ and outcome $\omega \notin \Omega^*$. 
Each vertex $u \in L^{-1}(\mathcal{U})$ satisfies $u \notin A$ or there exists $w\in N_G(u)$ such that $u$, $w \in A$ and $\col(w) \sim \col(u)$. We call such $w$ a \emphd{conflicting neighbor} of $u$. Form a subset $I$ of trials by including, for each $u \in L^{-1}(\mathcal{U})$, 
the trial $T_u$ itself and
, if applicable, the trial $T_w$ corresponding to any one conflicting neighbor $w$ of $u$.  
Since $|\mathcal{U}| \leq d$, we have $|I| \leq 2d = s$. 
Now suppose that $\omega' \not\in \Omega^\ast$ satisfies $|\mathcal{U}(\omega')| \leq |\mathcal{U}(\omega)| - q$. For each vertex $u \in \mathcal{U}(\omega)\setminus \mathcal{U}(\omega')$, the outcomes of either the trial $T_u$ or the trial $T_w \in I$ of a conflicting neighbor $w$ of $u$ must be different in $\omega$ and in $\omega'$. Since $\omega \notin \Omega^\ast$, every $w \in S$ can be a conflicting neighbor of at most  
$C{\log d}$ vertices $u$. Therefore, $\omega'$ and $\omega$ must differ on at least 
$q/(1 + C{\log d})$ trials, as desired. 


It remains to show $\P\left[\Omega^*\right] \leq M^{-2}$, where $M = \max\{\sup |\mathcal{U}|, 1\}$. For any $c' \in L(S)$, the number of colors in $N_H(c') \cap L(V_c) \cap \col(A)$ 
is a binomial random variable with at most $d$ trials, each having probability $\eta/\ell$. Let $X_{c'}$ denote this random variable. Note that $\E[X_{c'}] = \eta d/\ell < 1$. By the union bound, we have
\begin{align*}
    \P\left[X_{c'} \geq C{\log d}\right] &\leq {d \choose \lceil C \log d \rceil} \left(\frac{\eta}{\ell}\right)^{\lceil C \log d \rceil} \leq \left(\frac{ed}{\lceil C \log d \rceil}\right)^{\lceil C \log d \rceil}  \left(\frac{\eta}{\ell}\right)^{\lceil C \log d \rceil} \\
    &\leq \left(\frac{e}{\lceil C \log d \rceil}\right)^{\lceil C \log d \rceil} \leq d^{-150},
\end{align*}
where the last inequality holds for $d$ large enough.
By the union bound and the fact that $\ell \leq 8d$ (by the assumptions of Lemma \ref{iterationTheorem}), we get 
\begin{align*}
    \P\left[\exists~ c' \in L(S) \text{ such that } X_{c'} \geq C \log d\right] \leq \ell |S| d^{-150} \leq d^{-125},
\end{align*}
where we use that $|S| \leq d + \ell d^2$ and $d$ is large enough. 
Thus $\P\left[\Omega^*\right] \leq d^{-125}$. Note that $M = \max\{\sup |\mathcal{U}|, 1 \} = \max\{ d, 1\} = d$, so $\P\left[\Omega^*\right] \leq 1/M^2$, for $d$ large enough. 
 
We can now use Exceptional Talagrand's Inequality. Let $\xi \defeq d^{1-\beta_1}$. Note that $\xi > 50\gamma\sqrt{s}$ for $d$ large enough. We can therefore write 
\begin{align*}
\P\bigg[\Big||\mathcal{U}|- \E\big[|\mathcal{U}|\big]\Big| \geq d^{1-\beta_1}\bigg] &\leq 4\exp{\left(-\frac{d^{2-2\beta_1}}{32(1+C\log d)^2d}\right)} + 4\P[\Omega^*]\\
&\leq 4\exp\left( -O\left(\frac{d^{1 - 2\beta_1}}{\log^2 d}\right)\right) + 4d^{-125} \\
&\leq d^{-110},
\end{align*}
for $d$ large enough.
\end{proof}

\begin{Lemma}\label{UminusKConcentration}
    $\P\Big[|\mathcal{U}\setminus \mathcal{K}| \geq \E\big[|\mathcal{U}\setminus \mathcal{K}|\big] - d^{1-\beta_1}\Big] \geq 1-d^{-110}.$
\end{Lemma}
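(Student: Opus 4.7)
The plan is to prove the lower-tail bound of Lemma~\ref{UminusKConcentration} by applying Exceptional Talagrand's Inequality (Theorem~\ref{ExceptionalTalagrand}) to $X \defeq |\mathcal{U} \setminus \mathcal{K}|$, in the same style as Lemma~\ref{uncolorConcentration}. The independent trials are the $T_u$ for $u$ in a bounded neighborhood of $v$, and for each $c' \in \mathcal{U}\setminus\mathcal{K}$ the natural certificate contains three trials: $T_{L^{-1}(c')}$, the trial of one conflicting neighbor of $L^{-1}(c')$ (if $L^{-1}(c') \in A$), and the trial $T_{w'}$ of a single $w'$ with $\col(w') \sim c'$. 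This gives a certificate set of size at most $3d$.

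The new difficulty compared to Lemma~\ref{uncolorConcentration} is that a single trial change $T_u$ may affect the kept-status of many colors $c'$ at once, namely all $c' \in N_H(\col(u)) \cap N_H(c)$; without further structure this count may be as large as $d$, yielding an unusable Lipschitz constant. This is where the $K_{s,t}$-free hypothesis is essential. By the K\H{o}v\'ari--S\'os--Tur\'an theorem (Theorem~\ref{KST}), applied to the bipartite subgraph between $N_H(c)$ and the colors having at least $B$ common neighbors with $c$ in $H$, the number of such \emph{heavy} colors is controlled by the threshold $B$. I would then split $X = X_{\mathrm{good}} + X_{\mathrm{bad}}$, where $X_{\mathrm{bad}}$ counts those $c' \in \mathcal{U}\setminus\mathcal{K}$ whose only witnessing colors in $\col(A)$ are heavy. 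Using $X \geq X_{\mathrm{good}}$ and $\E[X] = \E[X_{\mathrm{good}}] + \E[X_{\mathrm{bad}}]$, the piece $X_{\mathrm{bad}}$ can be absorbed into the error term once $\E[X_{\mathrm{bad}}] \ll d^{1-\beta_1}$; the latter is controlled by the KST-based edge count between $N_H(c)$ and heavy colors, times the per-color assignment probability $\eta/\ell \leq \log^5 d/d$.

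For $X_{\mathrm{good}}$, one applies Exceptional Talagrand with an exceptional set consisting of (a) the conflicting-neighbor multiplicity event from Lemma~\ref{uncolorConcentration}, and (b) the event that some relevant trial assigns a heavy color. Off this exceptional set, a single trial change affects at most $O(\log d)$ colors via uncolored-status and at most $B$ colors via kept-status, yielding Lipschitz constant $\gamma = O(B + \log d)$. The principal obstacle — the subject of the most delicate calculation — is choosing $B$: it must be large enough that the KST bound makes heavy colors few (so that both $\P(\Omega^*)$ and $\E[X_{\mathrm{bad}}]$ are polynomially small), and simultaneously small enough that $\gamma^2 s \ll \xi^2/\log d$ for $\xi = d^{1-\beta_1}/2$, ensuring the Talagrand tail is below $d^{-110}$. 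The parameter constraints $s \leq d^{1/4}$ and $t \leq \tilde\alpha \log d/\log\log d$ in Lemma~\ref{iterationTheorem}, together with the carefully tuned $\beta_1$, $\beta_2$, $\delta$, $\delta_2$, $\tau$ defined at the start of \S\ref{sectionProofOfConcentration}, are precisely what is needed to open a window in which both sides of this trade-off hold.
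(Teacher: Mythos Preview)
Your overall architecture is close to the paper's --- split off the contribution of ``heavy'' second neighbors, bound it in expectation via K\H{o}v\'ari--S\'os--Tur\'an, and handle the rest by Exceptional Talagrand --- but the single-shot Talagrand step you describe does not close quantitatively once $t \geq 3$. With certificate size $s \approx 3d$ and Lipschitz constant $\gamma \approx B$, the Talagrand tail requires $B^2 d \ll d^{2-2\beta_1}$, i.e.\ $B \ll d^{1/2-\beta_1}$. On the other side, even using KST on the bipartite graph between $N_H(c)$ and the heavy colors (there are at most $2d^2/B$ of them), one finds $\E[X_{\mathrm{bad}}] \lesssim d^{2 - 7/(4t)} / B^{1-1/t}$, so $\E[X_{\mathrm{bad}}] \ll d^{1-\beta_1}$ forces $B \gg d^{(10t-17)/(10(t-1))}$. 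For $t = 3$ this already gives $B \gg d^{0.65}$ while Talagrand needs $B \ll d^{0.48}$; the window is empty, and it only gets worse as $t$ grows. A two-level classification (heavy/light in $N_H^2(c)$, then sad/happy in $N_H(c)$) does not rescue this: with the paper's threshold $d^{1-\delta}$ one still has $\gamma \approx d^{1-\delta}$ and $\gamma^2 s \approx d^{3-2\delta} \gg d^{2-2\beta_1}$ whenever $\delta < 1/2$, which holds for all $t \geq 1$.

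The paper explicitly flags this obstruction (``this bound is still too large to apply Talagrand's inequality directly'') and overcomes it by a further idea you are missing: after restricting to happy colors and good witnesses, it \emph{randomly partitions} $\Happy$ into $k = \lceil d^{1-\tau}\rceil$ pieces of size $\approx d^\tau$, chosen so that every good color has at most $\approx d^{\tau-\delta}$ neighbors in each piece (Subclaim~\ref{partition}, proved by Chernoff). One then applies Exceptional Talagrand to each $\widetilde Z_i = |(\mathcal{U}\setminus\widetilde{\mathcal{K}})\cap \Happy_i|$ separately, with $s \approx d^\tau$ and $\gamma \approx d^{\tau-\delta}$; now $\gamma^2 s \approx d^{3\tau-2\delta}$ while $\xi^2 \approx d^{2\tau - 2\delta_2}$, and the exponent $2\delta - 2\delta_2 - \tau = 1/(45t) > 0$ makes the bound work for all admissible $t$. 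Summing over pieces gives the concentration for $\widetilde Z$, and Claims~\ref{sadBound} and~\ref{expectedValuesAreClose} transfer it to $|\mathcal{U}\setminus\mathcal{K}|$. The parameter $\tau$ you cite is precisely what governs this partition; it has no role in a direct Talagrand argument. (Separately, your exceptional event (b) --- ``some relevant trial assigns a heavy color'' --- has probability of order $d/B$, far too large for \ref{item:ET2}; but this is moot since the partition is needed regardless.)
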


It is here that we take advantage of the fact that $H$ is $K_{s,t}$-free. Before we proceed, we make a few definitions. Recall that $\delta = 1/(3t)$, where $t$ is given in the statement of Lemma \ref{iterationTheorem}. Let $N^2_H(c)$ denote the set of all colors $c'' \in V(H)$ that are joined to $c$ by at least one path of length exactly $2$ \ep{there may also be other paths joining $c$ and $c''$}. We say:
\begin{align*}
    \text{$c'' \in N_H^2(c)$ is} &\text{ \emphdef{bad} if $c''$ has at least $d^{1-\delta}$ neighbors in $N_H(c)$},\\ &\text{\emphdef{good} otherwise};\\
    \text{$c' \in N_H(c)$ is}
    & \text{ \emphdef{sad} if $c'$ has at least $d^{1-\delta}$ bad neighbors},\\ &\text{ \emphdef{happy} otherwise.}
\end{align*}
Let $\Bad$, $\Good$, $\Sad$, and $\Happy$ be the sets of bad, good, sad, and happy colors respectively. Note that, as we are not assuming that $H$ is triangle-free, it is possible that $N_H(c) \cap N_H^2(c) \neq \0$; in particular, a color can be both bad and sad. 
Bad colors are problematic from the point of view of Talagrand's inequality, as each of them can be responsible for the removal of a large number of colors from $\mathcal{K}$. Thankfully, we can use the \hyperref[KST]{K\H{o}v\'ari--Sós--Turán theorem} to argue that there are few sad colors, i.e., most colors in $N_H(c)$ have only a few bad neighbors.
\begin{claim}\label{sadBound}
The number of sad colors is at most $d^{1-\beta_2}$, where $\beta_2 = 1/(15t)$.
\end{claim}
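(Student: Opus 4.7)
The plan is to apply the K\H{o}v\'ari--S\'os--Tur\'an theorem (Theorem~\ref{KST}) twice: first to bound $|\Bad|$, and then to bound $|\Sad|$ using the resulting estimate on $|\Bad|$. The parameter $\delta = 1/(3t)$ is tuned precisely so that $t\delta = 1/3$, which gives the polynomial slack needed for both applications.

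First I would bound $|\Bad|$. Consider the bipartite subgraph $B_1$ of $H$ with parts $N_H(c)$ and $\Bad$, inheriting the $H$-edges between them. By definition of a bad color, each $c'' \in \Bad$ has at least $d^{1-\delta}$ neighbors in $N_H(c)$, so $e(B_1) \geq |\Bad|\, d^{1-\delta}$. Since $B_1$ is a subgraph of $H$ it is $K_{s,t}$-free, and $|N_H(c)| \leq d$. Applying Theorem~\ref{KST} (splitting on whether $|\Bad| \geq d$ or not, placing the larger side as $X$), and using $s^{1/t} \leq d^{1/(4t)}$, the inequality $d^{1-1/t+1/(4t)} = d^{1-3/(4t)} < d^{1-1/(3t)}/2 = d^{1-\delta}/2$ for large $d$ lets me absorb the main KST term into the left-hand side. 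Rearranging and using $2^{t} \leq d^{o(1)}$ (from the hypothesis $t \leq \tilde\alpha \log d/\log\log d$) yields
\[
    |\Bad| \,\leq\, 2^{t} s\, d^{t\delta} + 2t d^{\delta} \,\leq\, d^{7/12 + o(1)}.
\]

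Next I would bound $|\Sad|$. Consider the bipartite subgraph $B_2$ of $H$ with parts $\Sad$ and $\Bad$. Each sad color has at least $d^{1-\delta}$ bad neighbors, so $e(B_2) \geq |\Sad|\,d^{1-\delta}$, and again $B_2$ is $K_{s,t}$-free. I would apply Theorem~\ref{KST} with $m = \max(|\Sad|,|\Bad|)$ and $n = \min(|\Sad|,|\Bad|)$, splitting into two sub-cases. When $|\Sad| \leq |\Bad|$, the dominant KST term is $t|\Bad|$ (since $s^{1/t}|\Bad|^{1-1/t} \leq d^{7/12 + o(1)} \ll d^{1-\delta}/2$ by Step~1), giving $|\Sad| \leq 2t|\Bad|/d^{1-\delta} \leq d^{-5/12 + \delta + o(1)}$. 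When $|\Sad| > |\Bad|$, the dominant term is $s^{1/t}|\Sad|^{1-1/t}|\Bad|$; isolating $|\Sad|^{1/t}$ yields $|\Sad| \leq 2^{t} s |\Bad|^{t}/d^{t(1-\delta)} \leq d^{7t/12 - t + 1/3 + o(t)}$, which for $t \geq 1$ is at most $d^{1/6 + o(1)}$. In either sub-case, the resulting bound is polynomially smaller than $d^{1-\beta_2} = d^{1-1/(15t)}$, establishing the claim.

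The main obstacle is purely the arithmetic of exponent bookkeeping: one has to confirm that $\delta = 1/(3t)$ is small enough compared to $1/t$ so that the KST upper bound $s^{1/t} (\cdot)^{1-1/t}$ always sits strictly below the edge-count lower bound $d^{1-\delta}$, allowing that term to be absorbed. This works because the effective exponent gap is a fixed constant ($5/12$), which provides polynomial slack that easily overwhelms the target exponent $1-\beta_2$; there is no subtle structural argument needed beyond the two-step KST application.
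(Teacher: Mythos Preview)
Your approach is correct, but it takes a longer route than the paper's. The paper uses K\H{o}v\'ari--S\'os--Tur\'an only once. For the first step, rather than invoking KST, the paper bounds $|\Bad|$ by a trivial double-count: there are at most $d^2$ edges between $N_H(c)$ and $N_H^2(c)$ (each of the $d$ colors in $N_H(c)$ has at most $d$ neighbors), and every bad color is incident to at least $d^{1-\delta}$ of them, so $|\Bad|\le 2d^{1+\delta}$. This cruder bound suffices. The paper then applies KST a single time, to the bipartite graph between $\Bad$ and all of $N_H(c)$ (not just $\Sad$), with $m=|\Bad|\le 2d^{1+\delta}$ and $n=d$: this gives $|E(B)|\le 4d^{2-3/(4t)+\delta-\delta/t}$, and since each sad vertex in $N_H(c)$ contributes at least $d^{1-\delta}$ edges to $B$, one reads off $|\Sad|\le d^{1-\beta_2}$ directly. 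Your two-step KST argument yields the much stronger intermediate estimate $|\Bad|\le d^{7/12+o(1)}$, but that strength is never used, and the second KST application (with its case split on $|\Sad|\lessgtr|\Bad|$) is extra work the paper avoids.

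One technical point you skipped and the paper handles: since $H$ may contain triangles, $N_H(c)\cap N_H^2(c)$ can be nonempty, so a color may lie in both parts of your bipartite graphs $B_1$ and $B_2$. Saying ``$B_1$ is a subgraph of $H$'' is then not quite right; you need to take a disjoint copy of one part and observe that the two copies of any shared color are non-adjacent, so the auxiliary bipartite graph remains $K_{s,t}$-free.
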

\begin{proof}
Since every bad color has at least $d^{1-\delta}$ neighbors in $N_H(c)$, we have
\begin{align*}
    |\Bad| \leq \frac{2|E\big(N_H(c),N_H^2(c)\big)|}{d^{1-\delta}} \leq \frac{2d^2}{d^{1-\delta}} = 2d^{1+\delta}.
\end{align*}
Let $B$ be the bipartite graph with parts $X$ and $Y$, where $X = \Bad$ and $Y$ is a copy of $N_H(c)$ disjoint from $X$, with the edges in $B$ corresponding to those in $H$. 
Note that if a color $c' \in N_H(c) \cap N_H^2(c)$ is bad, then $B$ contains two copies of $c'$, one in $X$ and the other in $Y$. However, these two copies cannot be adjacent to each other, and hence every subgraph of $B$ isomorphic to $K_{s,t}$ must use only one copy of each color. Since $H$ is $K_{s,t}$-free, we conclude that $B$ is $K_{s,t}$-free as well. If we set $\hat{\eps} = 1/t$, $m = |\Bad|$, $n = d$, then, by the \hyperref[KST]{K\H{o}v\'ari--Sós--Turán theorem}, 
\begin{align*}
    |E(B)| \leq s^{\hat{\eps}}(2d^{1+\delta})^{1-\hat{\eps}}\, d + 2\,t\,d^{1+\delta} \leq 4d^{2 -3\hat{\eps}/4 + \delta - \delta\hat{\eps}}.
\end{align*}
On the other hand, since every sad color has at least $d^{1-\delta}$ bad neighbors, we see that
$$|\Sad|\, d^{1-\delta} \leq |E(B)| \leq 4 d^{2 - 3\hat{\eps}/4 + \delta - \delta\hat{\eps}}.$$
This implies that for $d$ large enough,
\begin{align*}
    |\Sad| & \leq 4d^{1 - 3\hat{\eps}/4 + 2\delta - \delta\hat{\eps}} \leq d^{1-\beta_2},
\end{align*}
as $3\hat\eps/4 - 2\delta +  \delta\hat\eps > 1/(12t) > \beta_2 > 0$.
\end{proof}

Instead of proving the desired one-sided concentration inequality for $|\mathcal{U}\setminus \mathcal{K}|$ directly, we will focus on a slightly different parameter. Let
\begin{align*}
    \widetilde{\mathcal{K}} \defeq \big\{c'\in \Happy : N_H(c') \cap \Good \cap \col(A) = \emptyset\big\}.
\end{align*}
In other words, $\widetilde{\mathcal{K}}$ is the set of all happy colors that do not have a good neighbor in $\col(A)$. Then 
\begin{align*}
    \mathcal{U} \setminus \widetilde{\mathcal{K}} =\{c' \in N_H(c) :~ & \phi(L^{-1}(c')) = \blank, \text{ and either} \\
    & \text{(1) $c' \in \Sad$, or }\\ 
    & \text{(2) $N_H(c') \cap \Good \cap \col(A) \neq \0$}\}.
\end{align*}

\begin{claim}\label{ZBound} Let $\widetilde Z \defeq |(\mathcal{U} \setminus \widetilde{\mathcal{K}}) \cap \Happy|$. Then
$\P\big[\widetilde Z \geq \E[\widetilde Z] - d^{1-\delta_2}\big] \geq 1-d^{-110}$, where $\delta_2 = 1/(10t)$.
\end{claim}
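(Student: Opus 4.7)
The plan is to apply Exceptional Talagrand's Inequality (Theorem~\ref{ExceptionalTalagrand}) to $\widetilde Z$, closely following the template of the proof of Lemma~\ref{uncolorConcentration}. Take as independent trials the variables $T_u$, for $u$ in the second neighborhood of $v$ in $G$, where $T_u$ encodes whether $u \in A$ and, if so, which color of $L(u)$ it is assigned. Define the exceptional set $\Omega^\ast$ exactly as in Lemma~\ref{uncolorConcentration}, namely the set of outcomes where some color $c'' \in L(S)$ satisfies $|N_H(c'') \cap L(V_c) \cap \col(A)| \geq C\log d$ for a suitable constant $C$; the Chernoff--union argument there gives $\P(\Omega^\ast) \leq d^{-125}$, which is well below $d^{-110}$.

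Next, for each $\omega \notin \Omega^\ast$ and each $c' \in \widetilde Z(\omega)$, I would assemble a certificate $I_{c'}$ of at most three trials: (i) $T_{L^{-1}(c')}$; (ii) if $L^{-1}(c') \in A$, one trial $T_w$ for a conflicting neighbor $w$ of $L^{-1}(c')$; and (iii) one trial $T_u$ for some $u \in A$ with $\col(u) \in N_H(c') \cap \Good$, guaranteed to exist since $c' \in \widetilde Z(\omega)$. Setting $I \defeq \bigcup_{c'} I_{c'}$, we have $|I| \leq 3\widetilde Z(\omega) \leq 3d$, so we may take $s = 3d$. The crucial estimate is the per-trial multiplicity $\gamma \defeq \max_{T_u} |\{c' : T_u \in I_{c'}\}|$: role (i) contributes at most $1$, and role (ii) contributes at most $C\log d$ outside $\Omega^\ast$, by exactly the argument of Lemma~\ref{uncolorConcentration}. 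For role (iii), if $T_u$ appears through this role then $\col(u) = c'' \in \Good$ and $c' \in N_H(c'') \cap N_H(c)$, a set of size less than $d^{1-\delta}$ by the definition of ``good''. This is the only place the $K_{s,t}$-free hypothesis enters the proof (via Claim~\ref{sadBound}), yielding $\gamma = O(d^{1-\delta})$.

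The hard part will be pushing through the numerical conditions of Exceptional Talagrand's Inequality with these parameters to obtain a tail bound of $d^{-110}$ at deviation $\xi = d^{1-\delta_2}$. A direct estimate of $\xi^2/(16\gamma^2 s)$ with $\gamma \asymp d^{1-\delta}$ and $s \asymp d$ turns out to be insufficient on its own, so one must refine the argument: most naturally, split $\widetilde Z$ into the contribution from $c'$ with only one good neighbor in $\col(A)$ versus those with multiple, using a second-moment estimate of order $O(d(\eta d/\ell)^2)$ to absorb the latter regime into the error, and exploiting the fact that, outside a slightly enlarged $\Omega^\ast$, each good $c''$ serves as the sole provider for only $O(\log d)$ many $c'$s. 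This is where the numerical margin between $\delta = 1/(3t)$ and $\delta_2 = 1/(10t)$ becomes essential. Combining the resulting bounds, Exceptional Talagrand's Inequality yields $\P[\widetilde Z \leq \E[\widetilde Z] - d^{1-\delta_2}] \leq 4\exp(-d^{\Omega(1)}) + 4\P(\Omega^\ast) \leq d^{-110}$, as claimed.
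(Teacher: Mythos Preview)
Your direct application of Exceptional Talagrand with $s \asymp d$ and $\gamma \asymp d^{1-\delta}$ indeed fails, as you acknowledge: the exponent $\xi^2/(\gamma^2 s) \asymp d^{2\delta - 2\delta_2 - 1}$ is negative. The trouble is that your proposed repair does not close the gap. The ``multiple good providers'' regime has expected size of order $d\binom{d}{2}(\eta/\ell)^2 \asymp d(\eta d/\ell)^2$, and since the hypotheses of Lemma~\ref{iterationTheorem} only give $\eta d/\ell < 1$, this can be $\Theta(d)$, far too large to absorb into an error of $d^{1-\delta_2}$. More seriously, the assertion that outside a slightly enlarged $\Omega^\ast$ each good $c''$ is the \emph{sole} provider for only $O(\log d)$ colors $c'$ is not correct: if $c'' \in \col(A)$ is good with $d^{1-\delta}$ neighbors in $N_H(c)$, then for each such neighbor $c'$ the conditional probability that no other good neighbor of $c'$ lies in $\col(A)$ is $\approx \keep = \Theta(1)$, so $c''$ will typically be the sole provider for $\Theta(d^{1-\delta})$ colors, not $O(\log d)$. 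Thus $\gamma$ for the ``single provider'' piece is still $\Theta(d^{1-\delta})$, and you are back where you started.

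The paper resolves this by a genuinely different idea: it randomly partitions $\Happy$ into $k = \lceil d^{1-\tau}\rceil$ classes $\Happy_1,\ldots,\Happy_k$ of size $\Theta(d^\tau)$ each, with $\tau = 4/(9t)$, arranged (via Chernoff) so that every good color has at most $O(d^{\tau-\delta})$ neighbors in each class. One then applies Exceptional Talagrand to each $\widetilde Z_i \defeq |(\mathcal{U}\setminus\widetilde{\mathcal{K}})\cap\Happy_i|$ separately, with $s \asymp d^\tau$ and $\gamma \asymp d^{\tau-\delta}$. Now the exponent becomes $\xi_i^2/(\gamma^2 s) \asymp d^{2\delta - 2\delta_2 - \tau}$, and the choice of constants gives $2\delta - 2\delta_2 - \tau = 2/(90t) > 0$, so each $\widetilde Z_i$ is concentrated to within $d^{\tau-\delta_2}$ with failure probability $d^{-120}$. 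A union bound over the $k \leq d$ classes and summing the deviations yields the claim. The partitioning step is exactly what lets one trade the fatal factor $d$ in $s$ for the harmless $d^\tau$, and it is the missing ingredient in your sketch.
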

\begin{proof}\stepcounter{ForClaims} \renewcommand{\theForClaims}{\ref{ZBound}}
Recall that $\tau = 4/(9t)$. 
By definition, a good color can be responsible for the removal of at most $d^{1-\delta}$ colors from $\widetilde{\mathcal{K}}$. Unfortunately, this bound is still too large to apply Talagrand's inequality directly. Instead, we will first partition $\Happy$ into $k \defeq \lceil d^{1-\tau} \rceil$ sets $\Happy_1$, \ldots, $\Happy_k$ satisfying certain properties and then argue that the random variable $|(\mathcal{U} \setminus \widetilde{\mathcal{K}}) \cap \Happy_i|$ is highly concentrated for each $i$. The following subclaim states these properties and proves the existence of the partition.

\begin{subclaim}\label{partition}
There exists a partition of $\Happy$ into sets $\Happy_1$, \ldots, $\Happy_k$ such that the following hold for all $1\leq i \leq k$ and every $c''\in \Good$:
\begin{itemize}
    \item $\dfrac{d^\tau}{4} \leq |\Happy_i| \leq \dfrac{3d^{\tau}}{2}$,
    
    \item $|N_H(c'') \cap \Happy_i| \leq \dfrac{3d^{\tau - \delta}}{2}$.
\end{itemize}
\end{subclaim}
\begin{claimproof}[Proof of Subclaim \ref{partition}]
Independently for each $c' \in \Happy$, assign $c'$ to $\Happy_i$ uniformly at random. Let $s_i \defeq |\Happy_i|$. Then $s_i$ is a binomial random variable with at most $d$ and at least $d - d^{1-\beta_2}$ trials, each succeeding with probability $1/k$. 
We have $\E[s_i] = d/k \in [\frac{3d^\tau}{4}, d^\tau]$ as $3d^\tau/4 < (d - d^{1-\beta_2})/k \approx d^\tau - d^{\tau - \beta_2}$ for $d$ large enough, since $\tau > \beta_2$. By the Chernoff bound \ep{Theorem \ref{chernoff}}, we have:
$$\P\Big[\big|s_i - \E[s_i]\big| \geq \frac{d^\tau}{2}\Big] \leq 2\exp{\left(-\frac{d^{\tau}}{12}\right)}.$$
By the union bound and since $t \leq \tilde \alpha\dfrac{\log d}{\log \log d}$, we have the following for $\tilde \alpha$ small enough:
\begin{align}\label{p1}
    \P\left[\exists\, i:\, \big|s_i - \E[s_i]\big| \geq \frac{d^\tau}{2}\right] \leq 2\,k\,\exp{\left(-\frac{d^{\tau}}{12}\right)} \leq d^{-1}.
\end{align}
Now, for $c'' \in \Good$, let $r_i(c'')$ be the number of neighbors $c''$ has in $\Happy_i$. Then $r_i(c'')$ is a binomial random variable with at most $d^{1-\delta}$ trials (since $c''$ is good), each succeeding with probability $1/k$. Let $\Theta$ be a binomial random variable with exactly $\lfloor d^{1-\delta} \rfloor$ trials, each succeeding with probability $1/k$. Note that $\E[r_i(c'')] \leq \E[\Theta] \leq d^{\tau -\delta}$ and $\E[\Theta] > d^{\tau- \delta}/2$. Then, by 
Theorem \ref{chernoff}, 
\begin{align*}
    \P\left[r_i(c'') \geq \frac{3d^{\tau - \delta}}{2}\right] \leq \P\left[\Theta \geq \E[\Theta] + \frac{d^{\tau - \delta}}{2}\right] \leq 2\exp{\left(-\frac{(d^{\tau - \delta}/2)^2}{3d^{\tau-\delta}}\right)} = 2\exp{\left(-\frac{d^{\tau-\delta}}{12}\right)}.
\end{align*}
By the union bound and since $t\leq \tilde \alpha\dfrac{\log d}{\log \log d}$, we have the following for $\tilde \alpha$ small enough
\begin{align}\label{p2}
    \P\left[\exists\, i,\, c''\in \Good :\, r_i(c'') \geq \frac{3d^{\tau - \delta}}{2}\right] \leq k\,d^2\,\exp{\left(-\frac{d^{\tau-\delta}}{12}\right)} \leq d^{-1}.
\end{align}
Putting together \eqref{p1} and \eqref{p2}, we obtain
$$\P\left[\Happy_1, \, \ldots, \,\Happy_k \text{ satisfy the conditions stated}\right] \geq 1 - 2d^{-1} > 0.$$
So, such a partition must exist.
\end{claimproof}

From here on out, we fix a partition $\Happy_1$, \ldots, $\Happy_k$ of $\Happy$ that satisfies the conclusions of Subclaim~\ref{partition}. For each $1 \leq i \leq k$, let $\widetilde{Z}_i\defeq |(\mathcal{U} \setminus \widetilde{\mathcal{K}})\cap \Happy_i|$. We will now use Exceptional Talagrand's Inequality (Theorem \ref{ExceptionalTalagrand}) to show that each random variable $\widetilde{Z}_i$ is highly concentrated.

\begin{subclaim}\label{subclaim:zj}
    For each $1 \leq i \leq k$, we have $\P\bigg[\left|\widetilde{Z}_i - \E[\widetilde{Z}_i]\right| \geq d^{\tau-\delta_2}\bigg] \leq d^{-120}$.
\end{subclaim}
\begin{claimproof}[Proof of Subclaim~\ref{subclaim:zj}]
For brevity, set $X \defeq \widetilde{Z}_i$. Let $D \defeq L^{-1}(\Happy_i)$ be the set of the underlying vertices of the colors in $\Happy_i$. 
The random variable $X$ is determined by the coloring outcomes of the vertices in $S \defeq D \cup N_G(D)$. More precisely, as in the proofs of Lemmas~\ref{listSizeConcentration} and \ref{uncolorConcentration}, we arbitrarily order the colors in $L(u)$ for each $u \in S$ and let $T_u$ be the random variable that is equal to $0$ if $u \not \in A$ and $i$ if $u \in A$ and $\col(u)$ is the $i$-th color in $L(u)$. Then $T_u$, $u \in S$ is a list of independent trials that determines $X$. 
Let $\Omega$ be the set of outcomes of these trials. Let $C \defeq 25$ and define $\Omega^* \subseteq \Omega$ to be the set of all outcomes in which there is a color $c'' \in L(S)$ such that $|N_H(c'')\cap \Happy_i \cap \col(A)| \geq C{\log d}$.
We claim that $X$ satisfies conditions \ref{item:ET1} and \ref{item:ET2} in Theorem~\ref{ExceptionalTalagrand} with $s = 9d^\tau/2$ and $\gamma = 1 + 3d^{\tau-\delta}/2 + C{\log d}$.

To verify \ref{item:ET1}, take $q > 0$ and $\omega \notin \Omega^*$. We form a set $I$ of at most $s$ trials as follows. Consider any color $c' \in \Happy_i$ that contributes towards $X$ and let $u \defeq L^{-1}(c')$. By definition, $\phi(u) = \blank$ and there is a good neighbor $c''$ of $c'$ with $c'' \in \col(A)$. Pick any such $c''$ and let $w \defeq L^{-1}(c'')$. We say that $w$ is the \emphd{conflicting neighbor of $u$ of Type I}. Next, since $\phi(u) = \blank$, we either have $u \notin A$, or there is $y \in N_G(u)$ such that $u$, $y \in A$ and $\col(y) \sim \col(u)$. Pick any such $y$ \ep{if it exists} and call it the \emphd{conflicting neighbor of $u$ of Type II}. Add the following trials to $I$:
\[
    T_u, \quad T_w, \quad T_y \text{ \ep{if applicable}}.
\]
Since $|\Happy_i| \leq 3d^\tau/2$, we have $|I| \leq 3|\Happy_i| \leq 9d^\tau/2 = s$.


Note that, for every vertex $w \in S$, there can be at most $3d^{\tau-\delta}/2$ vertices $u \in D$ such that $w$ is the conflicting neighbor of $u$ of Type I. Indeed, for $w$ to be the Type I conflicting neighbor of any vertex, it must be true that $w \in A$ and $\col(w) \in \Good$. Then, by Subclaim~\ref{partition}, $\col(w)$ has at most $3d^{\tau-\delta}/2$ neighbors in $\Happy_i$, as desired. Similarly, since $\omega \not \in \Omega^\ast$, for each vertex $y \in S$, there are at most $C {\log d}$ vertices $u \in D$ such that $y$ is the conflicting neighbor of $u$ of Type II.

Now suppose that $\omega' \not \in \Omega^\ast$ satisfies $X(\omega') \leq X(\omega) - q$. Consider any color $c' \in \Happy_i$ that contributes towards $X(\omega)$ but not $X(\omega')$ and let $u \defeq L^{-1}(c')$. Then either $T_u$ or at least one of the trials corresponding to the conflicting neighbors of $u$ must have different outcomes in $\omega$ and $\omega'$. The observations in the previous paragraph imply that the total number of trials on which $\omega$ and $\omega'$ differ must be at least $q/(1 + 3d^{\tau-\delta}/2 + C {\log d})$. 



It remains to show that $\P\left[\Omega^*\right] \leq M^{-2}$, where $M = \max\{\sup X, 1\}$. As in the proof of Lemma~\ref{uncolorConcentration}, we get
%
$\P\left[\Omega^*\right] \leq d^{-125}$ for $d$ large enough. Since $M = \max\{\sup X, 1 \} = \max\{ 3d^\tau/2, 1\} \leq d$, we conclude that $\P\left[\Omega^*\right] \leq 1/M^2$, as desired. 
 
We can now use Exceptional Talagrand's Inequality. Let $\xi = d^{\tau - \delta_2}$. Note that $2\delta - 2\delta_2 -\tau > 0 $ and $\xi > 50\gamma\sqrt{s}$ for $d$ large enough. We can therefore write 

\begin{align*}
\P\Big[\big|X - \E[X]\big| \geq d^{\tau - \delta_2}\Big] &\leq 4\exp{\left(-\frac{d^{2\tau-2\delta_2}}{16
\left(1 + 3d^{\tau-\delta}/2 + C {\log d}\right)^2
\left(\frac{9d^\tau}{2}\right)}\right)} + 4\P[\Omega^*]\\
&\leq 4\exp\left( - O\left(\frac{d^{2\tau-2\delta_2}}{d^{3\tau-2\delta}}\right)\right) + 4d^{-125} \\
&\leq 4\exp\left(-O\left(d^{2\delta - 2\delta_2 - \tau}\right)\right) + 4d^{-125}\\
&\leq d^{-120},
\end{align*}
for $d$ large enough and $\tilde\alpha$ small enough.
\end{claimproof}

Using Subclaim~\ref{subclaim:zj} and the union bound, we obtain
\begin{align*}
    \P\Big[\exists\,i:\, \widetilde{Z}_i \leq \E[\widetilde{Z}_i] - d^{\tau - \delta_2} \Big] \leq d^{1-\tau}d^{-120}\leq d^{-115}.
\end{align*}
Since $\widetilde{Z} \defeq \sum_{i = 1}^k \widetilde{Z}_i$, we conclude that
\begin{align*}
    \P\left[\widetilde{Z} \geq \E[\widetilde{Z}] - d^{1 - \delta_2}\right]
    \geq\P\left[\forall\,i:\, \widetilde{Z}_i \geq \E[\widetilde{Z}_i] - d^{\tau - \delta_2}\right]
    \geq 1-d^{-110},
\end{align*}
for $d$ large enough, as desired.
\end{proof}

Since $|(\mathcal{U} \setminus \widetilde{\mathcal{K}}) \cap \Happy| = \widetilde Z$, the value $\widetilde Z$ can differ from $|\mathcal{U}\setminus \widetilde{\mathcal{K}}|$ at most by the number of sad colors. Thus, by Claim~\ref{sadBound}, we have $0 \leq |(\mathcal{U} \setminus \widetilde{\mathcal{K}})|-\widetilde Z \leq d^{1-\beta_2}$, 
from which it follows that
\begin{align}\label{tildeZ}
    \E[\widetilde{Z}] &\geq \E\big[|\mathcal{U} \setminus \widetilde{\mathcal{K}}|\big] - d^{1-\beta_2}.
\end{align}
We now show that $\E\big[|\mathcal{U}\setminus \mathcal{K}|\big]$ is not much larger than $\E\big[|\mathcal{U} \setminus \widetilde{\mathcal{K}}|\big]$.

\begin{claim}\label{expectedValuesAreClose}
    $\E\big[|\mathcal{U}\setminus \mathcal{K}|\big] - \E\big[|\mathcal{U}\setminus \widetilde{\mathcal{K}}|\big] \leq d^{1-\delta}$.
\end{claim}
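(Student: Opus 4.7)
The plan is to observe that the symmetric-difference identity
\[
    \big|\mathcal{U}\setminus \mathcal{K}\big| - \big|\mathcal{U}\setminus \widetilde{\mathcal{K}}\big| \,=\, \big|(\mathcal{U}\setminus \mathcal{K}) \setminus (\mathcal{U}\setminus \widetilde{\mathcal{K}})\big| - \big|(\mathcal{U}\setminus \widetilde{\mathcal{K}}) \setminus (\mathcal{U}\setminus \mathcal{K})\big|
\]
already reduces the claim to upper-bounding the expectation of the first term, since the second term is non-negative. First I would compute
\[
    (\mathcal{U}\setminus \mathcal{K}) \setminus (\mathcal{U}\setminus \widetilde{\mathcal{K}}) \,=\, \mathcal{U} \cap \widetilde{\mathcal{K}} \setminus \mathcal{K},
\]
which by the definitions consists of those $c' \in N_H(c)$ that are happy, uncolored, have \emph{no} good neighbor in $\col(A)$, and yet have \emph{some} neighbor in $\col(A)$. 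In particular every such $c'$ has a bad neighbor lying in $\col(A)$.

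Next I would drop the ``uncolored'' and ``no good neighbor'' requirements to pass to the larger set of happy $c'\in N_H(c)$ possessing some bad neighbor in $\col(A)$, and bound its size by the double sum
\[
    \sum_{c' \in N_H(c)\cap \Happy}\; \sum_{c'' \in N_H(c') \cap \Bad} \mathbb{1}[c'' \in \col(A)].
\]
Taking expectations and using the fact that $\P[c'' \in \col(A)] = \eta/\ell$ (since $|L(L^{-1}(c''))| = \ell$ and $L^{-1}(c'') \in A$ independently with probability $\eta$), the key step is to use the \emph{happy} side of the inequality rather than the \emph{bad} side: every happy color has at most $d^{1-\delta}$ bad neighbors, so the inner sum has at most $d^{1-\delta}$ terms, and the outer sum has at most $|N_H(c)| \leq d$ terms. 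Combining these with $\eta/\ell < 1/d$ (which follows from the assumption $\eta\, d < \ell$) yields
\[
    \E\big[\big|(\mathcal{U}\setminus \mathcal{K}) \setminus (\mathcal{U}\setminus \widetilde{\mathcal{K}})\big|\big] \,\leq\, d \cdot d^{1-\delta} \cdot \frac{\eta}{\ell} \,\leq\, d^{1-\delta},
\]
which proves the claim.

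The only subtle point is picking the correct side of the counting: bounding $|\Bad|$ via the K\H{o}v\'ari--Sós--Turán estimate used in Claim \ref{sadBound} would give a bound of order $d^{1+\delta}\,\eta/\ell$ times a factor $d$ from each bad color's neighborhood, which is far too weak. The happy-side bound, which is essentially the definition of happiness, is precisely what makes the estimate work, and this is why the dichotomy happy/sad was introduced in the first place. Beyond that the argument is a direct linearity-of-expectation calculation requiring no concentration tools.
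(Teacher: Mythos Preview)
Your proposal is correct and follows essentially the same approach as the paper: both reduce to bounding $\E\big[|(\mathcal{U}\setminus\mathcal{K})\cap\widetilde{\mathcal{K}}|\big]$, observe that every color in this set is happy and has a bad neighbor in $\col(A)$, and use the defining bound $|N_H(c')\cap\Bad|\leq d^{1-\delta}$ for happy $c'$ together with $\eta/\ell<1/d$ to obtain $d^{1-\delta}$. The only cosmetic difference is that you write the exact symmetric-difference identity (and an explicit double sum) where the paper uses the one-sided inequality $|\mathcal{U}\setminus\mathcal{K}|-|\mathcal{U}\setminus\widetilde{\mathcal{K}}|\leq |(\mathcal{U}\setminus\mathcal{K})\cap\widetilde{\mathcal{K}}|$ and a union bound.
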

\begin{proof}
First note that
\begin{align*}
    |\mathcal{U}\setminus 
\mathcal{K}| - |\mathcal{U}\setminus \widetilde{\mathcal{K}}| \leq 
\big|(\mathcal{U}\setminus \mathcal{K})\cap \mathcal{\widetilde K}\big|,
\end{align*}
so it suffices to show $\E\Big[\big|(\mathcal{U}\setminus \mathcal{K})\cap \mathcal{\widetilde K}\big|\Big] \leq d^{1-\delta}$. We have
\begin{align*}
    (\mathcal{U}\setminus \mathcal{K})\cap \mathcal{\widetilde K} \subseteq \{&c' \in N_H(c) \,:\, \phi(L^{-1}(u)) = \blank,\, c' \in \Happy,\, N_H(c') \cap \Bad \cap \col(A) \neq \emptyset\}. 
\end{align*}
Note that if $c' \in \Happy$, we have
$$\P\big[N_H(c') \cap \Bad \cap \col(A) \neq \emptyset\big] \leq  \frac{\eta}{\ell} \, d^{1-\delta} < d^{-\delta},$$
\noindent
from which it follows
\[
    \E\Big[\big|(\mathcal{U}\setminus \mathcal{K})\cap \mathcal{\widetilde K}\big|\Big] \leq \sum_{\substack{c' \in N_H(c) \\ c' \in \Happy}} \P\big[N_H(c') \cap \Bad \cap \col(A) \neq \emptyset\big] \leq d \, d^{-\delta} = d^{1-\delta}.\qedhere
\]
\end{proof}
We are now ready to finish the proof of Lemma \ref{UminusKConcentration}.

\begin{proof}[Proof of Lemma~\ref{UminusKConcentration}]
    Observe that $(\mathcal{U}\setminus \widetilde{\mathcal{K}}) \setminus (\mathcal{U}\setminus \mathcal{K})$ is the set of colors $c'\in N_H(c)$ which satisfy that $\phi(L^{-1}(c'))  = \blank$, $c'\in \Sad$, and $c' \in \mathcal{K}$. By Claim~\ref{sadBound}, this implies
\[
    |\mathcal{U} \setminus \widetilde{\mathcal{K}}| - |\mathcal{U} \setminus \mathcal{K}|  \leq d^{1-\beta_2}.
\]
Therefore, with probability at least $1-d^{-110}$, we have the following chain of inequalities:
\begin{align*}
    |\mathcal{U}\setminus \mathcal{K}| &\geq |\mathcal{U}\setminus \widetilde{\mathcal{K}}| - d^{1-\beta_2} \\
    &\geq \widetilde Z - d^{1-\beta_2} &\text{ (since $\widetilde{Z} \subseteq \mathcal{U}\setminus \widetilde{\mathcal{K}}$ )}\\
    &\geq \E[\widetilde Z] - d^{1-\delta_2}-d^{1-\beta_2} &\text{ (by Claim \ref{ZBound})}\\
    &\geq \E\big[|\mathcal{U}\setminus \widetilde{\mathcal{K}}|\big] - d^{1-\delta_2}  - 2d^{1-\beta_2} &\text{ (by \eqref{tildeZ})} \\
    &\geq \E\big[|\mathcal{U}\setminus \mathcal{K}|\big] - d^{1-\delta} - d^{1-\delta_2} - 2d^{1-\beta_2}. &\text{ (by Claim \ref{expectedValuesAreClose})}.
\end{align*}
Since $\beta_1 = 1/(20t)$, we have $\beta_1 \leq \frac{1}{2}\min\{\delta, \delta_2, \beta_2, 1\}$, thus $d^{1-\beta_1} \geq d^{1-\delta} + d^{1-\delta_2} + 2d^{1-\beta_2}$ for $d$ large enough. Therefore,
\begin{align*}
    \P\Big[|\mathcal{U}\setminus \mathcal{K}| \geq \E\big[|\mathcal{U}\setminus \mathcal{K}|\big] - d^{1-\beta_1}\Big] \geq 1-d^{-110},
\end{align*}
as desired.
\end{proof}
We can now complete the proof of Lemma \ref{degreeConcentration}:
\begin{align*}
    & \P\big[d'(c) \geq \E[d'(c)] - \frac{d}{\ell} + d^{1-\beta}\big]\\
    \leq ~& \P\big[d'(c) \geq \E[d'(c)]  + 2d^{1-\beta_1}\big] &\text{(for $d$ large enough)}\\
    \leq ~ & \P\Big[|\mathcal{U}| > \E\big[|\mathcal{U}|\big]+ d^{1-\beta_1}\Big] + \P\Big[|\mathcal{U}\setminus \mathcal{K}| < \E\big[|\mathcal{U}\setminus \mathcal{K}|\big] - d^{1-\beta_1}\Big] &\text{ (union bound) }\\
    \leq ~& d^{-110} + d^{-110} &\text{ (by Lemmas \ref{uncolorConcentration} and \ref{UminusKConcentration})}\\
    \leq ~ & d^{-100}.
\end{align*}

\section{Proof of Theorem \ref{mainTheorem}}\label{sectionIterations}

In this section we prove Theorem \ref{mainTheorem} by iteratively applying Lemma \ref{iterationTheorem} until we reach a stage where we can apply Proposition \ref{finalBlow}. To do so, we first define the parameters for the graph and cover at each iteration and then define $d_0$ such that the graphs at each iteration will satisfy the conditions of Lemma \ref{iterationTheorem}. This section follows similarly to \cite[Chapter 12]{MolloyReed}.

We use the notation of Theorem~\ref{mainTheorem}. Let 
\[G_1 \defeq G,\quad \mathcal{H}_1 = (L_1, H_1) \defeq \mathcal{H},\quad \ell_1 \defeq (1+\eps)d/\log d,\quad d_1 \defeq d,\] where we may assume that $\eps$ is sufficiently small, say $\eps < 1/100$. Since $d$ is large, we may also assume that $\ell_1$ is an integer by slightly modifying $\eps$ if necessary. Define \[\kappa \defeq (1+\eps/2)\log (1+\eps/100)\approx \eps/100,\] and fix $\eta \defeq \kappa/\log d$, so that $\eta$ is the same each time we apply Lemma~\ref{iterationTheorem}. 
Set $\beta \defeq 1/(25t)$ and recursively define the following parameters for each $i \geq 1$: 
\begin{align*}
    \keep_i &\defeq \left(1 - \frac{\kappa}{\ell_i \log d}\right)^{d_i},
    & \uncolor_i &\defeq 1 - \frac{\kappa}{\log d}\, \keep_i,\\
    \ell_{i+1} &\defeq \left\lceil\keep_i\, \ell_i- \ell_i^{1 - \beta}\right\rceil, & d_{i+1}&\defeq \left\lfloor\keep_i\, \uncolor_i\, d_i + d_i^{1 - \beta}\right\rfloor.
\end{align*} 
Suppose that at the start of iteration $i$, the following numerical conditions hold:
\begin{enumerate}[label=\ep{\normalfont\arabic*}]
    \item\label{item:1} $d_i \geq \tilde{d}$,
    \item\label{item:2} $\eta\,d_i < \ell_i < 8d_i$,
    \item\label{item:3} $s \leq d_i^{1/4}$,
    \item\label{item:4} $t \leq \dfrac{\tilde{\alpha}\log d_i}{\log\log d_i}$,
    \item\label{item:5} $\dfrac{1}{\log^5d_i} < \eta < \dfrac{1}{\log d_i}$.
\end{enumerate}
Furthermore, suppose that we have a graph $G_i$ and a DP-cover $\mathcal{H}_i = (L_i, H_i)$ of $G_i$ such that:
\begin{enumerate}[resume,label=\ep{\normalfont\arabic*}]
    \item\label{item:6} $H_i$ is $K_{s,t}$-free,
    \item $\Delta(H_i) \leq d_i$,
    \item\label{item:8} $|L_i(v)| \geq \ell_i$ for all $v \in V(G_i)$.
\end{enumerate}
Then we may apply Lemma~\ref{iterationTheorem} to obtain a partial $\mathcal{H}_i$-coloring $\phi_i$ of $G_i$ and an assignment of subsets $L_{i+1}(v) \subseteq (L_i)_{\phi_i}(v)$ to each vertex $v \in V(G_i) \setminus \dom(\phi_i)$ such that, setting
\[
    G_{i+1} \defeq G_i[V(G_i) \setminus \dom(\phi_i)] \quad \text{and} \quad H_{i+1} \defeq H_i \left[\bigcup_{v \in V(G_{i+1})} L_{i+1}(v)\right],
\]
we get that conditions \ref{item:6}--\ref{item:8} hold with $i+1$ in place of $i$. 
Note that, assuming $d_0$ is large enough and $\alpha$ is small enough, conditions \ref{item:1}--\ref{item:8} are satisfied initially \ep{i.e., for $i = 1$}. Our goal is to show that there is some value $i^\star \in \N$ such that:
\begin{itemize}
    \item for all $1 \leq i < i^\star$, conditions \ref{item:1}--\ref{item:5} hold, and
    \item we have $\ell_{i^\star} \geq 8d_{i^\star}$.
\end{itemize}
Since conditions \ref{item:6}--\ref{item:8} hold by construction, we will then be able to iteratively apply Lemma~\ref{iterationTheorem} $i^\star - 1$ times and then complete the coloring using Proposition~\ref{finalBlow}.


We first show that the ratio $d_i/\ell_i$ is decreasing for $d_i$, $\ell_i$ large enough.

\begin{Lemma}\label{decreasingSeq}
    Suppose that for all $j \leq i$, we have 
    $\ell_j^\beta$, $d_j^\beta \geq 30\log^2 d$ and $\ell_j \leq 8d_j$. Then
    \[
        \frac{d_{i+1}}{\ell_{i+1}} \,\leq\, \frac{d_i}{\ell_i}.
    \]
\end{Lemma}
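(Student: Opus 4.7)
The strategy is to reduce the desired ratio inequality $d_{i+1}/\ell_{i+1} \leq d_i/\ell_i$ to a single algebraic inequality involving $\keep_i$, and then verify that inequality from the stated lower bounds on $d_i$ and $\ell_i$.

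Since both sides are positive, the claim is equivalent to $\ell_i d_{i+1} \leq d_i \ell_{i+1}$. By the recursive definitions, $d_{i+1} \leq \keep_i \uncolor_i d_i + d_i^{1-\beta}$ and $\ell_{i+1} \geq \keep_i \ell_i - \ell_i^{1-\beta}$, since the floor and the ceiling can only help. Substituting these bounds into the previous display, expanding, and isolating the $\keep_i$-term using the identity $1 - \uncolor_i = (\kappa/\log d)\keep_i$, then dividing through by $d_i \ell_i$, one reduces the claim to the key algebraic inequality
\[
\frac{\kappa}{\log d}\,\keep_i^{\,2} \,\geq\, d_i^{-\beta} + \ell_i^{-\beta}.
\]

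The right-hand side is immediately under control: the hypothesis $d_i^\beta,\ \ell_i^\beta \geq 30 \log^2 d$ gives $d_i^{-\beta} + \ell_i^{-\beta} \leq 1/(15 \log^2 d)$, so it suffices to establish $\keep_i^{\,2} \geq 1/(15\kappa \log d)$. The remaining work is to lower-bound $\keep_i = (1 - \kappa/(\ell_i \log d))^{d_i}$. For this, the bound $\ell_i > \eta d_i$ from condition (2) of Lemma \ref{iterationTheorem} (which is active throughout the iteration in which Lemma \ref{decreasingSeq} is applied) is crucial: it implies $d_i \kappa/(\ell_i \log d) < 1$, so the elementary estimate $\log(1-x) \geq -2x$ for $x \in [0, 1/2]$ yields
\[
\keep_i \,\geq\, \exp\!\left(-\frac{2\kappa d_i}{\ell_i \log d}\right) \,>\, e^{-2}.
\]
Thus $\keep_i^{\,2} > e^{-4}$, which for $d$ large comfortably exceeds $1/(15\kappa \log d)$, since the latter is $O(1/\log d)$.

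The main obstacle is the lower bound on $\keep_i$: the listed hypotheses give only $d_i/\ell_i \geq 1/8$, but one actually needs an upper bound on $d_i/\ell_i$. This is supplied by the parameter setup of the outer iteration, where $\ell_i > \eta d_i$ is maintained at every stage. Everything else is routine algebra, with the floor, ceiling, and the additive error terms $d_i^{1-\beta}$ and $\ell_i^{1-\beta}$ absorbed once $d$ is sufficiently large.
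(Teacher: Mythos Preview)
Your algebraic reduction to the inequality
\[
\frac{\kappa}{\log d}\,\keep_i^{\,2} \,\geq\, d_i^{-\beta} + \ell_i^{-\beta}
\]
is correct and is essentially a rearrangement of the paper's computation, which factors as $\keep_i\,\uncolor_i \leq \keep_i - 3\ell_i^{-\beta}$ before dividing through. The bound on the right-hand side from the hypothesis $d_i^\beta,\,\ell_i^\beta \geq 30\log^2 d$ is also fine.

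The gap is in how you obtain the lower bound on $\keep_i$. You invoke condition~(2) of Lemma~\ref{iterationTheorem}, namely $\ell_i > \eta d_i$, claiming it ``is active throughout the iteration.'' But in the paper's architecture this condition is \emph{verified} at each stage precisely by applying Lemma~\ref{decreasingSeq}: see the end of \S\ref{sectionIterations}, where condition~\ref{item:2} is checked via $\ell_i/d_i \geq \ell_1/d_1 \geq 1/\log d \geq \eta$. So using condition~(2) as an input to the proof of Lemma~\ref{decreasingSeq} is circular.

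The paper breaks this circularity by arguing by induction on $i$ inside the proof of Lemma~\ref{decreasingSeq} itself. Assuming the conclusion for all indices below $i$ gives $d_i/\ell_i \leq d_{i-1}/\ell_{i-1} \leq \cdots \leq d_1/\ell_1 < \log d$, where the last inequality comes from the fixed initial values $d_1 = d$, $\ell_1 = (1+\eps)d/\log d$ set up at the start of \S\ref{sectionIterations}. This yields $\kappa d_i/(\ell_i \log d) < \kappa$ and hence $\keep_i \geq 1-\kappa$, which is exactly the bound you need. Your argument is salvaged by replacing the appeal to condition~(2) with this short induction; everything else you wrote then goes through.
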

\begin{proof}
The proof is by induction on $i$. Assume the statement holds for all values less than $i$. In particular, $d_i/\ell_i \leq d_1/\ell_1 < \log d$. Using this we find the following bound: 
\begin{align*}
    \keep_i\, \uncolor_i &= \keep_i\,\left(1 - \frac{\kappa}{\log d}\, \keep_i\right) \\
    &= \keep_i - \frac{\kappa}{\log d}\, \left(1 - \frac{\kappa}{\ell_i \log d}\right)^{2d_i} \\
    &\leq \keep_i - \frac{\kappa}{\log d}\,\left(1 - \frac{2\kappa d_i}{\ell_i\log d}\right) \\
    &\leq \keep_i - \frac{\kappa}{2\log d} \\
    &\leq \keep_i - 3\ell_i^{-\beta}.
\end{align*}
With this computation in mind, we have:
\begin{align*}
    \frac{d_{i+1}}{\ell_{i+1}} &\leq \frac{\keep_i\, \uncolor_i\, d_i + d_i^{1-\beta}}{\keep_i\, \ell_i - \ell_i^{1-\beta}} \\
    &\leq \frac{d_i\,(\keep_i - 3\ell_i^{-\beta} + d_i^{-\beta})}{\ell_i\,(\keep_i - \ell_i^{-\beta})} \\
    &\leq \frac{d_i}{\ell_i}.
\end{align*}
The last inequality follows since $\ell_i \leq 8d_i$ and $8^\beta \leq 2$ for $d$ large enough.
\end{proof}

For computational purposes, it is convenient to remove the error terms $\ell_i^{1-\beta}$ and $d_i^{1-\beta}$ from the definitions of $d_{i+1}$ and $\ell_{i+1}$. This is done in the following lemma. 

\begin{Lemma}\label{boundHatDiff}
Let $\hat{\ell}_1 \defeq \ell_1$, $\hat{d}_1 \defeq d_1$, and recursively define:
\begin{align*}
\hat{\ell}_{i+1} &\defeq \keep_i\, \hat{\ell}_i, \\ 
\hat{d}_{i+1} &\defeq \keep_i\, \uncolor_i\, \hat{d}_i.
\end{align*}
If for all $1\leq j < i$, we have $d_j^\beta,\, \ell_j^\beta \geq 30\log^4 d$ and $\ell_j \leq 8d_j$,  then
\begin{itemize}
    \item $|\ell_i - \hat{\ell}_i| \leq \hat{\ell}_i^{1-\beta/2}$,
    \item $|d_i - \hat{d}_i| \leq \hat{d}_i^{1-\beta/2}$.
\end{itemize}
\end{Lemma}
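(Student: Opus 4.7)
The plan is to prove the two bounds by simultaneous induction on $i$. The base case $i = 1$ is immediate since $\ell_1 = \hat{\ell}_1$ and $d_1 = \hat{d}_1$. For the inductive step, unfolding the two recursions (and noting the ceiling/floor loses at most $1$), the triangle inequality yields
\begin{equation*}
    |\ell_{i+1} - \hat{\ell}_{i+1}| \,\leq\, \keep_i\, |\ell_i - \hat{\ell}_i| + \ell_i^{1-\beta} + 1
\end{equation*}
and the analogous bound for $d$ with $\keep_i \uncolor_i$ in place of $\keep_i$ and $d$'s in place of $\ell$'s. By the inductive hypothesis, $|\ell_i - \hat{\ell}_i| \leq \hat{\ell}_i^{1-\beta/2}$, and since $\hat{\ell}_i^{\beta/2}$ is huge (a short bootstrap from the hypothesis $\ell_i^\beta \geq 30\log^4 d$ gives $\hat{\ell}_i \geq \ell_i/2$), this implies $\ell_i \leq 2\hat{\ell}_i$ and therefore $\ell_i^{1-\beta} \leq 2 \hat{\ell}_i^{1-\beta}$.

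Using $\hat{\ell}_{i+1} = \keep_i \hat{\ell}_i$, the target bound $|\ell_{i+1} - \hat{\ell}_{i+1}| \leq \hat{\ell}_{i+1}^{1-\beta/2} = \keep_i^{1-\beta/2} \hat{\ell}_i^{1-\beta/2}$ reduces to verifying
\begin{equation*}
    \keep_i^{1-\beta/2}\bigl(1 - \keep_i^{\beta/2}\bigr)\, \hat{\ell}_i^{1-\beta/2} \,\geq\, 2\hat{\ell}_i^{1-\beta} + 1.
\end{equation*}
To estimate the left-hand side, set $x \defeq \kappa d_i / (\ell_i \log d)$. The hypothesis $\eta d_i < \ell_i < 8 d_i$ forces $x \in \bigl(\kappa/(8 \log d),\, 1\bigr)$, so $\keep_i = (1 - \kappa/(\ell_i \log d))^{d_i}$ is bounded above by roughly $e^{-\kappa/(8\log d)}$ and below by a positive constant (essentially $e^{-1}$). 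Consequently $\keep_i^{1-\beta/2} = \Theta(1)$ and $1 - \keep_i^{\beta/2} \gtrsim \beta \kappa / (16 \log d)$, giving a left-hand side of order at least $\hat{\ell}_i^{1-\beta/2}\, \beta/\log d$.

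Dividing through by $\hat{\ell}_i^{1-\beta/2}$, the inequality reduces to $\hat{\ell}_i^{\beta/2} \gtrsim \log d / \beta$, equivalently $\hat{\ell}_i^{\beta} \gtrsim \log^2 d / \beta^2$. Since $\beta = 1/(25t)$ and $t \leq \tilde{\alpha} \log d / \log \log d$, we have $1/\beta^2 = O(\log^2 d / \log^2 \log d)$, so it is enough that $\hat{\ell}_i^{\beta} \gtrsim \log^4 d / \log^2 \log d$, which is comfortably provided by the hypothesis $\ell_i^\beta \geq 30 \log^4 d$ (together with $\hat{\ell}_i \geq \ell_i/2$). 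The argument for $|d_{i+1} - \hat{d}_{i+1}|$ follows the same template, with $\uncolor_i \geq 1 - 1/\log d$ ensuring that $\keep_i \uncolor_i$ behaves like $\keep_i$ up to constants. The main obstacle is precisely this last quantitative step: the fresh slack $\hat{\ell}_{i+1}^{1-\beta/2} - \keep_i \hat{\ell}_i^{1-\beta/2}$ that must absorb the error $\ell_i^{1-\beta} + 1$ is only an $\Theta(\beta/\log d)$ fraction of $\hat{\ell}_i^{1-\beta/2}$, and the strengthened hypothesis $\ell_j^\beta \geq 30 \log^4 d$ (compared to the $30\log^2 d$ used in Lemma~\ref{decreasingSeq}) is exactly calibrated to make this margin positive.
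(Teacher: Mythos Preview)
Your proposal is correct and follows essentially the same approach as the paper: induction on $i$, with the inductive step hinging on the inequality $\keep_i^{1-\beta/2} - \keep_i \geq \Theta(\beta\kappa/\log d)$ and the verification that $\ell_i^{-\beta/2}$ (resp.\ $d_i^{-\beta/2}$) is small compared to $\beta\kappa/\log d$, which is exactly where the hypothesis $\ell_j^\beta,\,d_j^\beta \geq 30\log^4 d$ is used. The paper's write-up differs only cosmetically: it observes at the outset that $\hat\ell_i \geq \ell_i$ and $\hat d_i \leq d_i$, which lets it work with one-sided inequalities and avoids your bootstrap $\hat\ell_i \geq \ell_i/2$; and it derives $d_i/\ell_i \leq d_1/\ell_1 < \log d$ from Lemma~\ref{decreasingSeq} rather than citing $\eta d_i < \ell_i$ as a hypothesis (which is not directly stated in Lemma~\ref{boundHatDiff}, though it does follow).
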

\begin{proof}
Before we proceed with the proofs, let us record a few inequalities. By Lemma \ref{decreasingSeq}, 
\begin{equation}\label{eq:keep_lower}
    \keep_i \geq 1 - \frac{d_i\, \kappa}{\ell_i\log d} \geq 1-\kappa.
\end{equation}
Also, assuming $d$ is large enough, we have 
\begin{equation}\label{eq:keep_upper}
    \keep_i \leq \exp\left(-\dfrac{\kappa\, d_i}{\ell_i\log d}\right) \leq \exp\left(-\dfrac{\kappa}{8\log d}\right) \leq 1 - \frac{\kappa}{10\log d}.
\end{equation}
It follows from \eqref{eq:keep_lower} that
$$\keep_i\, \uncolor_i = \keep_i - \frac{\kappa}{\log d}\, \keep_i^2 \geq 1 - \kappa \left( 1 + \frac{\keep_i^2}{\log d}\right) \geq 1 - 2\kappa.$$
Since $\kappa < 1/4$, the function $f(x) = x^{1-\beta/2} - x$ is decreasing on $[1-2\kappa,1]$. It follows from \eqref{eq:keep_upper} that
\begin{equation}\label{eq:1-omega/21}
    \keep_i^{1-\beta/2} - \keep_i \geq \left(1 - (1-\beta/2)\,\frac{\kappa}{10\log d}\right) - \left(1 - \frac{\kappa}{10\log d}\right) = \frac{\beta \kappa}{20\log d}.
\end{equation}
Also, we can write
\begin{equation}\label{eq:1-omega/22}
    (\keep_i\, \uncolor_i)^{1-\beta/2} - \keep_i\, \uncolor_i \geq \keep_i^{1-\beta/2} - \keep_i \geq \frac{\beta \kappa}{20\log d}.
\end{equation}
Now we are ready to prove Lemma~\ref{boundHatDiff} by induction on $i$. 
Note that $\hat{\ell}_i \geq \ell_i$, $\hat{d}_i \leq d_i$. For the base case $i = 1$, the claim is trivial. Assume now that it holds for some $i$ and consider $i + 1$. We have
\begin{align*}
    \hat{\ell}_{i+1} &= \keep_i\, \hat{\ell}_i \\
    &\leq \keep_i\,(\ell_i + \hat{\ell}_i^{1 - \beta/2}) &\text{(by the inductive hypothesis)} \\
    &\leq \ell_{i+1} +  \ell_i^{1 - \beta} + \left(\keep_i^{1-\beta/2} - \frac{\beta \kappa}{20\log d}\right)\, \hat{\ell}_{i}^{1-\beta/2} &\text{(by \eqref{eq:1-omega/21})} \\
    &= \ell_{i+1} + \hat{\ell}_{i+1}^{1-\beta/2} + \ell_i^{1 - \beta} - \frac{\beta\kappa}{20\log d}\,\hat{\ell}_i^{1-\beta/2}.
\end{align*}
\noindent
It remains to argue that
$$\frac{\beta\kappa}{20\log d}\,\hat{\ell}_i^{1-\beta/2} \geq \ell_i^{1-\beta},$$
which is equivalent to
$$\frac{\ell_i^{1-\beta}}{\hat{\ell}_i^{1-\beta/2}} \leq \frac{\beta\kappa}{20\log d}.$$
To this end, we write
$$\frac{\ell_i^{1-\beta}}{\hat{\ell}_i^{1-\beta/2}} \leq \frac{\ell_i^{1-\beta}}{\ell_i^{1-\beta/2}} = \ell_i^{-\beta/2} \leq \frac{1}{5\log^2d} < \frac{\beta\kappa}{20\log d},$$
since $\beta = \Omega(\log \log d/\log d)$. Thus, the claim holds for $d$ large enough.

The argument for $\hat{d}_{i+1}$ is almost identical. We have
\begin{align*}
    \hat{d}_{i+1} &= \keep_i\, \uncolor_i\, \hat{d}_i \\
    &\geq \keep_i\, \uncolor_i\,(d_i - \hat{d}_i^{1 - \beta/2}) &\text{(by the inductive hypothesis)}\\
    &\geq d_{i+1} - d_i^{1-\beta } - \left((\keep_i\,\uncolor_i)^{1-\beta /2} - \frac{\beta\kappa}{20\log d}\right)\,\hat{d}_i^{1-\beta/2} &\text{(by \eqref{eq:1-omega/22})} \\
    &= d_{i+1} - \hat{d}_{i+1}^{1-\beta/2} - d_i^{1-\beta} + \frac{\beta\kappa}{20\log d}\,\hat{d}_i^{1-\beta/2}.
\end{align*}
\noindent
It remains to argue that
$$\frac{\beta\kappa}{20\log d}\,\hat{d}_i^{1-\beta/2} \geq d_i^{1-\beta},$$
which is equivalent to
$$\frac{d_i^{1-\beta}}{\hat{d}_i^{1-\beta/2}} \leq \frac{\beta\kappa}{20\log d}.$$
To this end, we write
$$\frac{d_i^{1-\beta}}{\hat{d}_i^{1-\beta/2}} \leq \frac{d_i^{-\beta}}{\hat{d}_i^{1-\beta/2}}\,\left(\hat{d}_i + \hat{d}_i^{1-\beta/2}\right) = d_i^{-\beta}(\hat{d}_i^{\beta /2}+1) \leq 2d_i^{-\beta /2} \leq \frac{1}{2\log^2d} < \frac{\beta\kappa}{20\log d},$$
and so, the claim holds for $d$ large enough.
\end{proof}


Next we show that $\ell_i$ never gets too small:

\begin{Lemma}\label{lemma:l_lower}
    Suppose that for all $j < i$, we have $\ell_j \leq 8d_j$. Then $\ell_i \geq d^{\eps/15}$.
\end{Lemma}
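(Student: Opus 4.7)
The plan is to reduce the problem to the clean multiplicative sequences $\hat \ell_j$, $\hat d_j$ from Lemma~\ref{boundHatDiff}, which satisfy the exact recurrences $\hat \ell_{j+1} = \keep_j\, \hat \ell_j$ and $\hat d_{j+1} = \keep_j\, \uncolor_j\, \hat d_j$. Applying Lemma~\ref{boundHatDiff} (whose technical preconditions $\ell_j^\beta, d_j^\beta \geq 30\log^4 d$ will be verified by induction, using the much stronger bound $\ell_j \geq d^{\eps/10}$ that will fall out of the argument) gives $\ell_i \geq \hat\ell_i - \hat\ell_i^{1-\beta/2}$, so it suffices to show $\hat\ell_i \geq 2 d^{\eps/15}$. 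Equivalently, the goal is to lower bound $\log(\hat\ell_i/\ell_1) = \sum_{j<i}\log \keep_j$ by something close to $-\log d$.

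The central computation is a telescoping identity for the ratios $\hat r_j \defeq \hat d_j/\hat \ell_j$. Since $\hat r_{j+1} = \hat r_j\, \uncolor_j = \hat r_j(1 - \kappa \keep_j/\log d)$, summing $\hat r_j - \hat r_{j+1} = \hat r_j\, \kappa\keep_j/\log d$ over $j < i$ gives
\[
    \sum_{j<i} \hat r_j\, \keep_j \,=\, \frac{\log d}{\kappa}(\hat r_1 - \hat r_i) \,\leq\, \frac{\log^2 d}{(1+\eps)\,\kappa}.
\]
By Lemma~\ref{decreasingSeq}, $\hat r_j \leq \hat r_1 = \log d/(1+\eps)$, so Bernoulli's inequality yields $\keep_j \geq 1 - \kappa \hat r_j/\log d \geq 1 - \kappa/(1+\eps) =: K$; and the chosen $\kappa = (1+\eps/2)\log(1+\eps/100) \leq 3\eps/200$ forces $K \geq 1 - \eps/50$. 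Dividing the telescoped identity by $K$ gives $(\kappa/\log d)\sum_{j<i}\hat r_j \,\leq\, \log d/((1+\eps) - \kappa)$.

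Finally, combining with the Taylor bound $\log \keep_j = d_j\log(1 - \kappa/(\ell_j\log d)) \geq -(\kappa r_j/\log d)(1+o(1))$ (where $r_j = \hat r_j(1+o(1))$ by Lemma~\ref{boundHatDiff}) gives
\[
    \log(\hat\ell_i/\ell_1) \,\geq\, -(1+o(1))\,\frac{\log d}{(1+\eps)-\kappa} \,\geq\, -(1-\eps/3)\log d,
\]
the final step using $(1+\eps)-\kappa \geq 1 + 197\eps/200 \geq 1 + \eps/2$ for small $\eps$. Hence $\hat\ell_i \geq \ell_1\cdot d^{\eps/3-1} = (1+\eps)\, d^{\eps/3}/\log d \geq d^{\eps/5}$ for $d$ large, from which $\ell_i \geq d^{\eps/10} \geq d^{\eps/15}$ follows. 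The main obstacle is the delicate accounting: the positive exponent on $d$ in the final bound exists only because $\kappa$ was carefully calibrated to be strictly less than $\eps$, so the gap $(1+\eps)-\kappa$ is a positive constant multiple of $\eps$, and all the $1+o(1)$ error factors (from $r_j$ vs.\ $\hat r_j$, from the Taylor remainders, and from the Bernoulli estimate) must be small enough not to consume this gap. A secondary bookkeeping issue is verifying the inductive hypothesis so that Lemmas~\ref{decreasingSeq} and~\ref{boundHatDiff} apply at every prior step, which is where the restriction on $\alpha$ in Theorem~\ref{mainTheorem} enters.
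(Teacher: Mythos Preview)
Your proposal is correct and follows a genuinely different route from the paper's proof. Both arguments reduce to bounding $\hat\ell_i = \ell_1\prod_{j<i}\keep_j$ from below, invoke Lemmas~\ref{decreasingSeq} and~\ref{boundHatDiff} (with the inductive verification of their hypotheses handled the same way), and then transfer the bound back to $\ell_i$. The difference lies in how $\sum_{j<i}(-\log\keep_j)$ is controlled.

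The paper proceeds by a two-pass refinement: first it uses the crude bound $\keep_j \geq \exp(-\kappa/(1+\eps/2))$ to show that $\hat r_j$ decays geometrically, then feeds this decay back in to obtain a sharper pointwise lower bound on $\keep_j$, and finally sums the resulting exponents as an infinite geometric series. The specific choice $\kappa = (1+\eps/2)\log(1+\eps/100)$ is exploited to make the final exponent $\exp(\kappa/(1+\eps/2))/(1+\eps/8) = (1+\eps/100)/(1+\eps/8)$ come out visibly below $1$.

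Your argument avoids this refinement by using the exact telescoping identity $\sum_{j<i}\hat r_j\,\keep_j = (\log d/\kappa)(\hat r_1 - \hat r_i)$, then dividing by the uniform bound $\keep_j \geq K = 1 - \kappa/(1+\eps)$ to control $\sum_{j<i}\hat r_j$ directly. This is cleaner and more robust: it only needs that $\kappa$ is small relative to $\eps$ (so that $(1+\eps) - \kappa > 1$ with room to spare), not the precise calibrated value. The price is a slightly cruder constant, which is irrelevant here.

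Two small points to tidy up in a full write-up: the monotonicity $\hat r_j \leq \hat r_1$ follows immediately from $\hat r_{j+1} = \uncolor_j\,\hat r_j$ rather than from Lemma~\ref{decreasingSeq} (which concerns $r_j$); and Bernoulli's inequality gives $\keep_j \geq 1 - \kappa r_j/\log d$ (with $r_j$, not $\hat r_j$), after which Lemma~\ref{decreasingSeq} applied to $r_j$ yields $\keep_j \geq K$. Neither affects the validity of the argument.
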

\begin{proof}
    For brevity, set $r_i \defeq d_i/\ell_i$ and $\hat{r}_i \defeq \hat{d}_i/\hat{\ell}_i$. The proof is by induction on $i$. The base case $i = 1$ is clear. Now we assume that the desired bound holds for $\ell_1$, \ldots, $\ell_i$ and consider $\ell_{i+1}$. 
    Assuming $d$ is large enough, we have
\begin{equation}\label{eq:bound1}
    1 - \frac{\kappa}{\ell_i\log d} \geq \exp{\left(-\frac{\kappa}{(1-\eps/4)\ell_i\log d}\right)}.
\end{equation}
Note that $r_1 = \hat{r}_1 = \log d/(1+\eps)$ and, assuming $\eps < 1/100$, $(1-\eps/4)(1+\eps) \geq (1 +\eps/2)$. Hence,
\begin{align*}
    \keep_i &= \left(1 - \frac{\kappa}{\ell_i \log d}\right)^{d_i} \\
    &\geq \exp{\left(-\frac{\kappa}{(1-\eps/4)\log d}\, r_i\right)} &\text{(by \eqref{eq:bound1})}\\
    &\geq \exp{\left(-\frac{\kappa}{(1-\eps/4)\log d}\, r_1\right)} &\text{(by Lemma \ref{decreasingSeq})}\\
    &\geq \exp{\left(-\frac{\kappa}{(1+\eps/2)}\right)}. 
\end{align*}
\noindent
With this bound on $\keep_i$, we can bound $\hat{r}_i$ as follows:
\begin{align*}
    \hat{r}_i &= \hat{r}_1\prod\limits_{j < i}\uncolor_j \\
    &= \hat{r}_1\prod\limits_{j < i}\left(1 - \frac{\kappa}{\log d}\, \keep_j \right) \\
    &\leq \frac{\log d}{1+\eps}\left(1 - \frac{\kappa}{\log d}\, \exp{\left(-\frac{\kappa}{(1+\eps/2)}\right)}\right)^{i-1}.
\end{align*}
\noindent
Applying Lemma \ref{boundHatDiff}, we get a bound on $r_i$ for $d$ large enough in terms of $\eps$: 
\begin{align*}
    r_i &\leq \hat{r}_i\left(\frac{1 + \hat{d}_i^{-\beta/2}}{1 - \hat{\ell}_i^{-\beta/2}}\right) \\
    &\leq \hat{r}_i(1 + \hat{\ell}_i^{-\beta/2} + \hat{d}_i^{-\beta/2}) \\
    &\leq \hat{r}_i\left(1 + O(d^{-\eps\beta/30})\right)\\
    &< \frac{\log d}{1+\eps/2}\left(1 - \frac{\kappa}{\log d}\, \exp{\left(-\frac{\kappa}{(1+\eps/2)}\right)}\right)^{i-1}.
\end{align*}
Note that for $\eps$ small enough, $(1-\eps/4)(1+\eps/2) \geq 1+\eps/8$. Applying this and the above bound on $r_i$, we can get a better bound on $\keep_i$:
\begin{align*}
    \keep_i &\geq \exp{\left(-\frac{\kappa}{(1-\eps/4)\log d}\, r_i\right)} \\
    &\geq \exp{\left(-\frac{\kappa}{(1-\eps/4)\log d}\, \frac{\log d}{1+\eps/2}\left(1 - \frac{\kappa}{\log d}\, \exp{\left(-\frac{\kappa}{(1+\eps/2)}\right)}\right)^{i-1}\right)} \\
    &\geq \exp{\left(-\frac{\kappa}{(1+\eps/8)}\left(1 - \frac{\kappa}{\log d}\, \exp{\left(-\frac{\kappa}{(1+\eps/2)}\right)}\right)^{i-1}\right)}.
\end{align*}
With this bound on $\keep_i$, we can get a lower bound on $\hat{\ell}_{i+1}$ as follows:
\begin{align*}
    \hat{\ell}_{i+1} &= \hat{\ell}_1\,\prod\limits_{j \leq i}\keep_j \\
    &\geq \hat{\ell}_1\,\prod\limits_{j \leq i}\exp{\left(-\frac{\kappa}{(1+\eps/8)}\left(1 - \frac{\kappa}{\log d}\, \exp{\left(-\frac{\kappa}{(1+\eps/2)}\right)}\right)^{j-1}\right)} \\
    &= (1+\eps)\,\frac{d}{\log d}\, \exp{\left(-\frac{\kappa}{(1+\eps/8)}\sum\limits_{j \leq i}\left(1 - \frac{\kappa}{\log d}\, \exp{\left(-\frac{\kappa}{(1+\eps/2)}\right)}\right)^{j-1}\right)} \\
    &\geq (1+\eps)\,\frac{d}{\log d}\, \exp{\left(-\frac{\kappa}{(1+\eps/8)}\sum\limits_{j =1}^\infty\left(1 - \frac{\kappa}{\log d}\, \exp{\left(-\frac{\kappa}{(1+\eps/2)}\right)}\right)^{j-1}\right)} \\
    &= (1+\eps)\,\frac{d}{\log d}\, \exp{\left(-\frac{\log d}{(1+\eps/8)}\, \exp{\left(\frac{\kappa}{(1+\eps/2)}\right)}\right)} \\
    &= (1+\eps)\,\frac{d}{\log d}\, d^{\left(-\dfrac{\exp{\left(\kappa/(1+\eps/2)\right)}}{(1+\eps/8)}\right)}.
\end{align*}
Recalling that $\kappa = (1+\eps/2)\log (1+\eps/100)$, we get
$$\frac{\exp{\left(\kappa/(1+\eps/2)\right)}}{(1+\eps/8)} = \frac{1+\eps/100}{1+\eps/8} < 1-\eps/9.$$
Therefore, for $d$ large enough, we get
$$\hat{\ell}_{i+1} > (1+\eps)\,\frac{d}{\log d}\, d^{\eps/9-1} > d^{\eps/10}.$$
Applying Lemma \ref{boundHatDiff}, we finally get the bound we desire:
\[
    \ell_{i+1} \geq \hat{\ell}_{i+1} - \hat{\ell}_{i+1}^{1-\beta/2} \geq d^{\eps/10}(1 - \hat{\ell}_{i+1}^{-\beta/2}) \geq d^{\eps/15}. \qedhere
\]
\end{proof}

We can now finally establish the existence of the desired bound $i^\star$:

\begin{Lemma}\label{boundListSize}
There exists an integer $i^\star \geq 1$ such that $\ell_{i^\star} \geq 8d_{i^\star}$.
\end{Lemma}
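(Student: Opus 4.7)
My plan is to argue by contradiction: suppose $\ell_j < 8 d_j$ for every $j \geq 1$. Then, inductively, the preconditions of Lemmas \ref{decreasingSeq}, \ref{boundHatDiff}, and \ref{lemma:l_lower} are met at every stage, since the hypothesis ``$\ell_k \leq 8 d_k$ for $k < j$'' is built into the standing assumption. Lemma \ref{lemma:l_lower} then yields $\ell_j \geq d^{\eps/15}$, and combining this with $d_j > \ell_j/8$ shows that both $\ell_j^\beta$ and $d_j^\beta$ comfortably exceed $30 \log^4 d$ once $d$ is sufficiently large, recalling that $\beta = 1/(25t) = \Omega(\log\log d/\log d)$ and that $\tilde{\alpha}$ is small relative to $\eps$.

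Next I would reuse the geometric decay bound produced inside the proof of Lemma \ref{lemma:l_lower}, namely
\[
    \frac{d_j}{\ell_j} \;<\; \frac{\log d}{1+\eps/2}\, \rho^{j-1}, \qquad \rho \defeq 1 - \frac{\kappa}{\log d}\, \exp\!\left(-\frac{\kappa}{1+\eps/2}\right).
\]
Since $\kappa = \Theta(\eps)$ and therefore $1 - \rho = \Theta(\eps/\log d)$, a direct computation shows that taking $j$ of order $\log d \cdot \log\log d$ (with a sufficiently large multiplicative constant depending only on $\eps$) forces $\rho^{j-1} \leq (1+\eps/2)/(16 \log d)$. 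Consequently $d_j/\ell_j < 1/16 < 1/8$, i.e., $\ell_j > 8 d_j$, contradicting the assumption. Therefore some $i^\star$ with $\ell_{i^\star} \geq 8 d_{i^\star}$ must exist.

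The argument is essentially bookkeeping on top of Lemmas \ref{decreasingSeq}--\ref{lemma:l_lower}; no new probabilistic or combinatorial ingredient is needed. The only step requiring care is the verification that the $\beta$-exponent preconditions $\ell_j^\beta, d_j^\beta \geq 30 \log^4 d$ persist over the entire $\Theta(\log d \log\log d)$-iteration trajectory, but this follows from Lemma \ref{lemma:l_lower}'s lower bound $\ell_j \geq d^{\eps/15}$ together with the hypothesis $t \leq \tilde{\alpha}\log d/\log\log d$ with $\tilde{\alpha}$ chosen small enough relative to $\eps$. So the main obstacle is largely notational rather than conceptual.
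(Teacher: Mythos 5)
Your argument is correct and follows essentially the same route as the paper: argue by contradiction, use Lemma~\ref{lemma:l_lower} to keep $\ell_j$ (hence the $\beta$-exponent preconditions) under control, derive a geometric decay of $d_j/\ell_j$ at rate $1 - \Theta(\kappa/\log d)$ per step, and conclude that after $\Theta(\kappa^{-1}\log d\log\log d)$ iterations the ratio falls below $1/8$. The only cosmetic difference is that you reuse the sharper decay factor $\rho$ already computed inside Lemma~\ref{lemma:l_lower}'s proof, whereas the paper takes the cruder shortcut $\keep_j \geq 1/2$ to write $r_i \leq 2\log d\,(1-\kappa/(2\log d))^i$; both yield the same order of magnitude for $i^\star$.
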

\begin{proof}
%
%
As in the proof of Lemma~\ref{lemma:l_lower}, set $r_i \defeq d_i/\ell_i$ and $\hat{r}_i \defeq \hat{d}_i/\hat{\ell}_i$. Suppose, toward a contradiction, that $\ell_i < 8d_i$ \ep{i.e., $r_i > 1/8$} for all $i \geq 1$. By Lemma~\ref{lemma:l_lower}, this implies that $\ell_i \geq d^{\eps/15}$ for all $i$. Note that $\hat{r}_i = \uncolor_i\, \hat{r}_{i-1}$ is a decreasing sequence. Furthermore, $\keep_j \geq \keep_1 \geq 1-\frac{\kappa}{1+\eps} \geq 1/2$. Thus,
\begin{align*}
    r_{i} &\leq 2\hat{r}_{i} \\
    &\leq 2\hat{r}_1\prod\limits_{j < i}\left(1 - \frac{\kappa}{\log d}\, \keep_j\right) \\
    &\leq 2\hat{r}_1\left(1 - \frac{\kappa}{2\log d}\right)^i \\
    & \leq 2\log d\, \exp{\left(-\frac{\kappa}{2\log d}\, i\right)}.
\end{align*}
For $i \geq \frac{10}{\kappa}\,\log d\log\log d$, the last expression is less than $1/8$; a contradiction.
\end{proof}

Let $i^\star \geq 1$ be the smallest integer such that $\ell_{i^\star} \geq 8d_{i^\star}$ (which exists by Lemma~\ref{boundListSize}). Take any $i < i^\star$. We need to verify conditions \ref{item:1}--\ref{item:5}. Note that Lemma~\ref{lemma:l_lower} yields
\begin{equation}\label{eq:landd}
    \ell_{i} \geq d^{\eps/15} \quad \text{and} \quad d_{i} \geq \frac{\ell_{i}}{8} \geq \frac{d^{\eps/15}}{8} \geq d^{\eps/20}.
\end{equation}
Therefore, condition \ref{item:1} holds assuming that $d_0 > \tilde{d}^{20/\eps}$. For \ref{item:2}, we use Lemma \ref{decreasingSeq} to write
\[
    \frac{\ell_i}{d_i} \geq \frac{\ell_1}{d_1} \geq \frac{1}{\log d} \geq \eta.
\]
Next, due to \eqref{eq:landd}, we can take $\alpha$ so small that
\[
    s \leq d^{\alpha\eps} \leq d_i^\frac{1}{4} \quad \text{and} \quad t \leq \frac{\alpha\eps\log d}{\log \log d} \leq \frac{\tilde{\alpha} \log d_i}{\log \log d_i},
\]
which yields conditions \ref{item:3} and \ref{item:4}. Finally, it follows for $d$ large enough that
\[
    \frac{1}{\log^5d_i} \leq \frac{1}{(\eps/20)^5\log^5 d} \leq \eta \leq \frac{1}{\log d} \leq \frac{1}{\log d_i},
\]
so \ref{item:5} holds as well. As discussed earlier, we can now iteratively apply Lemma~\ref{iterationTheorem} $i^\star - 1$ times and then complete the coloring using Proposition~\ref{finalBlow}. This completes the proof of Theorem~\ref{mainTheorem}. 



    \subsection*{Acknowledgements}
        We are grateful to the anonymous referee for helpful suggestions.

\printbibliography

@ARTICLE{JMTheorem,
	AUTHOR = "Bernshteyn, A.",
	TITLE = "{The Johansson-Molloy theorem for DP-coloring}",
	JOURNAL = "Rand. Struct. Algor.",
	YEAR = "2019",
	volume = {54},
	pages = {653--664},
}

@BOOK{MolloyReed,
    AUTHOR = "Molloy, M. and Reed, B.",
    TITLE = "{Graph Colourings and the Probabilistic Method}",
    PUBLISHER = "Springer",
    YEAR = "2002",
}

@ARTICLE{GraphEmbedding,
	AUTHOR = "Bonamy, M. and Perrett, T. and Postle, L.",
	TITLE = "{Colouring Graphs with Sparse Neighbourhoods: Bounds and Applications}",
	archivePrefix = "arXiv",
	note = {arXiv:1810.06704},
	YEAR = "2018",
	MONTH = "oct",
}

@ARTICLE{DPCol,
	AUTHOR = "Dv\v{o}r\'ak, Z. and Postle, L.",
	TITLE = "{Correspondence coloring and its application to list-coloring planar graphs without cycles of lengths 4 to 8}",
	JOURNAL = "J. Combin. Theory",
	series = {B},
% 	archivePrefix = "arXiv",
% 	note = {arXiv:1508.03437},
	YEAR = "2018",
	volume = {129},
	pages = {38--54},
	%MONTH = "mar",
}

@unpublished{Pallette,
	AUTHOR = "Alon, N. and Assadi, S.",
	TITLE = "{Palette sparsification beyond $(\Delta +1)$ vertex coloring}",
	howpublished = {\url{https://arxiv.org/abs/2006.10456} (preprint)},
	date = "2020",
	%MONTH = "jul",
}

@ARTICLE{GirthRegular1,
	AUTHOR = "Imrich, W.",
	TITLE = "{Explicit construction of regular graphs without small cycles}",
    JOURNAL = "Combinatorica",
	YEAR = "1984",
	volume = {4},
	pages = {53--59},
}

@ARTICLE{GirthRegular2,
	AUTHOR = "Margulis, G. A.",
	TITLE = "{Explicit constructions of graphs without short cycles and low density codes}",
    JOURNAL = "Combinatorica",
	YEAR = "1982",
	volume = {2},
	pages = {71--78},
}

@ARTICLE{KST,
	AUTHOR = "K\H{o}v\`{a}ri, T. and S\'{o}s, V.T. and Tur\'{a}n, P.",
	TITLE = "{On a problem of K. Zarankiewicz}",
	JOURNAL = "Colloquium Mathematicum",
	YEAR = "1954",
	pages = {50--57},
	volume = {3},
}

@ARTICLE{KST2,
	AUTHOR = "Hylt\'en-Cavallius, C.",
	TITLE = "{On a combinatorial problem}",
	JOURNAL = "Colloquium Mathematicum",
	YEAR = "1958",
	pages = {61--65},
	volume = {6},
}

@ARTICLE{ExceptionalTal,
	AUTHOR = "Bruhn, H. and Joos, F.",
	TITLE = "{A stronger bound for the strong chromatic index}",
% 	archivePrefix = "arXiv",
% 	note = {arXiv:1504.02583},
    JOURNAL = "Combin. Probab. Comput.",
	YEAR = "2018",
% 	MONTH = "jan",
    pages = {21--43},
	volume = {27},
}

@ARTICLE{AKSConjecture,
	AUTHOR = "Alon, N. and Krivelevich, M. and Sudakov, B.",
	TITLE = "{Coloring graphs with sparse neighborhoods}",
	JOURNAL = "J. Combin. Theory",
	series = {B},
	YEAR = "1999",
	volume = {77},
	pages = {73--82},
}

@unpublished{Joh_sparse,
    author = {A. Johansson},
    title = {The choice number of sparse graphs},
	howpublished = {\url{https://www.cs.cmu.edu/~anupamg/down/johansson-choice-number-of-sparse-graphs-coloring-kr-free.pdf} (preprint)},
	date = {1996},
}

@article{Kim95,
	author = {J.H. Kim},
	title = {On Brooks' Theorem for sparse graphs},
	journaltitle = {Combin. Probab. Comput.},
	date = {1995},
	volume = {4},
	pages = {97--132},
}

@report{Joh_triangle,
	author = {A. Johansson},
	title = {Asymptotic choice number for triangle free graphs},
	type = {Technical Report 91--95},
	institution = {DIMACS},
	date = {1996},
}

@article{PS15,
	author = {S. Pettie and H.-H. Su},
	title = {Distributed coloring algorithms for triangle-free graphs},
	journaltitle = {Information and Computation},
	date = {2015},
	volume = {243},
	pages = {263--280},
}

@article{Molloy,
	author = {M. Molloy},
	title = {The list chromatic number of graphs with small clique number},
	journaltitle = {J. Combin. Theory},
	series = {B},
	volume = {134},
	pages = {264--284},
	date = {2019},
}

@unpublished{DKPS,
    author = {E. Davies and R.J. Kang and F. Pirot and J.-S. Sereni},
    title = {Graph structure via local occupancy},
	howpublished = {\url{https://arxiv.org/abs/2003.14361} (preprint)},
	date = {2020},
}

@unpublished{CK,
    author = {S. Cambie and R.J. Kang},
    title = {Independent transversals in bipartite correspondence-covers},
	howpublished = {\url{https://arxiv.org/abs/2009.05428} (preprint)},
	date = {2020},
}

@article{AminiReed,
	author = {O. Amini and B. Reed},
	title = {List colouring constants of triangle free graphs},
	journaltitle = {Electron. Notes Discret. Math.},
	volume = {30},
	pages = {135--140},
	date = {2008},
}

@unpublished{Nibble,
    author = {D.Y. Kang and T. Kelly and D. K{\"{u}}hn and A. Methuku and D. Osthus},
    title = {Graph and hypergraph colouring via nibble methods: A survey},
	howpublished = {\url{https://arxiv.org/pdf/2106.13733} (preprint)},
	date = {2021},
}

@unpublished{nbhd,
	author = {J. Anderson and A. Bernshteyn and A. Dhawan},
	title = {Coloring graphs with forbidden almost bipartite subgraphs},
	%date = {2021},
	howpublished = {in preparation},
}

@article{BollobasIndependence,
	author = {B. Bollob{\'{a}}s},
	title = {The independence ratio of regular graphs},
	journaltitle = {Proc. Amer. Math. Soc.},
	date = {1981},
	volume = {83},
	number = {2},
	pages = {433--436},
}

@article{Achlioptas,
	author = {D. Achlioptas and A. Coja-Oghlan},
	title = {Algorithmic barriers from phase transitions},
	journaltitle = {IEEE Symposium on Foundations of Computer Science (FOCS)},
	date = {2008},
	pages = {793--802},
	addendum = {Full version: \url{https://arxiv.org/abs/0803.2122}},
}

@article{Zdeborova,
	author = {L. Zdeborov{\'{a}} and F. Krz{\k{a}}ka{\l}a},
	title = {Phase transitions in the coloring of random graphs},
	journaltitle = {Phys. Rev. E},
	date = {2007},
	volume = {76},
	pages = {031131},
}

@article{RV,
	author = {M. Rahman and B. Vir{\'{a}}g},
	title = {Local algorithms for independent sets are half-optimal},
	journaltitle = {Ann. Probab.},
	volume = {45},
	number = {3},
	pages = {1543--1577},
	date = {2017},
}
\end{document}